\theoremstyle{plain}
\newtheorem{thm}{Theorem}[section]
\newtheorem{prop}[thm]{Proposition}
\newtheorem{cor}[thm]{Corollary}
\newtheorem{lemma}[thm]{Lemma}
\newtheorem{conj}[thm]{Conjecture}
\theoremstyle{remark}
\theoremstyle{definition}
\newtheorem{remark}[thm]{Remark}
\newtheorem{example}[thm]{Example}
\newtheorem{defn}[thm]{Definition}
\newcommand{\rank}{\operatorname{rank}}
\newcommand{\ot}{\leftarrow}
\newcommand{\Gr}{\operatorname{Gr}}
\newcommand{\Fl}{\operatorname{Fl}}
\newcommand{\pt}{\text{point}}
\newcommand{\bP}{{\mathbb P}}
\newcommand{\bG}{{\mathbb G}}
\newcommand{\bT}{{\mathbb T}}
\newcommand{\bk}{{\mathbb K}}
\newcommand{\N}{{\mathbb N}}
\newcommand{\Z}{{\mathbb Z}}
\newcommand{\Q}{{\mathbb Q}}
\newcommand{\cO}{{\mathcal O}}
\newcommand{\cA}{{\mathcal A}}
\newcommand{\cB}{{\mathcal B}}
\newcommand{\cV}{{\mathcal V}}
\newcommand{\cE}{{\mathcal E}}
\newcommand{\cF}{{\mathcal F}}
\newcommand{\cI}{{\mathcal I}}
\newcommand{\calL}{{\mathcal L}}
\newcommand{\cM}{{\mathcal M}}
\newcommand{\cQ}{{\mathcal Q}}
\newcommand{\cS}{{\mathcal S}}
\newcommand{\fm}{{\mathfrak m}}
\newcommand{\groth}{{\mathcal G}}
\newcommand{\bull}{{\scriptscriptstyle \bullet}}
\newcommand{\bOmega}{\boldsymbol{\Omega}}
\DeclareMathOperator{\Hom}{Hom} 
\DeclareMathOperator{\Tor}{Tor}
\DeclareMathOperator{\Spec}{Spec}
\DeclareMathOperator{\Sym}{Sym}
 \DeclareMathOperator{\codim}{codim}
\DeclareMathOperator{\GL}{GL} 
 \DeclareMathOperator{\Rep}{{\mathcal R}}
\DeclareMathOperator{\ch}{ch}
\newcommand{\id}{\operatorname{id}}
\newcommand{\wt}{\widetilde}
\newcommand{\wb}{\overline}
\newcommand{\bi}{{\mathbf i}}
\newcommand{\br}{{\mathbf r}}
\newcommand{\euler}[1]{\langle #1 \rangle}
\newcommand{\hmm}[1]{\mbox{}\hspace{#1mm}\mbox{}}
\newcommand{\vmm}[1]{\mbox{}\vspace{#1mm}\mbox{}}
\newenvironment{romenum}{\begin{enumerate}}{\end{enumerate}}
\newcommand{\toto}{\text{\raisebox{3pt}{$\ \displaystyle
      \mathop{\to}_{\displaystyle \to}\ $}}}
\newcommand{\pic}[2]{\includegraphics[scale=#1]{#2}}
\newcommand{\ignore}[1]{}
\begin{document}

\title{Quiver coefficients of Dynkin type}
\author{Anders~Skovsted~Buch}
\address{Department of Mathematics, Rutgers University, 110
Frelinghuysen Road, Piscataway, NJ 08854, USA}
\email{asbuch@math.rutgers.edu}
\thanks{The author was partially supported by NSF Grant DMS-0603822}
\date{March 17, 2007; Revised August 21, 2007}
\maketitle

\section{Introduction}

Let $Q = (Q_0,Q_1)$ be a quiver, consisting of a finite set of
vertices $Q_0$ and a finite set of arrows $Q_1$.  Each arrow $a \in
Q_1$ has a head $h(a)$ and a tail $t(a)$ in $Q_0$.  For convenience we
will assume that the vertex set is an integer interval, $Q_0 =
\{1,2,\dots,n\}$.  Let $e = (e_1,\dots,e_n) \in \N^n$ be a dimension
vector, and fix vector spaces $E_i = \bk^{e_i}$ for $i \in Q_0$ over a
field $\bk$.  The representations of $Q$ on these vector spaces form
the affine space $V = \bigoplus_{a \in Q_1} \Hom(E_{t(a)}, E_{h(a)})$,
which has a natural action of the group $\bG = \GL(E_1) \times \dots
\times \GL(E_n)$ given by $(g_1,\dots,g_n) . (\phi_a)_{a \in Q_1} =
(g_{h(a)}\, \phi_a\, g_{t(a)}^{-1})_{a \in Q_1}$.

Define a {\em quiver cycle\/} to be any $\bG$-stable closed
irreducible subvariety $\Omega$ in $V$.  A quiver cycle determines an
equivariant (Chow) cohomology class $[\Omega] \in H^*_\bG(V)$ and an
equivariant Grothendieck class $[\cO_\Omega] \in K_\bG(V)$.  These
classes are well understood when the quiver $Q$ is {\em
  equioriented\/} of type A, that is, a sequence $\{1 \to 2 \to \dots
\to n\}$ of arrows in the same direction.  In this case, a formula for
the cohomology class $[\Omega]$ was given in joint work with Fulton
\cite{buch.fulton:chern}, and this formula was generalized to
$K$-theory in \cite{buch:grothendieck}.  The $K$-theory formula states
that the Grothendieck class $[\cO_\Omega]$ is given by
\[ [\cO_\Omega] = \sum_\mu c_\mu(\Omega)\,
   \groth_{\mu_1}(E_2-E_1) \groth_{\mu_2}(E_3-E_2) \cdots
   \groth_{\mu_{n-1}}(E_n-E_{n-1}) \, \in K_\bG(V)
\]
where the sum is over finitely many sequences $\mu =
(\mu_1,\dots,\mu_{n-1})$ of partitions $\mu_i$.  Each factor
$\groth_{\mu_i}(E_{i+1} - E_i)$ is obtained by applying the stable
Grothendieck polynomial for $\mu_i$ to the standard representations of
$\bG$ on $E_{i+1}$ and $E_i$.  This notation will be explained in
section \ref{S:quivcoef}.

The coefficients $c_\mu(\Omega)$ are interesting geometric and
combinatorial invariants called (equioriented) {\em quiver
  coefficients}.  They are integers and are non-zero only when the sum
$\sum |\mu_i|$ of the weights of the partitions is greater than or
equal to the codimension of $\Omega$.  The coefficients for which this
sum equals $\codim(\Omega)$ describe the cohomology class of $\Omega$
and are called {\em cohomological quiver coefficients}.  It was proved
in \cite{knutson.miller.ea:four} that cohomological quiver
coefficients are non-negative, and in \cite{buch:alternating,
  miller:alternating} that the more general $K$-theoretic quiver
coefficients have {\em alternating signs}, in the sense that
$(-1)^{\sum |\mu_i| - \codim(\Omega)} c_\mu(\Omega)$ is a non-negative
integer.  These properties had earlier been conjectured in
\cite{buch.fulton:chern, buch:grothendieck}, and special cases had
been proved in \cite{buch:on, buch.kresch.ea:schubert,
  buch.kresch.ea:grothendieck}.  The equioriented quiver coefficients
can furthermore be expressed in terms of counting factor sequences
\cite{buch.fulton:chern, buch:on, knutson.miller.ea:four,
  buch:alternating, buch.kresch.ea:stable}.  They are known to
generalize Littlewood-Richardson coefficients
\cite{buch.fulton:chern}, ($K$-theoretic) Stanley coefficients
\cite{buch:stanley, buch:grothendieck}, and the monomial coefficients
of Schubert and Grothendieck polynomials
\cite{buch.kresch.ea:schubert, buch.kresch.ea:grothendieck}.  The
equioriented quiver coefficients are themselves special cases of the
$K$-theoretic Schubert structure constants on flag manifolds
\cite{lenart.robinson.ea:grothendieck, buch:alternating,
  buch.sottile.ea:quiver}.

The purpose of this paper is to introduce and study a more general
notion of quiver coefficients, which can be defined for an arbitrary
quiver $Q$ without oriented loops.  For each vertex $i \in Q_0$, we
define $M_i = \bigoplus_{a : h(a)=i} E_{t(a)}$ to be the direct sum of
all vertex vector spaces at the tails of arrows pointing to $i$.  (If
there are two or more arrows to $i$ from a vertex $j$, then $E_j$ is
included multiple times as a summand of $M_i$.)  Given a quiver cycle
$\Omega \subset V$, we show that there are unique coefficients
$c_\mu(\Omega) \in \Z$, indexed by sequences $\mu =
(\mu_1,\dots,\mu_n)$ of partitions such that the length $\ell(\mu_i)$
is at most $e_i$, for which
\begin{equation} \label{E:quivcoef_intro}
  [\cO_\Omega] = \sum_\mu c_\mu(\Omega) \, 
  \groth_{\mu_1}(E_1-M_1) \groth_{\mu_2}(E_2-M_2) \cdots
  \groth_{\mu_n}(E_n-M_n) \,.
\end{equation}
As in the equioriented case, a coefficient $c_\mu(\Omega)$ can be
non-zero only if $\sum |\mu_i| \geq \codim(\Omega)$, and the lowest
degree coefficients describe the cohomology class $[\Omega]$.
However, the defining linear combination (\ref{E:quivcoef_intro})
might possibly be infinite, which makes sense modulo the gamma
filtration on $K_\bG(V)$.  We pose the following.

\begin{conj} \label{C:altsign}
  Let $Q$ be a quiver without oriented loops and $\Omega \subset V$ a
  quiver cycle.

(a) Only finitely many of the quiver coefficients $c_\mu(\Omega)$ for
$\Omega$ are non-zero.  In other words, the sum
(\ref{E:quivcoef_intro}) is finite.

(b) All cohomological quiver coefficients $c_\mu(\Omega)$, with
$\sum|\mu_i| = \codim(\Omega)$, are non-negative.

(c) If $\Omega$ has rational singularities, then the quiver
coefficients for $\Omega$ have alternating signs, i.e.\ 
$(-1)^{\sum|\mu_i| - \codim(\Omega)} c_\mu(\Omega) \geq 0$.
\end{conj}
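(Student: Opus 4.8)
The plan is to establish the conjecture for Dynkin quivers $Q$ (the case of a general quiver without oriented loops appears genuinely harder, and I expect it to remain open by this route), by reducing all three statements to known positivity and alternating-sign results for Schubert classes on partial flag varieties. The starting observation is that when $Q$ is Dynkin, Gabriel's theorem gives only finitely many $\bG$-orbits on $V$, so a quiver cycle $\Omega$ is always the closure of a single orbit, hence a ``Schubert-like'' variety; this rigidity is what makes the reduction below possible, and its failure is the first reason the general case is harder.

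\emph{Step 1: Schubert realization.} Following Zelevinsky (equioriented type $A$), Lakshmibai--Magyar (arbitrary type $A$) and Bobi\'nski--Zwara (types $A$ and $D$), there is a $\bG$-equivariant smooth morphism relating $V$ to a product $\Fl$ of partial flag varieties, under which $\Omega$ corresponds to an open subset of a Schubert variety $X_w \subset \Fl$ with $\codim_\Fl(X_w) = \codim_V(\Omega)$. I would set up this morphism so that, in equivariant $K$-theory, pullback carries $[\cO_{X_w}]$ to $[\cO_\Omega]$ and carries the Schubert basis of $K_\bG(\Fl)$ (represented, as usual, by products of double stable Grothendieck polynomials) onto the family $\{\groth_{\mu_1}(E_1-M_1)\cdots\groth_{\mu_n}(E_n-M_n)\}$ of (\ref{E:quivcoef_intro}), via the identification of the tautological bundles of $\Fl$ with the $E_i$ and $M_i$. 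This turns the defining expansion (\ref{E:quivcoef_intro}) into the expansion of $[\cO_{X_w}]$ in the Schubert basis, i.e.\ into a statement about $K$-theoretic Schubert structure constants, exactly as the equioriented case was handled in \cite{lenart.robinson.ea:grothendieck, buch:alternating, buch.sottile.ea:quiver}.

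\emph{Step 2: deducing (a) and (c).} Part (a) is then immediate: $[\cO_{X_w}]$ is a finite $\Z$-linear combination of Schubert classes, pullback is a filtered ring homomorphism shifting the gamma/codimension filtration by a fixed integer, and each Schubert class pulls back to a finite combination of the $\groth_{\mu_i}$-products; hence the sum in (\ref{E:quivcoef_intro}) is finite. For part (c), if $\Omega$ has rational singularities then so does $X_w$ (rational singularities descends and ascends along a smooth morphism), so Brion's theorem applies and $[\cO_{X_w}]$ expands in the Schubert basis with alternating signs $(-1)^{\codim(X_w)}$. Transporting this through the filtered pullback of Step 1 and matching degrees with the gamma filtration on $K_\bG(V)$ yields $(-1)^{\sum|\mu_i|-\codim(\Omega)} c_\mu(\Omega)\ge 0$. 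For part (b) one passes to the associated graded of the gamma filtration: the lowest-degree part of (\ref{E:quivcoef_intro}) is the expansion of $[\Omega] \in H^*_\bG(V)$ in the basis of products of Schur polynomials $s_{\mu_i}(E_i-M_i)$, and under Step 1 this corresponds to writing $[X_w]$ as a product of Schubert polynomials $\Schub_{w_i}$ in the appropriate Chern roots; non-negativity then follows from Graham--Kumar positivity of equivariant Schubert structure constants, or, in the spirit of \cite{buch.fulton:chern, knutson.miller.ea:four}, by presenting $\Omega$ as a degeneracy locus and invoking the component formula.

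\emph{Main obstacle.} The argument is hostage to Step 1: the Schubert realization of orbit closures is classical only for types $A$ and $D$, so the conjecture for arbitrary quivers without oriented loops is inaccessible this way, and for exceptional Dynkin types one would need a substitute (a suitable $\bG$-equivariant resolution with controlled $K$-theory might yield (a) and (c) but not obviously (b)). Within the Dynkin case the real work is in Step 1 itself --- checking that the pulled-back Schubert basis is \emph{exactly} the $\groth_{\mu_i}$-family of (\ref{E:quivcoef_intro}), and that the pullback shifts the gamma filtration on $K_\bG(V)$ by precisely the amount needed so that the sign in (c) and the leading term in (b) sit in degree $\codim(\Omega)$. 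Finally, quiver cycles are a priori neither normal nor Cohen--Macaulay, which is exactly why (c) must carry the rational-singularities hypothesis and why (b) is asserted only for the cohomological coefficients.
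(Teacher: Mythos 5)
This statement is a \emph{conjecture}: the paper does not prove it in general, and what it does prove is obtained by completely different means --- part (a) for all Dynkin quivers via Reineke's desingularization together with the Gysin formula of Theorem~\ref{T:gysin} (Section~\ref{S:formula}), and the full statement only for A$_3$ via the explicit set-valued-tableau formulas of Section~\ref{S:a3}. Your proposal claims the whole conjecture for all Dynkin types, which would be strictly stronger than anything the paper establishes; that alone should make you suspicious of Step 1.

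The genuine gap is the basis-matching claim in Step 1. The classes $\groth_{\mu_1}(E_1-M_1)\cdots\groth_{\mu_n}(E_n-M_n)$, with $M_i=\bigoplus_{h(a)=i}E_{t(a)}$ a direct sum of several vertex spaces, are not the pullbacks of a Schubert basis of any product of partial flag varieties: a Schubert basis of $K_\bG(\Fl)$ consists of Grothendieck polynomials indexed by (tuples of) permutations in the Chern roots of the tautological flags, and only for Grassmannian factors do these reduce to single stable Grothendieck polynomials of a difference of consecutive tautological bundles. Moreover, the Bobi\'nski--Zwara relation between orbit closures and Schubert varieties is an equivalence of singularity types (via hom-controlled functors and associated fibrations), not a $\bG$-equivariant morphism identifying $K_\bG(V)$ with $K_\bG(\Fl)$ in a way that transports bases; the paper explicitly records that even for non-equioriented type A it has not been possible to relate the known Schubert-positive component formulas to positivity of quiver coefficients. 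Finally, even granting your Schubert realization, the appeal to Brion is vacuous: if $\Omega$ corresponds to an open subset of $X_w$ then $[\cO_{X_w}]$ \emph{is} a Schubert class, so its expansion in the Schubert basis is trivially $1\cdot[\cO_{X_w}]$, and all of the content of (a), (b), (c) is hidden in the change of basis from Schubert classes to the $\groth_{\mu_i}$-products --- exactly the combinatorial problem your argument assumes away and the paper can only solve for A$_3$ (and, for finiteness alone, for general Dynkin type via the resolution).
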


Our main result is a formula for the quiver coefficients when the
quiver $Q$ is of Dynkin type and $\Omega$ has rational singularities.
A quiver is of Dynkin type if the underlying (un-directed) graph is a
simply-laced Dynkin diagram, i.e.\ a disjoint union of Dynkin diagrams
of types A, D, and E.  In this case, every quiver cycle is an orbit
closure \cite{gabriel:unzerlegbare}.  Bobi\'nski and Zwara have proved
that all orbit closures have rational singularities if $Q$ is a quiver
of type A and $\bk$ is an algebraically closed field
\cite{bobinski.zwara:normality}, or if $Q$ is of type D and $\bk$ is
algebraically closed of characteristic zero
\cite{bobinski.zwara:schubert} (see also
\cite{lakshmibai.magyar:degeneracy} for the equioriented case).
Our formula relies on an explicit desingularization of an orbit
closure given by Reineke \cite{reineke:quivers}, as well as a list of
geometric and combinatorial properties of stable Grothendieck
polynomials established in \cite{buch:littlewood-richardson,
  buch:grothendieck}, and it proves the finiteness part (a) of
Conjecture~\ref{C:altsign}.  Our new formula generalizes the formula
for equioriented quiver coefficients proved in
\cite{buch:grothendieck}, but requires more operations on Grothendieck
polynomials, including multiplication and Grothendieck polynomials
indexed by sequences of negative integers.  For quivers of type A$_3$,
we prove the full statement of Conjecture~\ref{C:altsign}, and we
provide positive combinatorial formulas for the quiver coefficients in
terms of counting set-valued tableaux.

We remark that the positivity properties of quiver cycles suggested by
Conjecture~\ref{C:altsign} are analogous to positivity properties
satisfied by a closed and irreducible subvariety $Y$ of a homogeneous
space $G/P$.  In fact, the cohomology class of $Y$ can be uniquely
written as a positive linear combination of Schubert classes, where
the coefficients count the intersection points of $Y$ with the dual
Schubert varieties placed in general position.  Furthermore, Brion has
proved that if $Y$ has rational singularities, then the Grothendieck
class of $Y$ is an alternating linear combination of $K$-theoretic
Schubert classes \cite{brion:positivity}.  Aside from this analogy,
our conjecture is supported by computer experiments.

Some other formulas for quiver cycles of Dynkin type have been given,
which do not involve quiver coefficients.  First of all, Feh\'er and
Rim\'anyi have proved that the cohomology class of an orbit closure of
Dynkin type is uniquely determined, up to a constant, by the property
that its restriction to any disjoint orbit vanishes
\cite{feher.rimanyi:classes}.  Rim\'anyi and the author have used this
result to prove a positive combinatorial formula for the cohomology
class of any orbit closure for a (non-equioriented) quiver of type A,
which expresses this class as a sum of products of Schubert
polynomials \cite{buch.rimanyi:formula}.  A conjectured $K$-theory
version furthermore expresses the Grothendieck classes of such orbit
closures as alternating sums of products of Grothendieck polynomials.
These formulas generalize the (non-stable) component formulas for
equioriented quivers proved by Knutson, Miller, and Shimozono in
cohomology \cite{knutson.miller.ea:four} and by the author in
$K$-theory \cite{buch:alternating}.  Despite the positivity displayed
by the generalized component formulas, we have not been able to relate
them to positivity properties of quiver coefficients in the
non-equioriented cases.  Finally, a recent preprint of Knutson and
Shimozono \cite{knutson.shimozono:kempf} contains a formula for the
Grothendieck class of any orbit closure of Dynkin type which has
rational singularities.  This formula is stated in terms of Demazure
operators, but does not to our knowledge suggest any positivity
properties of quiver cycles.

This paper is organized as follows.  In section~\ref{S:groth} we
recall the definition and required properties of stable Grothendieck
polynomials.  Section~\ref{S:quivcoef} describes the equivariant
Grothendieck class of a quiver cycle, defines the corresponding quiver
coefficients, and discusses the available evidence for
Conjecture~\ref{C:altsign}.  We also give an example of an orbit
closure for which the associated quiver coefficients do not have
alternating signs.  This orbit closure was earlier studied by Zwara
\cite{zwara:orbit}, who proved that it does not have rational
singularities.  In section~\ref{S:degloci} we interpret quiver
coefficients in terms of formulas for degeneracy loci defined by a
quiver of vector bundles over a base variety.  In
section~\ref{S:resolution} we describe Reineke's desingularization of
orbit closures of Dynkin type.  This desingularization is used in
section~\ref{S:formula} to prove a combinatorial formula for quiver
coefficients of Dynkin type.  The last section contains the proof of
Conjecture~\ref{C:altsign} for quivers of type A$_3$.

Our formula for orbit closures of Dynkin type was proved at the time
the preprint \cite{knutson.shimozono:kempf} became available.  We do,
however, thank Allen Knutson for earlier suggesting that resolutions
that we used to compute quiver coefficients of types A and D might be
special cases of Reineke's general construction.  We have benefited
from many discussions with Rich\'ard Rim\'anyi on this general
subject, and from answers to questions and useful references provided
by Wilbert van der Kallen and Michel Brion regarding group actions and
equivariant $K$-theory.  We also thank Johan de Jong, Friedrich Knop,
Chris Woodward, and Bobi{\'n}ski Zwara for helpful comments and
answers to questions.


\section{Grothendieck polynomials}
\label{S:groth}

In this section we fix notation for stable Grothendieck polynomials
and state the required properties.  We refer to
\cite{buch:littlewood-richardson, buch:grothendieck} for more details.

A {\em partition\/} is a weakly decreasing sequence of non-negative
integers $\lambda = (\lambda_1 \geq \lambda_2 \geq \dots \geq
\lambda_\ell \geq 0)$.  
The {\em weight\/} of $\lambda$ is the sum $|\lambda| = \sum
\lambda_i$ of its parts and the {\em length\/} $\ell(\lambda)$ is the
number of non-zero parts.
We will identify the
partition $\lambda$ with its {\em Young diagram}, which has
$\lambda_1$ boxes in the top row, $\lambda_2$ boxes in the next row,
etc.  A {\em set-valued tableau\/} of shape $\lambda$ is a filling $T$
of the boxes of $\lambda$ with finite non-empty sets of positive
integers, such that the largest integer in any box is smaller than or
equal to the smallest integer in the box to the right of it, and
strictly smaller than the smallest integer in the box below it.  Given
an infinite set of commuting variables $x = (x_1, x_2, \dots)$, we let
$x^T$ denote the monomial in which the exponent of $x_i$ is the number
of boxes of $T$ containing $i$, and we let $|T|$ be the (total) degree
of $x^T$.  For example, the set-valued tableau
\[ T = \tableau{18}{
  {1\hmm{-.2},\hmm{-.3}2} & {2} & 
  {2\hmm{-.3},\hmm{-.8}5\hmm{-.4},\hmm{-.8}8} \\
  {4} & {7\hmm{-.2},\hmm{-.3}8}} 
\]
has shape $\lambda = (3,2)$ and gives $x^T = x_1 x_2^{\,3} x_4 x_5 x_7
x_8^{\,2}$ and $|T| = 9$.

The {\em single stable Grothendieck polynomial\/} for the partition
$\lambda$ is defined as the formal power series
\[ \groth_\lambda = \groth_\lambda(x) = \sum_T \, (-1)^{|T|-|\lambda|}
   \, x^T \,,
\]
where the sum is over all set-valued tableaux $T$ of shape $\lambda$.
This power series is symmetric, and its term of lowest degree is the
Schur function $s_\lambda$.  It was proved in
\cite{buch:littlewood-richardson} to be a special case of the stable
Grothendieck polynomials indexed by permutations of Fomin and Kirillov
\cite{fomin.kirillov:grothendieck}, which in turn were constructed as
limits of Lascoux and Sch\"utzenberger's ordinary Grothendieck
polynomials.  By convention, a stable Grothendieck polynomial applied
to a finite set of variables is defined by
$\groth_\lambda(x_1,\dots,x_p) =
\groth_\lambda(x_1,\dots,x_p,0,0,\dots)$.

Given a set-valued tableau $T$, define its {\em word\/} $w(T)$ to be
the sequence of integers in its boxes when read one row at the time
from left to right, with the rows ordered from bottom to top.
Integers in the same box are arranged in increasing order.  For
example, the tableau displayed above gives $w(T) =
(4,7,8,1,2,2,2,5,8)$.  A word of positive integers is called a {\em
  reverse lattice word\/} if every occurrence of an integer $i \geq 2$
is followed by more occurrences of $i-1$ than of $i$.  The {\em
  content\/} of a word is the sequence $\nu = (\nu_1,\nu_2,\dots)$
where $\nu_i$ is the number of occurrences of $i$ in the word.  For
any partition $\mu = (\mu_1,\dots,\mu_l)$, let $u(\mu) = (l^{\mu_l},
\dots, 2^{\mu_2}, 1^{\mu_1})$ be the word of the tableau of shape
$\mu$ in which all boxes in row $i$ contains the single integer $i$.
We need the following generalization of the classical
Littlewood-Richardson rule from
\cite[Thm.~5.4]{buch:littlewood-richardson} (an alternative proof can
be found in \cite[\S 3.5]{buch.kresch.ea:stable}).

\begin{thm} \label{T:mult}
The product of two stable Grothendieck polynomials is given by
\[ \groth_\lambda \cdot \groth_\mu = 
   \sum_\nu c^\nu_{\lambda \mu}\, \groth_\nu
\]
where the sum is over all partitions $\nu$, and $c^\nu_{\lambda\mu}$ is
equal to $(-1)^{|\nu|-|\lambda|-|\mu|}$ times the number of set-valued
tableaux $T$ of shape $\lambda$ for which the composition $w(T)
u(\mu)$ is a reverse lattice word with content $\nu$.
\end{thm}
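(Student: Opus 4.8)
The plan is to reduce Theorem~\ref{T:mult} to a sign-free, content-preserving bijection between families of set-valued tableaux, and then to exhibit such a bijection by means of a $K$-theoretic insertion algorithm. Because the lowest-degree term of $\groth_\lambda$ is the Schur function $s_\lambda$, the $\groth_\lambda$ are linearly independent and the coefficients in any expansion $\groth_\lambda\cdot\groth_\mu=\sum_\nu c^\nu_{\lambda\mu}\,\groth_\nu$ are uniquely determined; what is at stake is the explicit formula. Write $c^\nu_{\lambda\mu}=(-1)^{|\nu|-|\lambda|-|\mu|}\,N^\nu_{\lambda\mu}$, where $N^\nu_{\lambda\mu}$ is the number of set-valued tableaux $W$ of shape $\lambda$ for which $w(W)\,u(\mu)$ is a reverse lattice word of content $\nu$. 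A set-valued insertion will preserve total degree, so that a pair $(T,S)$ with $\mathrm{sh}(T)=\lambda$, $\mathrm{sh}(S)=\mu$ is matched to a tableau $U$ of shape $\nu$ with $|U|=|T|+|S|$; consequently every sign of the form $(-1)^{(\,\cdot\,)-|\lambda|-|\mu|}$ cancels, and comparing coefficients of an arbitrary monomial $x^\alpha$ in the two expansions reduces the theorem to the identity
\[
\sum_{\mathrm{sh}(T)=\lambda,\ \mathrm{sh}(S)=\mu} x^T x^S \;=\; \sum_\nu N^\nu_{\lambda\mu}\sum_{\mathrm{sh}(U)=\nu} x^U
\]
of symmetric power series.

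To prove this identity I would construct an explicit bijection $(T,S)\leftrightarrow(U,W)$ with $\mathrm{content}(T)+\mathrm{content}(S)=\mathrm{content}(U)$, where $U$ ranges over set-valued tableaux of shape $\nu$ and $W$ over set-valued tableaux of shape $\lambda$ for which $w(W)\,u(\mu)$ is a reverse lattice word of content $\nu$. The bijection is a set-valued analogue of the column insertion that proves the classical Littlewood-Richardson rule: one reads the word $w(S)$ of $S$ and inserts its letters one at a time into $T$, using a Hecke-style insertion in which a letter bumps the smallest admissible entry of a column but is instead \emph{absorbed} into an existing box, leaving the shape unchanged, whenever it would come to rest on a box that already contains it, so that each inserted letter enlarges the shape by at most one box. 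Recording, for each letter of $w(S)$, the box (if any) at which the shape grew together with the row of $S$ from which the letter originated, one obtains a recording tableau; because $w(S)$ is the reading word of a genuine set-valued tableau of shape $\mu$, the recording data is itself semistandard and satisfies a ballot condition, exactly as in the classical situation, and after the standard reindexing that converts a skew recording tableau of shape $\nu/\lambda$ into a straight-shape tableau $W$ of shape $\lambda$ tested for the ballot property against $u(\mu)$, it lands precisely in the set enumerated by $N^\nu_{\lambda\mu}$. Content preservation is automatic, since insertion merely rearranges the multiset of entries of $T$ and $S$ into that of $U$.

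The main obstacle is to make the $K$-theoretic insertion work correctly in the set-valued setting and to verify that the resulting map is a genuine bijection onto the target described above. Two issues require care. First, the bumping-and-absorption rules must be pinned down precisely enough that the forward insertion admits a well-defined reverse procedure, and one must check that forward and reverse insertion are mutually inverse even when boxes carry several integers and when a given step produces no change of shape; these ``non-moves'' must still be recorded and undone unambiguously. This is where the combinatorics genuinely diverges from the classical case and where most of the work lies. Second, one must justify the identification of the recording objects with the particular model appearing in the statement, namely set-valued tableaux $W$ of shape $\lambda$ with $w(W)\,u(\mu)$ a reverse lattice word of content $\nu$; this is the $K$-theoretic counterpart of the classical equivalence between the various tableau descriptions of Littlewood-Richardson coefficients, and may be handled either by a direct dissection of the recording tableau or by a $K$-theoretic jeu de taquin. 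Throughout, a useful guide and consistency check is that in lowest degree the entire construction must restrict to ordinary column insertion and reproduce the classical rule for $s_\lambda\cdot s_\mu$.

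As an alternative that avoids insertion, one could argue through the bialgebra structure on the span of the $\groth_\lambda$: compute the coproduct $\groth_\nu(x,y)=\sum_{\lambda,\mu}d^\nu_{\lambda\mu}\,\groth_\lambda(x)\,\groth_\mu(y)$ by analysing set-valued tableaux of shape $\nu$ whose entries are distributed over two ordered alphabets, and then recover the product structure constants $c^\nu_{\lambda\mu}$ by duality with respect to a suitable pairing. This merely relocates the difficulty into understanding the dual basis and its multiplication, so I would use it primarily as an independent check on the formula rather than as the main line of argument.
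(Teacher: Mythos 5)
Theorem~\ref{T:mult} is not proved in this paper: it is imported from \cite[Thm.~5.4]{buch:littlewood-richardson}, and the text also points to an alternative proof via Hecke insertion in \cite[\S 3.5]{buch.kresch.ea:stable}. Your outline --- reduce the identity to a content-preserving bijection $(T,S)\leftrightarrow (U,W)$ between pairs of set-valued tableaux and pairs (insertion tableau, recording data), with the recording data landing in the set counted by the rule --- is precisely the strategy of both of those proofs, so at the level of approach you are on the right track, and your linear-independence remark correctly disposes of uniqueness of the coefficients.

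What you have written is a plan rather than a proof, and the one place where you commit to details contains a genuine inconsistency. You stipulate that an inserted letter is \emph{absorbed}, leaving the shape unchanged, whenever it would come to rest on a box that already contains it, and you simultaneously assert that the insertion preserves total degree ($|U|=|T|+|S|$) and that ``content preservation is automatic.'' In the set-valued setting these cannot both hold: a letter absorbed into a box that already contains it contributes nothing to $x^U$, so the monomial degree drops. Either such steps must be shown never to occur, or the lost degree must be stored somewhere --- which is exactly why the recording tableau in the Hecke-insertion proof of \cite{buch.kresch.ea:stable} is itself set-valued (several letters can be recorded in a single box), and why the insertion of \cite{buch:littlewood-richardson} moves entire sets and must carry extra data for the reverse algorithm to know which elements were merged. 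Settling this bookkeeping, proving that forward and reverse insertion are mutually inverse (including at the ``non-move'' steps), and identifying the recording objects with set-valued tableaux $W$ of shape $\lambda$ such that $w(W)u(\mu)$ is a reverse lattice word of content $\nu$, constitute essentially the whole content of the theorem; as you yourself acknowledge, all of it remains to be done.
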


For example, the set-valued tableaux
\raisebox{.5mm}{$\tableau{13}{{1}}$}\,,
\raisebox{.5mm}{$\tableau{13}{{2}}$}\,, and
\raisebox{.5mm}{$\tableau{13}{{1\hmm{-.2},\!2}}$}\, correspond to the
terms of the product $\groth_{\tableau{6}{{}}} \cdot
\groth_{\tableau{6}{{}}} = \groth_{\tableau{6}{{}&{}}} +
\groth_{\tableau{6}{{}\\{}}} - \groth_{\tableau{6}{{}&{}\\{}}}$.  If a
coefficient $c^\nu_{\lambda \mu}$ is non-zero, then $|\lambda|+|\mu|
\leq |\nu|$ and (the Young diagrams of) $\lambda$ and $\mu$ can be
contained in $\nu$.

Theorem~\ref{T:mult} implies that the linear span $\Gamma = \bigoplus \Z
\groth_\lambda$ of all stable Grothendieck polynomials is a
commutative ring.  The stable Grothendieck polynomials are linearly
independent since the term of lowest degree in $\groth_\lambda$ is the
Schur function $s_\lambda$.

If $\lambda$, $\mu$, and $\nu$ are partitions such that $\lambda$ and
$\mu$ fit inside a rectangular partition $R$, we define
\[ d^\nu_{\lambda \mu} = c^\rho_{R\,\nu} \ \ , \ \ \text{where } \ \ 
   \rho = (R+\mu,\lambda) = \ \raisebox{-7mm}{\pic{.40}{attach}}
\]
is the partition obtained by attaching $\lambda$ and $\mu$ to the
bottom and right sides of $R$.  
This constant $d^\nu_{\lambda \mu}$ is independent of the choice of
rectangle $R$, and it is non-zero only if $|\nu| \leq |\lambda|+|\mu|$
and $\lambda, \mu \subset \nu$
\cite[Thm.~6.6]{buch:littlewood-richardson}.  These constants define a
coproduct $\Delta : \Gamma \to \Gamma \otimes \Gamma$ given by
$\Delta(\groth_\nu) = \sum_{\lambda,\mu} d^\nu_{\lambda \mu}\,
\groth_\lambda \otimes \groth_\mu$, which gives $\Gamma$ a structure
of commutative and cocommutative bialgebra with unit and counit
\cite[Cor.~6.7]{buch:littlewood-richardson}.

Given an additional set of commuting variables $y = (y_1,y_2,\dots)$,
define the {\em double stable Grothendieck polynomial\/} for the
partition $\nu$ by 
\begin{equation*} \groth_\nu(x;y) = \sum_{\lambda,\mu} d^\nu_{\lambda \mu}\,
   \groth_\lambda(x) \cdot \groth_{\mu'}(y) \,,
\end{equation*}
where $\mu'$ is the conjugate partition of $\mu$, obtained by
interchanging the rows and columns of $\mu$.  These power series are
separately symmetric in each set of variables $x$ and $y$, and they
satisfy the identities 
\begin{equation} \label{E:supersym}
  \groth_\nu(1-a^{-1},x \ ; \, 1-a,y) = \groth_\nu(x;y)
\end{equation}
for any indeterminate $a$ \cite{fomin.kirillov:grothendieck},
and
\begin{equation} \label{E:split}
  \groth_\nu(x,z; y,w) = \sum_{\lambda,\mu} d^\nu_{\lambda,\mu}\,
  \groth_\lambda(x;y) \, \groth_\mu(z;w)
\end{equation}
for arbitrary sets of variables $x$, $y$, $z$, and $w$
\cite[(6.1)]{buch:littlewood-richardson}.  Another useful identity is
the factorization formula \cite[Cor.~6.3]{buch:littlewood-richardson},
which states that
\begin{multline} \label{E:factor}
  \groth_{R+\mu,\lambda}(x_1,\dots,x_p; y_1,\dots,y_q) = \\
  \groth_\lambda(0;y_1,\dots,y_q) \cdot
  \groth_R(x_1,\dots,x_p;y_1,\dots,y_q) \cdot
  \groth_\mu(x_1,\dots,x_p)
\end{multline}
whenever $\lambda$ and $\mu$ are partitions with $\lambda_1 \leq q$
and $\ell(\mu) \leq p$, and $R = (q^p)$ is the rectangular partition
with $p$ rows and $q$ columns.

\begin{lemma} \label{L:lincomb}
  Let $R$ be a commutative ring that is complete with respect to the
  ideal $\fm \subset R$, $R = \lim R/\fm^i$, and let $y_1,\dots,y_q
  \in \fm$.  Any symmetric formal power series $f \in R \llbracket
  x_1,\dots,x_p \rrbracket^{\Sigma_p}$ can be written uniquely as an
  (infinite) linear combination
\begin{equation} \label{E:blc} 
  f = \sum_\lambda b_\lambda\, \groth_\lambda(x_1,\dots,x_p \,;\,
  y_1,\dots,y_q) 
\end{equation}
where the sum is over all partitions $\lambda$ with $\ell(\lambda)\leq
p$, and $b_\lambda \in R$.
\end{lemma}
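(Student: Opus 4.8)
The plan is to prove existence and uniqueness of the expansion~(\ref{E:blc}) by reducing, via the factorization identity, to the classical fact that the single stable Grothendieck polynomials $\groth_\lambda(x_1,\dots,x_p)$ form a topological basis for the symmetric power series in $x_1,\dots,x_p$ over any complete coefficient ring. First I would record this last fact precisely: since the lowest-degree term of $\groth_\lambda$ is the Schur function $s_\lambda$, and the Schur functions $s_\lambda$ with $\ell(\lambda)\le p$ form an $R$-module basis of the symmetric polynomials graded by degree, a straightforward ``triangularity'' argument (peel off the lowest-degree homogeneous part, match it with a unique $R$-linear combination of Schur functions, subtract the corresponding combination of Grothendieck polynomials, and iterate) shows that any $f\in R\llbracket x_1,\dots,x_p\rrbracket^{\Sigma_p}$ has a unique expansion $f=\sum_\lambda a_\lambda\,\groth_\lambda(x_1,\dots,x_p)$ with $a_\lambda\in R$. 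Here completeness of $R$ (not just with respect to $\fm$, but in the sense of formal power series in the $x_i$) is what lets the infinite sum converge.

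Next I would pass from the single to the double Grothendieck polynomials. The idea is that, for fixed $y_1,\dots,y_q\in\fm$, each $\groth_\lambda(x_1,\dots,x_p\,;\,y_1,\dots,y_q)$ is a symmetric power series in $x$ whose ``constant term'' (setting $x=0$) lies in the ideal generated by the $y_j$ when $\lambda\ne\emptyset$, and whose expansion in the single-Grothendieck basis is unitriangular: by the defining formula $\groth_\nu(x;y)=\sum_{\lambda,\mu}d^\nu_{\lambda\mu}\,\groth_\lambda(x)\,\groth_{\mu'}(y)$ together with $d^\nu_{\emptyset\,\nu}=1$, one gets
\[
  \groth_\nu(x_1,\dots,x_p\,;\,y_1,\dots,y_q) = \groth_\nu(x_1,\dots,x_p) + (\text{terms with }\lambda\subsetneq\nu\text{ and coefficient in }(y_1,\dots,y_q))\,.
\]
Thus the change-of-basis matrix from $\{\groth_\nu(x;y)\}$ to $\{\groth_\lambda(x)\}$ is ``identity plus strictly-upper-triangular-with-entries-in-$\fm$'' (with respect to any linear extension of containment of partitions with $\le p$ rows). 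Since $R$ is $\fm$-adically complete, such a matrix is invertible over $R$, with inverse again of the same triangular shape; concretely one inverts by the Neumann series in the nilpotent-modulo-$\fm^i$ correction term. Combining this with the single-variable basis result of the previous paragraph gives existence and uniqueness of~(\ref{E:blc}): write $f=\sum_\lambda a_\lambda\groth_\lambda(x)$, then substitute the inverted change of basis to produce the $b_\lambda$, and uniqueness is inherited from uniqueness of the $a_\lambda$.

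The main obstacle is the convergence/completeness bookkeeping, i.e.\ making rigorous the claim that the triangular change of basis can be inverted \emph{in infinitely many variables $\lambda$ at once}. The subtlety is that for a fixed degree $d$, infinitely many partitions $\nu$ with $\ell(\nu)\le p$ have $|\nu|\le d$ only finitely often — in fact the number of partitions of each weight with at most $p$ parts is finite, so the set of $\lambda$ contributing to the degree-$\le d$ part of $f$ is finite once we also bound by how deep into $\fm$ we look. More carefully: to pin down $b_\lambda$ modulo $\fm^N$ one only needs finitely many terms, because each application of the correction term either raises the $x$-degree or the $\fm$-order, so the Neumann series stabilizes modulo $(\fm,x)^M$ for every $M$. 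I would phrase this as: work in the ring $R\llbracket x_1,\dots,x_p\rrbracket$ which is complete with respect to the ideal $\fm + (x_1,\dots,x_p)$ (using that $R$ is $\fm$-complete), observe that the correction operator $f\mapsto\sum_\lambda a_\lambda(\groth_\lambda(x;y)-\groth_\lambda(x))$ is topologically nilpotent for this topology, and conclude that $\mathrm{id}+(\text{correction})$ is a topological automorphism. The remaining steps — symmetry in $x$ being preserved (clear, since every $\groth_\lambda(x;y)$ is symmetric in $x$), and the length bound $\ell(\lambda)\le p$ (forced because $\groth_\lambda(x_1,\dots,x_p)=0$ when $\ell(\lambda)>p$, as it has no set-valued tableaux, and the triangularity keeps us inside partitions with $\le p$ rows) — are routine.
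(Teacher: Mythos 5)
Your proof is correct in substance, and its first half (reduction to the single stable Grothendieck polynomials $\groth_\lambda(x)$ via triangularity against Schur functions) is exactly how the paper disposes of the case $y_1=\dots=y_q=0$. Where you genuinely diverge is in passing from the single to the double basis. The paper does not invert a triangular matrix abstractly: it introduces the auxiliary elements $z_i = 1-(1-y_i)^{-1}\in\fm$ and uses the supersymmetry identity (\ref{E:supersym}) to get $\groth_\nu(x)=\groth_\nu(z,x\,;y)$, whence the splitting formula (\ref{E:split}) produces closed-form transition formulas in both directions, namely $b_\lambda=\sum_{\nu,\mu}b'_\nu\,d^\nu_{\lambda\mu}\,\groth_\mu(z)$ and $b'_\lambda=\sum_{\nu,\mu}b_\nu\,d^\nu_{\lambda\mu}\,\groth_{\mu'}(y)$, verified to be mutually inverse using coassociativity of $\Delta$ and the vanishing $\groth_\nu(z;y)=\delta_{\nu\emptyset}$. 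Your Neumann-series inversion avoids the supersymmetry identity at this step and is more formal; the paper's route yields explicit transition coefficients. Both rest on the same convergence mechanism: $d^\nu_{\lambda\mu}\ne 0$ forces $\mu\subset\nu$ and $|\mu|\ge|\nu|-|\lambda|$, so the relevant coefficients lie in $\fm^{|\nu|-|\lambda|}$ and the infinite sums over $\nu\supset\lambda$ converge $\fm$-adically — the quantitative bound you should state explicitly when you assert that $b_\lambda$ is determined modulo $\fm^N$ by finitely many terms.

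One inaccuracy to fix: the correction $\groth_\nu(x;y)-\groth_\nu(x)$ is \emph{not} supported on $\lambda\subsetneq\nu$, so the change-of-basis matrix is not ``identity plus strictly upper triangular.'' The counit property gives $d^\nu_{\lambda\emptyset}=\delta_{\lambda\nu}$, but $d^\nu_{\nu\mu}$ is typically nonzero for $\mu\ne\emptyset$ (e.g.\ $d^{(1)}_{(1),(1)}=-1$, from $\groth_{(1)}\cdot\groth_{(1)}=\groth_{(2)}+\groth_{(1,1)}-\groth_{(2,1)}$), so the correction has nonzero diagonal entries. The correct statement is that the correction is supported on pairs $\lambda\subset\nu$ with all entries in $\fm$ (since $\mu\ne\emptyset$ forces $\groth_{\mu'}(y)\in\fm$). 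As your own convergence argument is driven by the $\fm$-adic filtration rather than by strict triangularity, this does not break the proof — the containment $\lambda\subset\nu$ is still needed so that each entry of each power of the correction matrix is a finite sum — but the claim should be restated accordingly.
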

\begin{proof}
  Write $x = (x_1,\dots,x_p)$ and $y = (y_1,\dots,y_q)$.  Set $z =
  (z_1,\dots,z_q)$ where $z_i = 1-(1-y_i)^{-1} = - \sum_{k \geq 1}
  y_i^{\,k} \in R$, which is well defined because $y_i \in \fm$.  If
  $y_1 = \dots = y_q = 0$, then the lemma follows because the term of
  lowest degree in $\groth_\lambda(x)$ is the Schur polynomial
  $s_\lambda(x)$.  Given an expression
\begin{equation} \label{E:bplc}
  f = \sum_\lambda b'_\lambda\, \groth_\lambda(x)
\end{equation}
we can define coefficients $b_\lambda \in R$ by (*) $b_\lambda =
\sum_{\nu,\mu} b'_\nu\, d^\nu_{\lambda \mu}\, \groth_\mu(z)$.  This
infinite sum is well defined in $R$ because $z_i \in \fm$ and
$d^\nu_{\lambda \mu}$ is non-zero only when $|\mu| \geq |\nu| -
|\lambda|$.  By (\ref{E:supersym}) and (\ref{E:split}) we furthermore
have
\[ f = \sum_\nu b'_\nu\, \groth_\nu(z,x \,;\, y)
     = \sum_{\nu,\lambda,\mu} b'_\nu\, d^\nu_{\lambda \mu}\,
       \groth_\mu(z)\, \groth_\lambda(x;y) \\
     = \sum_\lambda b_\lambda\, \groth_\lambda(x ; y) \,.
\]
Similarly, given coefficients $b_\lambda \in R$ such that
(\ref{E:blc}) holds, we obtain coefficients $b'_\lambda \in R$ for
which (\ref{E:bplc}) holds by setting $b'_\lambda = \sum_{\nu,\mu}
b_\nu \, d^\nu_{\lambda \mu} \, \groth_{\mu'}(y)$.  If $f=0$ then all
these coefficients $b'_\lambda$ must be zero.  On the other hand, the
coefficients $b_\lambda$ can be recovered from the $b'_\lambda$ by (*)
since for any fixed partition $\lambda$ we have
\[\begin{split}
\sum_{\nu,\mu} \left(\sum_{\sigma,\tau} b_\sigma\, d^\sigma_{\nu \tau}\,
  \groth_{\tau'}(y) \right) d^\nu_{\lambda \mu}\, \groth_\mu(z)
&= \sum_{\sigma,\nu,\mu,\tau} b_\sigma\, d^\sigma_{\lambda \nu}
  d^\nu_{\mu \tau}\, \groth_\mu(z) \groth_{\tau'}(y) \\
&= \sum_{\sigma,\nu} b_\sigma\, d^\sigma_{\lambda \nu}\, \groth_\nu(z;y)
 = b_\lambda \,.
\end{split}\]
The first equality holds because $\Delta$ is a coproduct and the last
follows from (\ref{E:supersym}) because $\groth_\nu(z;y)$ is equal to
one if $\nu$ is the empty partition and is zero otherwise.
\end{proof}

The stable Grothendieck polynomials given by partitions can be
generalized to stable polynomials $\groth_I$ indexed by arbitrary
finite sequences of integers.  These can be defined by the recursive
identities 
\begin{equation} \label{E:intseq}
  \groth_{I,p,q,J} = \sum_{k=p+1}^q \groth_{I,q,k,J} -
  \sum_{k=p+1}^{q-1} \groth_{I,q-1,k,J}
\end{equation}
whenever $I$ and $J$ are integer sequences and $p < q$ are integers,
as well as the identity $\groth_{I,p} = \groth_I$ for any integer
sequence $I$ and negative integer $p$.  Thus any finite integer
sequence $I$ gives a well defined element $\groth_I \in \Gamma$.  This
notation is required in our formula for quiver coefficients of Dynkin
type given in section \ref{S:formula}.


\section{Quiver coefficients}
\label{S:quivcoef}

In this section we define quiver coefficients and discuss their
conjectured positivity properties.  We start by giving an elementary
construction of the Grothendieck class of an invariant closed
subvariety in a representation.

\subsection{Grothendieck classes}
\label{S:gclass}

Let $G$ be a linear algebraic group over the field $\bk$ and let $V$
be a rational representation of $G$, i.e.\ $V$ is a $\bk$-vector space
of finite dimension and the $G$-action is given by a map of varieties
$G \to \GL(V)$.  Then the coordinate ring $\bk[V] =
\Sym^\bull(V^\vee)$ of polynomial functions on $V$ has a locally
finite linear $G$-action, which in set-theoretic notation is given by
$(g.f)(v) = f(g^{-1}.v)$ for $g\in G$, $f\in \bk[V]$, and $v \in V$.
Locally finite means that $\bk[V]$ is a union of rational
representations of $G$.  Define a {\em $(\bk[V],G)$-module\/} to be a
module $M$ over $\bk[V]$ together with a locally finite linear
$G$-action on $M$ which satisfies that $g.(f\, m) = (g.f)\, (g.m)$ for
$m \in M$.  We will say that $M$ is {\em finitely generated\/} (resp.\ 
{\em free\/}) if this is true as a $\bk[V]$-module.  If $M$ is
finitely generated, then there exists a finite dimensional $G$-stable
vector subspace $U \subset M$ which contains a set of generators.
Notice that $\bk[V] \otimes_\bk U$ has a natural structure of
$(\bk[V],G)$-module, where $\bk[V]$ acts on the first factor and $G$
acts on both factors.  The map $\bk[V]\otimes U \to M$ given by $f
\otimes u \mapsto fu$ is a surjective $G$-equivariant map.  Since $M$
has finite projective dimension as a module over the polynomial ring
$\bk[V]$, and all projective $\bk[V]$-modules are free, it follows
that $M$ has a finite equivariant resolution by finitely generated
free $(\bk[V],G)$-modules.

Let $\Omega \subset V$ be a $G$-stable closed subvariety.  Then the
coordinate ring $\cO_\Omega = \bk[V]/I(\Omega)$ is a finitely
generated $(\bk[V],G)$-module, so it has an equivariant resolution
\begin{equation} \label{E:kvgres}
  0 \to F_p \to F_{p-1} \to \dots \to F_0 \to \cO_\Omega \to 0
\end{equation}
where $F_i$ is a finitely generated free $(\bk[V],G)$-module.  Notice
that $F_i/\fm F_i$ is a rational representation of $G$ for each $i$,
where $\fm = I(0) \subset \bk[V]$ is the maximal ideal of functions
vanishing at the origin of $V$.

Let $\Rep(G)$ be the ring of {\em virtual representations\/} of $G$,
i.e.\ formal linear combinations of irreducible rational
representations.  Multiplication in this ring is defined by tensor
products.  We define the {\em $G$-equivariant Grothendieck class\/} of
$\Omega$ to be the virtual representation
\[ [\cO_\Omega] \ = \ \sum_{i \geq 0} (-1)^i [F_i/\fm F_i] 
   \ \in \Rep(G) \,. 
\]
It follows from results of Thomason \cite{thomason:equivariant*1} that
this class can be identified with the class of the structure sheaf of
$\Omega$ in the equivariant $K$-theory of $V$, see section
\ref{S:degloci}.

\subsection{Classes of quiver cycles}
\label{S:quivclass}

Let $V = \bigoplus_{a \in Q_1} \Hom(E_{t(a)}, E_{h(a)})$ be the vector
space of representations of the quiver $Q$.  Then $V$ is a rational
representation of the group $\bG = \prod_{i=1}^n \GL(E_i)$.  It
follows that any quiver cycle $\Omega \subset V$ defines a
Grothendieck class $[\cO_\Omega] \in \Rep(\bG)$.

Choose a decomposition of each vertex vector space as a sum of one
dimensional vector spaces, $E_i = L^i_1 \oplus \dots \oplus
L^i_{e_i}$, and let $\bT \subset \bG$ be the maximal torus that
preserves these decompositions.  Then the virtual representations of
$\bT$ form the Laurent polynomial ring $\Rep(\bT) =
\Z[\hmm{.5}[L^i_j]^{\pm 1}]$.  It follows from
\cite[Cor.~II.2.7]{jantzen:representations*1} that the restriction
map $\Rep(\bG) \to \Rep(\bT)$ is injective, and the image
must consist of Laurent polynomials that are simultaneously symmetric
in each group of variables $\{[L^i_1], \dots, [L^i_{e_i}]\}$.  Since
all such polynomials can be generated by the exterior powers
$[\bigwedge^j E_i] \in \Rep(\bG)$, it follows that $\Rep(\bG) \subset
\Rep(\bT)$ is the subring of simultaneously symmetric Laurent
polynomials.

Set $x^i_j = 1 - [L^i_j]^{-1}$ for $1 \leq i \leq n$ and $1 \leq j
\leq e_i$, and let $\Z\llbracket x^i_j \rrbracket$ be the ring of
formal power series in these variables.  We will consider $\Rep(\bT)$
as a subring of $\Z\llbracket x^i_j \rrbracket$, with $[L^i_j] =
\sum_{p \geq 0} (x^i_j)^p$.  In particular, the Grothendieck class
$[\cO_\Omega]$ can be regarded as a power series in $\Z\llbracket
x^i_j \rrbracket$.  The $\bT$-equivariant cohomology of $V$ can be
identified with the polynomial ring $H^*_\bT(V) = \Z[x^i_j]$, and
$H^*_\bG(V) \subset H^*_\bT(V)$ is the subring of simultaneously
symmetric polynomials.  The power series $[\cO_\Omega] \in
\Z\llbracket x^i_j \rrbracket$ has no non-zero terms of total degree
smaller than $d = \codim(\Omega;V)$, and the term of degree $d$ is the
cohomology class $[\Omega] \in H^d_{\bG}(V)$, see section
\ref{S:interpret}.

If $U$ is any rational representation of $\bG$, we can write it as a
direct sum of one dimensional $\bT$-representations, $U = L_1 \oplus
\dots \oplus L_u$.  Given a partition $\nu$ we then define
$\groth_\nu(U) = \groth_\nu(1-[L_1]^{-1}, \dots, 1-[L_u]^{-1})
\in \Rep(\bG) \subset \Rep(\bT)$.  For example, $\groth_\nu(E_i) =
\groth_\nu(x^i_1, \dots, x^i_{e_i})$.  More generally, given two
rational $\bG$-representations $U_1$ and $U_2$ we define
\begin{equation} \label{E:groth_virt}
  \groth_\nu(U_1-U_2) \ = \ \sum_{\lambda,\mu} d^\nu_{\lambda \mu} \,
  \groth_\lambda(U_1) \, \groth_{\mu'}(U_2^\vee) \ \in \Rep(\bG)
\end{equation}
where $U_2^\vee$ is the dual representation of $U_2$.  The Schur
function $s_\nu(U_1 - U_2)$ is defined as the term of total (and
lowest) degree $|\nu|$ in $\groth_\nu(U_1 - U_2)$ when considered as a
power series in $\Z\llbracket x^i_j \rrbracket$.

From now on we assume that $Q$ is a quiver without oriented loops.
Our definition of quiver coefficients is based on the following
proposition.  Recall that we set $M_i = \bigoplus_{a : h(a)=i}
E_{t(a)}$ for $i \in Q_0$.

\begin{prop} \label{P:lincomb}
  Let $Q$ be a quiver without oriented loops.  Every element of
  $\Rep(\bG)$ can be expressed uniquely as a (possibly infinite)
  $\Z$-linear combination of products
\[ \groth_{\mu_1}(E_1-M_1) \, \groth_{\mu_2}(E_2-M_2) \cdots
   \groth_{\mu_n}(E_n-M_n)
\]
given by partitions $\mu_1,\dots,\mu_n$ such that $\ell(\mu_i) \leq
e_i$ for each $i$.
\end{prop}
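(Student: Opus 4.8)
The plan is to induct on the number of vertices $n$, reducing the $n$-vertex statement to the $(n-1)$-vertex statement by peeling off a sink of $Q$. Since $Q$ has no oriented loops, its underlying graph admits a topological ordering of the arrows; after relabeling the vertices we may assume that vertex $n$ is a sink, i.e.\ no arrow has tail $n$, so that $E_n$ does not occur in any $M_i$ for $i < n$, and $M_n = \bigoplus_{a : h(a) = n} E_{t(a)}$ involves only $E_1, \dots, E_{n-1}$. Let $\bG' = \prod_{i=1}^{n-1} \GL(E_i)$ and $\bT' \subset \bG'$ the corresponding torus, so that $\Rep(\bG) = \Rep(\bG') \otimes_\Z \Rep(\GL(E_n))$ and $\Rep(\bT) = \Rep(\bT')[[L^n_1]^{\pm 1}, \dots, [L^n_{e_n}]^{\pm 1}]$, the latter sitting inside $\Rep(\bT')\llbracket x^n_1, \dots, x^n_{e_n}\rrbracket$.

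The key step is to expand in the $E_n$ variables first. Fix $f \in \Rep(\bG)$. Regarded as a power series in $x^n_1, \dots, x^n_{e_n}$ with coefficients in the completion of $\Rep(\bT')$ at its augmentation ideal, $f$ is symmetric in $x^n_1, \dots, x^n_{e_n}$. Applying Lemma~\ref{L:lincomb} with $R$ the completed ring $\Rep(\bT')^\wedge$, with $p = e_n$ the $x$-variables equal to $(x^n_1,\dots,x^n_{e_n})$, and with $q$ and the $y$-variables chosen to be the weights $1 - [L]^{-1}$ of the one-dimensional summands of $M_n$ (all of which lie in $\fm$), we get a unique expansion
\[
  f = \sum_{\mu_n} b_{\mu_n} \, \groth_{\mu_n}(E_n - M_n) \,,
\]
over partitions $\mu_n$ with $\ell(\mu_n) \leq e_n$, with coefficients $b_{\mu_n} \in \Rep(\bT')^\wedge$. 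Here I am using that $\groth_{\mu_n}(E_n - M_n) = \groth_{\mu_n}(x^n_1,\dots,x^n_{e_n}; y)$ by definition~(\ref{E:groth_virt}) together with the expansion formula defining double stable Grothendieck polynomials. Two things must be checked at this stage: first, that each $b_{\mu_n}$ actually lies in $\Rep(\bG')$ and not merely in the completion — this follows because $\Rep(\bG') \subset \Rep(\bT')^\wedge$ is characterized as the simultaneously symmetric elements, $f$ is simultaneously symmetric in all variables including those of $E_1, \dots, E_{n-1}$, the basis elements $\groth_{\mu_n}(E_n - M_n)$ are themselves $\bG$-equivariant hence simultaneously symmetric in the $E_j$ variables for $j < n$, and the expansion is unique, so by applying any $\bW' = \prod_{i<n}\Sigma_{e_i}$ symmetrization and using uniqueness one gets $w.b_{\mu_n} = b_{\mu_n}$; second, that the $E_j$-degree of the terms is bounded so that the resulting double sum over $(\mu_1,\dots,\mu_n)$ is a well-defined (possibly infinite) expansion in the appropriate completion — this is handled exactly as in Lemma~\ref{L:lincomb}, since $d^\nu_{\lambda\mu}$ and $c^\nu_{\lambda\mu}$ vanish unless $|\nu| \geq |\lambda| + |\mu|$, so coefficients of a fixed monomial come from only finitely many terms.

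Having produced $b_{\mu_n} \in \Rep(\bG')$ for each $\mu_n$, the induction hypothesis applied to the quiver $Q'$ obtained by deleting vertex $n$ (which still has no oriented loops, and whose $M_i$ for $i < n$ are unchanged since $n$ is a sink) yields a unique expansion of each $b_{\mu_n}$ as a $\Z$-linear combination of products $\groth_{\mu_1}(E_1 - M_1) \cdots \groth_{\mu_{n-1}}(E_{n-1} - M_{n-1})$ with $\ell(\mu_i) \leq e_i$. Substituting gives existence. For uniqueness, suppose a $\Z$-linear combination of the $n$-fold products vanishes; group terms by $\mu_n$ and use the uniqueness half of Lemma~\ref{L:lincomb} in the $E_n$-variables to conclude that for each fixed $\mu_n$ the inner $\Rep(\bG')$-combination vanishes, then apply the inductive uniqueness statement. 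The base case $n = 1$ is the content of Lemma~\ref{L:lincomb} with $q = 0$ (equivalently, the statement that the Schur functions, hence the $\groth_\lambda$, form a basis), since $M_1 = 0$ and $\Rep(\GL(E_1))$ consists of symmetric Laurent polynomials.

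The main obstacle I anticipate is the bookkeeping around completions and descent from $\bT$ to $\bG$: Lemma~\ref{L:lincomb} is stated over a single set of variables $x_1,\dots,x_p$ with a fixed complete coefficient ring, so to apply it one must fix the right coefficient ring at each stage of the induction (the completion of $\Rep(\bT')$, not $\Rep(\bG')$, because the $y_i$ coming from $M_n$ need not be symmetric in the $E_j$-variables individually — indeed a single arrow $j \to n$ contributes the variables $1 - [L^j_k]^{-1}$, which is fine, but multiplicities must be tracked). One then re-descends to $\bG$-equivariance at the end of each stage using uniqueness plus the symmetric-polynomial characterization of $\Rep(\bG)$. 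This is the step where the hypothesis that $Q$ has no oriented loops is essential: it guarantees a sink exists so that the $E_n$-variables are genuinely "new" and do not appear among the $y$'s, which is exactly what makes the recursive application of Lemma~\ref{L:lincomb} legitimate.
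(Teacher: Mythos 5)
Your proposal is correct and follows essentially the same route as the paper: choose a vertex that is not the tail of any arrow (a sink), apply Lemma~\ref{L:lincomb} in that vertex's variables with coefficients in the power series ring of the remaining variables (the $y$-variables coming from $M_n$, which avoid $E_n$ precisely because the quiver has no oriented loops), and then induct on the quiver with that vertex deleted. The extra care you take with descent from $\bT$ to $\bG$ and with the completion is a correct elaboration of points the paper leaves implicit, not a different argument.
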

\begin{proof}
  Let $l \in Q_0$ be a vertex which is not the tail of any arrow in
  $Q$.  Since every element of $\Rep(\bG) \subset \Z\llbracket x^i_j
  \rrbracket $ is symmetric in the variables $x^l_1, \dots,
  x^l_{e_l}$, we can use Lemma~\ref{L:lincomb} to write it as an
  (infinite) linear combination of the elements $\groth_{\mu_l}(E_l -
  M_l)$ given by partitions $\mu_l$ with at most $e_l$ rows, and with
  coefficients in the subring $R = \Z\llbracket x^i_j : i \neq l
  \rrbracket$.  By induction on $n$, applied to the quiver obtained
  from $Q$ by removing the vertex $l$ and all arrows to it, it follows
  that each of the coefficients are unique $\Z$-linear combinations of
  the products $\prod_{i \neq l} \groth_{\mu_i}(E_i-M_i)$.
\end{proof}

\begin{defn} \label{D:quivcoef}
  Let $\Omega \subset V$ be a quiver cycle for a quiver $Q$ without
  oriented loops.  The {\em quiver coefficients\/} of
  $\Omega$ are the unique integers $c_\mu(\Omega) \in \Z$, indexed by
  sequences $\mu = (\mu_1,\dots,\mu_n)$ of partitions $\mu_i$ with
  $\ell(\mu_i) \leq e_i$, such that
  \[ [\cO_\Omega] \ = \ \sum_\mu c_\mu(\Omega) \,
  \groth_{\mu_1}(E_1-M_1)\, \groth_{\mu_2}(E_2-M_2) \cdots
  \groth_{\mu_n}(E_n-M_n) \ \in \Rep(\bG) \,.
  \]
  The {\em cohomological quiver coefficients\/} of $\Omega$ are the
  coefficients $c_\mu(\Omega)$ for which $\sum |\mu_i| =
  \codim(\Omega)$.
\end{defn}

It follows from Corollary \ref{C:univlocus} below that these
coefficients generalize the equioriented quiver coefficients from
\cite{buch.fulton:chern, buch:grothendieck}.  The cohomological quiver
coefficients determine the cohomology class of $\Omega$ as
\[ [\Omega] = \sum_{\sum |\mu_i|=\codim(\Omega)} c_\mu(\Omega)\,
   s_{\mu_1}(E_1-M_1) \, s_{\mu_2}(E_2-M_2) \cdots s_{\mu_n}(E_n-M_n)
   \in H^*_\bG(V) \,.
\]

\begin{example} \label{E:porteous}
  Let $Q = \{ 1 \to 2 \}$ be a quiver of type A$_2$.  Then any quiver
  cycle in $V = \Hom(E_1,E_2)$ has the form $\Omega_r = \{ \phi \in V
  \mid \rank(\phi) \leq r \}$.  It follows from the Thom-Porteous
  formula of \cite[Thm.~2.3]{buch:grothendieck} and
  Corollary~\ref{C:univlocus} that $[\cO_{\Omega_r}] = \groth_{R}(E_2
  - E_1)$, where $R = (e_1-r)^{e_2-r}$ is the rectangular partition
  with $e_2-r$ rows and $e_1-r$ columns.  We have $c_{(R)}(\Omega_r) =
  1$, and all other quiver coefficients of $\Omega_r$ are zero.
\end{example}

\subsection{Properties of quiver coefficients}

We do not know a good reason why the quiver coefficients should
satisfy the finiteness and positivity properties stated in
Conjecture~\ref{C:altsign}.  In the case of equioriented quivers where
this conjecture is known, these properties are consequences of
explicit formulas for quiver coefficients that are proved with a
combination of geometric and combinatorial methods.  This is also true
for our proof of the finiteness part (a) for quivers of Dynkin type in
section \ref{S:formula}, and our proof of the full conjecture for
quivers of type A$_3$ in section \ref{S:a3}.  However, if the full
conjecture is true, then it is natural to expect that some underlying
geometric principle is in play.

One might try to express the classes of quiver cycles as linear
combinations of other products of Grothendieck polynomials than those
used in Definition~\ref{D:quivcoef}, but most choices do not lead to
finiteness or positivity properties of the coefficients (or they lead
to such properties that follow from Conjecture~\ref{C:altsign}).  The
one interesting alternative choice that we know about is to define
{\em dual quiver coefficients\/} $\wt c_\mu(\Omega)$ of a quiver cycle
$\Omega$ by the identity
\[ [\cO_\Omega] = \sum_\mu \wt c_\mu(\Omega) \,
   \groth_{\mu_1}(N_1 - E_1)\, \groth_{\mu_2}(N_2 - E_2) \cdots 
   \groth_{\mu_n}(N_n - E_n) \,,
\]
where the sum is over sequences $\mu = (\mu_1,\dots,\mu_n)$ of
partitions such that $\mu_i$ has at most $e_i$ columns for each $i$,
and $N_i = \bigoplus_{a:t(a)=i} E_{h(a)}$.  These dual coefficients
are nothing but the ordinary quiver coefficients for $\Omega$ when
considered as a cycle of quiver representations on the dual vector
spaces $E_i^\vee$, for the quiver obtained from $Q$ by reversing all
arrows.  This follows from the identity $\groth_\lambda(U_1-U_2) =
\groth_{\lambda'}(U_2^\vee-U_1^\vee)$ which holds for arbitrary
rational representations $U_1$ and $U_2$ of $\bG$ \cite[Lemma
3.4]{buch:littlewood-richardson}.  We note that for an equioriented
quiver $Q = \{ 1 \to 2 \to \dots \to n\}$, the two notions of quiver
coefficients also coincide without modifying the quiver.  In fact, an
equioriented coefficient $c_{(\mu_1,\dots,\mu_n)}(\Omega)$ is non-zero
only if $\mu_1$ is the empty partition, in which case we have
$c_{(\emptyset,\mu_2,\dots,\mu_n)}(\Omega) = \wt
c_{(\mu_2,\dots,\mu_n,\emptyset)}(\Omega)$.  On the other hand, for
quivers that are not equioriented, it appears to be difficult to
relate the properties of quiver coefficients and dual quiver
coefficients of the same quiver cycle.  For the simplest example, the
reader is invited to compare the formulas for inbound and outbound
A$_3$-quivers proved in section \ref{S:a3}.

It is convenient to encode the quiver coefficients for $\Omega$ as a
linear combination of tensors,
\begin{equation}
   P_\Omega = \sum_\mu c_\mu(\Omega)\, \groth_{\mu_1}\otimes
  \groth_{\mu_2} \otimes \dots \otimes \groth_{\mu_n} \,.
\end{equation}
If Conjecture~\ref{C:altsign} (a) is true, then this is an element of
the tensor power $\Gamma^{\otimes n}$ of the ring of stable
Grothendieck polynomials $\Gamma$; otherwise $P_\Omega$ lives in a
completion of this ring.  We will use the notation that for any linear
combination $P = \sum_\mu c_\mu\, \groth_{\mu_1} \otimes \dots \otimes
\groth_{\mu_n}$ and classes $\alpha_1,\dots,\alpha_n \in \Rep(\bG)$, we
set $P(\alpha_1, \dots, \alpha_n) = \sum_\mu c_\mu\,
\groth_{\mu_1}(\alpha_1) \cdots \groth_{\mu_n}(\alpha_n)$.  The
definition of quiver coefficients then states that $[\cO_\Omega] =
P_\Omega(E_1-M_1,\dots,E_n-M_n) \in \Rep(\bG)$.

In addition to the evidence for Conjecture~\ref{C:altsign} mentioned
above, we have used Macaulay 2 \cite{grayson.stillman:macaulay} and
other software to compute the quiver coefficients of many quiver
cycles, including some that are not orbit closures (and not of Dynkin
type).  In almost all cases where Macaulay 2 was able to produce a
free resolution of the coordinate ring of a quiver cycle, we could
convert the corresponding expression for its Grothendieck class into a
{\em finite\/} linear combination of products of Grothendieck
polynomials as in Definition~\ref{D:quivcoef}.  In a few cases we did
not succeed in this, but expect that this was caused by lack of
computing power.  We have never encountered any negative cohomological
quiver coefficients; and when the general quiver coefficients failed
to have alternating signs, we could often show that the corresponding
quiver cycle did not have rational singularities, for example by using
Brion's theorem described in the introduction \cite{brion:positivity}.

\begin{example}
Let $Q = \{ 1 \toto 2 \}$ be the Kronecker quiver and fix the
dimension vector $e = (3,3)$.  Let $\Omega \subset V$ be the closure
of the orbit through the point
\[ \left( 
   \left[\begin{smallmatrix} 0 & 0 & 0 \\ 1 & 0 & 0 \\ 0 & 1 & 0
   \end{smallmatrix}\right] ,
   \left[\begin{smallmatrix} 1 & 0 & 0 \\ 0 & 0 & 0 \\ 0 & 0 & 1 
   \end{smallmatrix}\right]
   \right) \,.
\]
Zwara has shown in \cite{zwara:orbit} that this orbit closure has ugly
singularities, in particular they are not rational.  With help from
Macaulay 2 \cite{grayson.stillman:macaulay}, we have determined the
quiver coefficients for $\Omega$.  There are finitely many of them,
and they are encoded in the following expression $P_\Omega$ satisfying
that $P_\Omega(E_1\,, E_2-E_1\oplus E_1) = [\cO_\Omega]$\,:
\vmm{1}

\newcommand{\dgbrk}{\\ \hmm{10}}
\newcommand{\lnbrk}{\\ \hmm{13}}
\noindent
$\hmm{2} P_\Omega = \,
3 \otimes \groth_{3,1}
+4\,\groth_{1} \otimes \groth_{3}
+1 \otimes \groth_{2,2}
+2\,\groth_{1} \otimes \groth_{2,1} 
+3\,\groth_{2} \otimes \groth_{2}
+\groth_{2} \otimes \groth_{1,1} \lnbrk  
+2\,\groth_{3} \otimes \groth_{1}
+\groth_{4} \otimes 1
\dgbrk
-3 \otimes \groth_{3,2}
-8\,\groth_{1} \otimes \groth_{3,1}
-6\,\groth_{2} \otimes \groth_{3}
-2\,\groth_{1} \otimes \groth_{2,2}
-5\,\groth_{2} \otimes \groth_{2,1}
-4\,\groth_{3} \otimes \groth_{2} \lnbrk  
-2\,\groth_{3} \otimes \groth_{1,1}
-2\,\groth_{4} \otimes \groth_{1}
\dgbrk
-1 \otimes \groth_{4,2}
-3 \otimes \groth_{4,1,1}
-6\,\groth_{1} \otimes \groth_{4,1}
-3\,\groth_{2} \otimes \groth_{4}
-6\,\groth_{1,1} \otimes \groth_{4} 
+4\,\groth_{1} \otimes \groth_{3,2} \lnbrk  
+7\,\groth_{2} \otimes \groth_{3,1}
+2\,\groth_{3} \otimes \groth_{3}
+\groth_{2} \otimes \groth_{2,2}
+4\,\groth_{3} \otimes \groth_{2,1}
+\groth_{4} \otimes \groth_{2}
+\groth_{4} \otimes \groth_{1,1}
\dgbrk
+1 \otimes \groth_{4,3}
+5 \otimes \groth_{4,2,1}
+10\,\groth_{1} \otimes \groth_{4,2}
+10\,\groth_{1} \otimes \groth_{4,1,1}
+14\,\groth_{2} \otimes \groth_{4,1} \lnbrk  
+15\,\groth_{1,1} \otimes \groth_{4,1} 
+4\,\groth_{3} \otimes \groth_{4}
+12\,\groth_{2,1} \otimes \groth_{4}
-\groth_{2} \otimes \groth_{3,2} \lnbrk  
-2\,\groth_{3} \otimes \groth_{3,1}
-\groth_{4} \otimes \groth_{2,1}
\dgbrk
-2 \otimes \groth_{4,3,1}
-4\,\groth_{1} \otimes \groth_{4,3}
-1 \otimes \groth_{4,2,2}
-16\,\groth_{1} \otimes \groth_{4,2,1}
-16\,\groth_{2} \otimes \groth_{4,2} \lnbrk  
-12\,\groth_{1,1} \otimes \groth_{4,2} 
-12\,\groth_{2} \otimes \groth_{4,1,1}
-10\,\groth_{1,1} \otimes \groth_{4,1,1}
-10\,\groth_{3} \otimes \groth_{4,1} \lnbrk  
-29\,\groth_{2,1} \otimes \groth_{4,1} 
-\groth_{4} \otimes \groth_{4} 
-7\,\groth_{3,1} \otimes \groth_{4} 
-3\,\groth_{2,2} \otimes \groth_{4}
\dgbrk
+1 \otimes \groth_{4,3,2}
+6\,\groth_{1} \otimes \groth_{4,3,1}
+5\,\groth_{2} \otimes \groth_{4,3}
+3\,\groth_{1,1} \otimes \groth_{4,3}
+2\,\groth_{1} \otimes \groth_{4,2,2} \lnbrk  
+18\,\groth_{2} \otimes \groth_{4,2,1} 
+14\,\groth_{1,1} \otimes \groth_{4,2,1}
+8\,\groth_{3} \otimes \groth_{4,2}
+22\,\groth_{2,1} \otimes \groth_{4,2} \lnbrk  
+6\,\groth_{3} \otimes \groth_{4,1,1} 
+18\,\groth_{2,1} \otimes \groth_{4,1,1} 
+2\,\groth_{4} \otimes \groth_{4,1} 
+16\,\groth_{3,1} \otimes \groth_{4,1} \lnbrk  
+6\,\groth_{2,2} \otimes \groth_{4,1}
+\groth_{4,1} \otimes \groth_{4}
+3\,\groth_{3,2} \otimes \groth_{4}
\dgbrk
-2\,\groth_{1} \otimes \groth_{4,3,2}
-6\,\groth_{2} \otimes \groth_{4,3,1}
-4\,\groth_{1,1} \otimes \groth_{4,3,1}
-2\,\groth_{3} \otimes \groth_{4,3}
-5\,\groth_{2,1} \otimes \groth_{4,3} \lnbrk  
-\groth_{2} \otimes \groth_{4,2,2} 
-\groth_{1,1} \otimes \groth_{4,2,2}
-8\,\groth_{3} \otimes \groth_{4,2,1}
-24\,\groth_{2,1} \otimes \groth_{4,2,1}
-\groth_{4} \otimes \groth_{4,2}  \lnbrk  
-11\,\groth_{3,1} \otimes \groth_{4,2}
-3\,\groth_{2,2} \otimes \groth_{4,2} 
-\groth_{4} \otimes \groth_{4,1,1}
-9\,\groth_{3,1} \otimes \groth_{4,1,1} \lnbrk  
-3\,\groth_{2,2} \otimes \groth_{4,1,1}
-2\,\groth_{4,1} \otimes \groth_{4,1}
-6\,\groth_{3,2} \otimes \groth_{4,1}
\dgbrk
+\groth_{2} \otimes \groth_{4,3,2}
+\groth_{1,1} \otimes \groth_{4,3,2}
+2\,\groth_{3} \otimes \groth_{4,3,1}
+6\,\groth_{2,1} \otimes \groth_{4,3,1}
+2\,\groth_{3,1} \otimes \groth_{4,3} \lnbrk  
+\groth_{2,1} \otimes \groth_{4,2,2}
+\groth_{4} \otimes \groth_{4,2,1}
+11\,\groth_{3,1} \otimes \groth_{4,2,1}
+3\,\groth_{2,2} \otimes \groth_{4,2,1} \lnbrk  
+\groth_{4,1} \otimes \groth_{4,2}
+3\,\groth_{3,2} \otimes \groth_{4,2}
+\groth_{4,1} \otimes \groth_{4,1,1}
+3\,\groth_{3,2} \otimes \groth_{4,1,1}
\dgbrk
-\groth_{2,1} \otimes \groth_{4,3,2}
-2\,\groth_{3,1} \otimes \groth_{4,3,1}
-\groth_{4,1} \otimes \groth_{4,2,1}
-3\,\groth_{3,2} \otimes \groth_{4,2,1}
$
\vmm{1}

We note that while this expression fails to have alternating signs,
the signs are still periodic in a curious way.  In fact, the terms
$\groth_\lambda \otimes \groth_\nu$ of $P_\Omega$ displayed above are
sorted according to the lexicographic order on the partitions, with
$\nu$ taking precedence over $\lambda$, which makes the periodicity
readily visible.  Furthermore, starting from the degree 8 term, the
signs of the quiver coefficients are the opposite of the expected.  We
have also observed this phenomenon for other quiver cycles without
rational singularities, but have no explanation for it.

Our calculation also shows that $\Omega$ is the cone over a subvariety
of $\bP^{17}$ with Grothendieck class equal to 
\[ 51\, h^4 - 132\, h^5 + 70\, h^6 + 144\, h^7 - 261\, h^8 + 184\, h^9
   - 66\, h^{10} + 12\, h^{11} - h^{12}
\]
where $h$ is the class of a hyperplane.  Using Brion's result
\cite{brion:positivity}, this gives an alternative proof that $\Omega$
lacks rational singularities.

Finally, if the cohomology class of $\Omega$ is expressed in the basis
of products $s_{\mu_1}(E_1)\, s_{\mu_2}(E_2-E_1)$, then we obtain
\[\begin{split}
  [\Omega] = & \ 
  3 s_{3, 1}(E_2\!-\!E_1)
  + s_{1}(E_1) s_{3}(E_2\!-\!E_1)
  + s_{2, 2}(E_2\!-\!E_1)
  - 2 s_{1}(E_1) s_{2, 1}(E_2\!-\!E_1) \\ &
  - 2 s_{1, 1}(E_1) s_{2}(E_2\!-\!E_1)
  + s_{1, 1}(E_1) s_{1, 1}(E_2\!-\!E_1)
  + 3 s_{1, 1, 1}(E_1) s_{1}(E_2\!-\!E_1) \,.
\end{split}\] 
This illustrates that our choice of basis is essential to the
positivity conjecture.  It is also essential to the finiteness
conjecture, since in general it requires an infinite linear
combination of products $\groth_{\mu_1}(E_1) \groth_{\mu_2}(E_2-E_1)$
to express a class $\groth_\lambda(E_2-E_1\oplus E_1)$.
\end{example}


\section{Degeneracy loci}
\label{S:degloci}

This section interprets quiver coefficients as formulas for degeneracy
loci defined by quivers of vector bundles over a base variety.  We
start by summarizing some facts about equivariant $K$-theory of
schemes based on Thomason's paper \cite{thomason:equivariant*1}.

\subsection{$K$-theory} \label{S:ktheory}

Let $G$ be an algebraic group over the field $\bk$ and let $X$ be an
algebraic $G$-scheme over $\bk$.  A $G$-{\em equivariant sheaf\/} on
$X$ is a coherent $\cO_X$-module $\cF$ together with a given
isomorphism $I : a^* \cF \cong p_2^* \cF$, where $a : G \times X \to
X$ is the action and $p_2 : G \times X \to X$ is the projection.  This
isomorphism must satisfy that $(m \times \id_X)^* I = p_{23}^* I \circ
(\id_G \times a)^* I$ as morphisms of sheaves on $G \times G \times
X$, where $m$ is the group operation on $G$ and $p_{23}$ is the
projection to the last two factors of $G \times G \times X$.  A
$G$-{\em equivariant vector bundle\/} on $X$ is a locally free
$G$-equivariant sheaf of constant rank.

The {\em $G$-equivariant $K$-homology\/} of $X$ is the Grothendieck
group $K_G(X)$ generated by isomorphism classes of $G$-equivariant
sheaves, modulo relations saying that $[\cF] = [\cF'] + [\cF'']$ if
there exists a $G$-equivariant short exact sequence $0 \to \cF' \to
\cF \to \cF'' \to 0$.  The {\em $G$-equivariant $K$-cohomology\/} of
$X$ is the Grothendieck ring $K^G(X)$ of $G$-equivariant vector
bundles.  The group $K_G(X)$ is a module over the ring $K^G(X)$; both
the ring structure of $K^G(X)$ and its action on $K_G(X)$ are defined
by tensor products.  If $X$ is a non-singular variety and $G$ is a
linear algebraic group, then the implicit map $K^G(X) \to K_G(X)$ that
sends an equivariant vector bundle to its sheaf of sections is an
isomorphism \cite[Thm.~1.8]{thomason:equivariant*1}.  The equivariant
$K$-theory of a point is the ring $K^G(\pt) = \Rep(G)$ of virtual
representations of $G$.  Any $G$-equivariant map $f : X \to Y$ defines
a ring homomorphism $f^* : K^G(Y) \to K^G(X)$ given by pullback of
vector bundles.  If $f$ is flat then it also defines a pullback map
$f^* : K_G(Y) \to K_G(X)$ on Grothendieck groups.  The same is true if
$f$ is a regular embedding, in which case the pullback is given by
$f^*[\cF] = \sum_{i\geq 0} (-1)^i [\Tor^Y_i(\cO_X,\cF)]$.  A proper
equivariant map $f : X \to Y$ defines a pushforward map $f_* : K_G(X)
\to K_G(Y)$ given by $f_*[\cF] = \sum_{i \geq 0} (-1)^i [R^i f_*
\cF]$.  This pushforward map is a homomorphism of $K^G(Y)$-modules by
the projection formula.  If $\pi : E \to X$ is (the total space of) a
$G$-equivariant vector bundle, then $\pi^* : K_G(X) \to K_G(E)$ is an
isomorphism \cite[Thm.~1.7]{thomason:equivariant*1}, and we will
identify $K_G(E)$ with $K_G(X)$ using this map.  The inverse map is
pullback along any equivariant section $X \to E$.  When $G = \{e\}$ is
the trivial group, we will use the notation $K^\circ(X) =
K^{\{e\}}(X)$ and $K_\circ(X) = K_{\{e\}}(X)$ for the ordinary
$K$-theory groups of $X$.

Stable Grothendieck polynomials can be used to define $K$-theory
classes as follows.  Given a vector bundle over $X$ which can be
written as a direct sum of line bundles $\cE = \calL_1 \oplus \dots
\oplus \calL_r$, and a partition $\nu$, we define
\begin{equation} \label{E:grothbdl}
  \groth_\nu(\cE) \ = \ \groth_\nu(1-\calL_1^{-1}, \dots, 1-\calL_r^{-1})
  \ \in K^\circ(X) \,.
\end{equation}
The symmetry of $\groth_\nu$ implies that this class is a polynomial
in the exterior powers of the dual bundle $\cE^\vee$, so it is well
defined even when $\cE$ is not a direct sum of line bundles.
Furthermore, if $X$ is a $G$-scheme and $\cE$ is a $G$-equivariant
vector bundle, then (\ref{E:grothbdl}) defines a class
$\groth_\nu(\cE) \in K^G(X)$.  Given two $G$-vector bundles $\cE_1$
and $\cE_2$ we define
\begin{equation} \label{E:groth_double}
  \groth_\nu(\cE_1 - \cE_2) \ = \ \sum_{\lambda,\mu} d^\nu_{\lambda \mu}
  \, \groth_\lambda(\cE_1) \, \groth_{\mu'}(\cE_2^\vee) \ \in K^G(X) \,.
\end{equation}
This extends (\ref{E:groth_virt}).  The linear map $\Gamma \to K^G(X)$
given by $\groth_\nu \mapsto \groth_\nu(\cE_1 - \cE_2)$ is a ring
homomorphism.  The identity (\ref{E:supersym}) implies that
$\groth_\nu(\cE_1\oplus \cE_3 - \cE_2\oplus \cE_3) =
\groth_\nu(\cE_1-\cE_2)$ for any third $G$-vector bundle $\cE_3$.
Equivalently, the stable Grothendieck polynomial for $\nu$ defines a
linear operator $\groth_\nu : K^G(X) \to K^G(X)$.  Equation
(\ref{E:split}) implies that $\groth_\nu(\alpha + \beta) =
\sum_{\lambda,\mu} d^\nu_{\lambda \mu} \groth_\lambda(\alpha)
\groth_\mu(\beta)$ for all classes $\alpha,\beta \in K^G(X)$.

\subsection{Interpretations of Grothendieck classes}
\label{S:interpret}

Assume that $G$ is a connected reductive linear algebraic group
containing a $\bk$-split maximal torus $T \subset G$, i.e.\ $T \cong
(\bG_m)^r$ is defined over $\bk$.  Let $V$ be a rational
representation of $G$ and let $\Omega \subset V$ be a $G$-stable
closed subvariety.  Then the structure sheaf $\cO_\Omega$ is a
$G$-equivariant sheaf on $V$, so it defines a class $[\cO_\Omega] \in
K_G(V)$.  If we use the fact that $V$ is an equivariant vector bundle
over a point to identify $K_G(V)$ with $\Rep(G)$, then this class
agrees with the Grothendieck class of $\Omega$ defined in Section
\ref{S:gclass}.

Let $X$ be an algebraic scheme equipped with a principal $G$-bundle $P
\to X$, i.e.\ $G$ acts freely on $P$ and $X$ equals $P/G$ as a
geometric quotient \cite{mumford:geometric}.  For a $G$-variety $Y$ we
write $Y_G = P \times^G Y = (P\times Y)/G$.  We will use this notation
only when $Y$ is equivariantly embedded as a closed subvariety of a
non-singular variety, in which case it follows from
\cite[Prop.~23]{edidin.graham:equivariant} that $Y_G$ is defined as a
scheme.  Using that the category of $G$-equivariant sheaves on $P$ is
equivalent to the category of coherent $\cO_X$-modules
\cite[Thm.~6.1.4]{bosch.lutkebohmert.ea:neron}, it follows that $V_G$
is a vector bundle over $X$ with fibers isomorphic to $V$
\cite[Lemma~1]{edidin.graham:equivariant}, and the closed subscheme
$\Omega_G \subset V_G$ is a {\em translated degeneracy locus},
consisting of one copy of $\Omega$ in each fiber.  It's structure
sheaf defines a Grothendieck class $[\cO_{\Omega_G}] \in K_\circ(V_G)
= K_\circ(X)$.

More generally, let $H$ be a second algebraic group over $\bk$, and
assume that $P$ and $X$ are $H$-schemes so that the map $P \to X$ is
equivariant and the $H$-action on $P$ commutes with the $G$-action.
In this case $V_G$ is an $H$-vector bundle over $X$, and $\Omega_G$
defines an equivariant class $[\cO_{\Omega_G}] \in K_H(V_G) = K_H(X)$.
Let $\phi_G : \Rep(G) \to K^H(X)$ be the ring homomorphism defined by
$\phi_G(U) = [U_G]$ for any rational $G$-representation $U$.  The
following lemma interprets the Grothendieck class $[\cO_\Omega] \in
\Rep(G)$ as a formula for degeneracy loci.

\begin{prop} \label{P:gclass_equiv}
  The $H$-equivariant Grothendieck class of $\Omega_G \subset V_G$ is
  given by $[\cO_{\Omega_G}] = \varphi_G([\cO_\Omega]) \in K_H(X)$.
\end{prop}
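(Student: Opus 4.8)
The plan is to reduce the statement to the case of a free $(\bk[V], G)$-resolution of $\cO_\Omega$, and then to track such a resolution through the associated-bundle construction $Y \mapsto Y_G = P \times^G Y$. First I would recall from Section~\ref{S:gclass} that $\cO_\Omega$ admits a finite $G$-equivariant resolution
\[
  0 \to F_p \to F_{p-1} \to \dots \to F_0 \to \cO_\Omega \to 0
\]
by finitely generated free $(\bk[V],G)$-modules, say $F_i = \bk[V] \otimes_\bk U_i$ with $U_i$ a rational representation of $G$, and that by definition $[\cO_\Omega] = \sum_i (-1)^i [U_i] \in \Rep(G)$. Since $\varphi_G$ is a ring homomorphism (in particular additive), it suffices to show that $[\cO_{\Omega_G}] = \sum_i (-1)^i [(U_i)_G] \in K_H(X)$, i.e.\ that applying the associated-bundle functor to this equivariant resolution of sheaves on $V$ produces an $H$-equivariant resolution of $\cO_{\Omega_G}$ on $V_G$ by the vector bundles $(F_i)_G$, and that $(F_i)_G$ represents the class $[(U_i)_G] \in K_H(V_G) = K_H(X)$.

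The key steps are then as follows. (1) Using the equivalence between the category of $G$-equivariant sheaves on $P$ and the category of coherent $\cO_X$-modules (cited in the excerpt from \cite{bosch.lutkebohmert.ea:neron}), together with the fact that $V_G \to X$ is an $H$-vector bundle whose pullback to $P \times V$ is $G$-equivariantly $\cO$-trivial, one sees that the functor $Y \mapsto Y_G$ (more precisely, pushing a $G$-equivariant sheaf on $V$ up to $P \times V$ and descending along the free quotient $P \times V \to V_G$) is exact: $P \times V \to V_G$ is a principal $G$-bundle, hence faithfully flat, so descent along it is exact, and $V \leftarrow P \times V$ is flat as well. Applying this exact functor to the equivariant resolution of $\cO_\Omega$ yields an exact complex of $\cO_{V_G}$-modules
\[
  0 \to (F_p)_G \to \dots \to (F_0)_G \to \cO_{\Omega_G} \to 0 \,,
\]
since the functor carries $\cO_V$ to $\cO_{V_G}$, carries $\cO_\Omega = \bk[V]/I(\Omega)$ to $\cO_{\Omega_G}$ (this is exactly the statement that $\Omega_G = P \times^G \Omega$ has the expected scheme structure, which is the content of \cite[Prop.~23]{edidin.graham:equivariant} and \cite[Lemma~1]{edidin.graham:equivariant}), and carries the free module $F_i = \bk[V]\otimes_\bk U_i$ to $\cO_{V_G} \otimes (U_i)_G$, a locally free $H$-equivariant $\cO_{V_G}$-module. (2) Each $(F_i)_G$ is therefore a genuine $H$-equivariant vector bundle on $V_G$, so the displayed complex is a finite locally free resolution and hence computes the class of $\cO_{\Omega_G}$ in $K_H(V_G)$:
\[
  [\cO_{\Omega_G}] = \sum_{i\geq 0} (-1)^i \, [(F_i)_G] \in K_H(V_G) \,.
\]
(3) Under the identification $K_H(V_G) = K_H(X)$ given by the vector bundle projection $V_G \to X$ (pullback is an isomorphism, by \cite[Thm.~1.7]{thomason:equivariant*1}), the bundle $(F_i)_G = \cO_{V_G} \otimes (U_i)_G$ corresponds to $[(U_i)_G] = \varphi_G([U_i]) \in K^H(X)$, since $(F_i)_G$ is the pullback of the $H$-vector bundle $(U_i)_G$ on $X$. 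Summing with signs gives $[\cO_{\Omega_G}] = \sum_i (-1)^i \varphi_G([U_i]) = \varphi_G\!\big(\sum_i (-1)^i [U_i]\big) = \varphi_G([\cO_\Omega])$, as desired.

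I expect the main obstacle to be step (1): making precise that the associated-bundle construction is exact on equivariant sheaves and that it sends $\cO_\Omega$ to $\cO_{\Omega_G}$ with its natural (reduced, or at least expected) scheme structure, rather than to some thickening. The cleanest route is to work on $P \times V$, where everything is $G$-equivariantly trivial over $P$: there the pulled-back resolution is literally $\cO_P \boxtimes (F_\bull \to \cO_\Omega)$ tensored appropriately, obviously exact, and then to invoke faithfully flat descent along $P \times V \to V_G$ together with the cited results of Edidin--Graham to identify the descended objects. One should also check the routine compatibility that the $H$-action, which commutes with the $G$-action on $P$ and descends to $V_G$ and $X$, makes all the sheaves and maps in sight $H$-equivariant — this is immediate since the resolution $F_\bull$ was only $G$-equivariant and $H$ acts on the $P$-factor, so no interaction needs checking beyond functoriality. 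Everything else is formal manipulation in equivariant $K$-theory using the isomorphisms and projection formula recalled in Section~\ref{S:ktheory}.
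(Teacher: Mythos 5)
Your proposal is correct and follows essentially the same route as the paper: apply the associated-bundle construction $(-)_G$ to the finite free $(\bk[V],G)$-resolution of $\cO_\Omega$, observe that the resulting complex of $H$-equivariant locally free sheaves resolves $\cO_{\Omega_G}$ on $V_G$, and then transport to $K_H(X)$ via the identification $K_H(V_G)\cong K_H(X)$. The only cosmetic difference is that you write $F_i = \bk[V]\otimes_\bk U_i$, whereas the paper avoids assuming such a $G$-equivariant splitting by pulling back along the zero section $s:X\to V_G$ and using $s^*(\wt F_i)_G = (F_i/\fm F_i)_G$ — which is also the cleaner way to justify your step (3).
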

\begin{proof}
  A finitely generated free $(\bk[V],G)$-module $F$ corresponds to a
  $G$-equivariant vector bundle $\wt F = \Spec(\Sym^\bull F^\vee)$
  over $V$, which in turn defines the $H$-equivariant vector bundle
  $\wt F_G = P \times^G \wt F$ on $V_G$
  \cite[Lemma~1]{edidin.graham:equivariant}.  This construction
  applied to (\ref{E:kvgres}) produces an exact sequence
  \[ 0 \to (\wt F_r)_G \to (\wt F_{r-1})_G \to \dots \to 
     (\wt F_0)_G \to \cO_{\Omega_G} \to 0
  \]
  of $H$-equivariant coherent sheaves on $V_G$.  Let $s : X \to V_G$
  be the zero section.  Since the fiber of $\wt F_i$ over the origin
  of $V$ equals $F_i/\fm F_i$, it follows that $s^*(\wt F_i)_G =
  (F_i/\fm F_i)_G$.  We conclude that
  \[ [\cO_{\Omega_G}] = \sum_{i \geq 0} (-1)^i\, s^*[(\wt F_i)_G] =
    \sum_{i \geq 0} (-1)^i [(F_i/\fm F_i)_G] = \varphi_G([\cO_\Omega])
  \]
  in $K_H(X)$, as required.
\end{proof}

Write $T = (\bG_m)^r$ as a product of multiplicative groups, and
define one-dimen\-sional $T$-representations $L_1, \dots, L_r$ by $L_i
= \bk$ and $(t_1,\dots,t_r).v = t_i v$ for $v \in L_i$.  Then we have
$\Rep(T) = \Z[L_1^{\pm 1}, \dots, L_r^{\pm 1}] \subset \Z\llbracket
x_1,\dots,x_r \rrbracket$ where $x_i = 1 - L_i^{-1}$.  Since $\Rep(G)
\subset \Rep(T)$ by \cite[Cor.~II.2.7]{jantzen:representations*1}, we
may regard the class $[\cO_\Omega]$ as a power series.

The variety $\Omega \subset V$ also defines a class $[\Omega]$ in the
equivariant Chow cohomology ring $H_T^*(V)$.  If we abuse notation and
write $x_i$ also for the Chern root $c_1(L_i) \in H_T^*(\pt) =
H_T^*(V)$, then this ring is the polynomial ring $H_T^*(V) =
\Z[x_1,\dots,x_r]$ by \cite[\S15]{totaro:chow}, and the class
$[\Omega]$ coincides with the term of total degree $d =
\codim(\Omega;V)$ in the power series $[\cO_\Omega]$.  To see this, we
need Totaro's algebraic approximation of the classifying space for $T$
\cite{totaro:chow}.  Set $P = \prod_{i=1}^r \left( L_i^{\oplus d+1}
  \smallsetminus \{0\} \right)$ and $X = P/T = \prod_{i=1}^r \bP^d$.
Then $H_T^i(V) = H^i(V_T) = H^i(X)$ for $i \leq d$ by
\cite[Thm.~1.1]{totaro:chow} or
\cite[Prop.~4]{edidin.graham:equivariant}, where $V_T = P \times^T V$
and $x_i \in H_T^i(V)$ corresponds to a hyperplane class in the $i$-th
factor of $X$.  The cohomology class of $\Omega$ is defined by
$[\Omega] := [\Omega_T] \in H^d(V_T)$.  Let $\ch : K^\circ(V_T) \to
H^*(V_T) \otimes \Q$ be the Chern character, i.e.\ the ring
homomorphism defined formally by $\ch(\calL) = \exp(c_1(\calL))$ for
any line bundle $\calL$ on $V_T$ \cite[Ex.\ 
3.2.3]{fulton:intersection}.  Then we have $\ch(\varphi_T(x_i)) = 1 -
\exp(-x_i)$, so the lowest term of $[\cO_\Omega]$ agrees with the
lowest term of $\ch(\varphi_T([\cO_\Omega]))$.  Now
Proposition~\ref{P:gclass_equiv} and
\cite[Ex.~15.2.16]{fulton:intersection} imply that
$\ch(\varphi_T([\cO_\Omega])) = \ch([\cO_{\Omega_T}]) = [\Omega_T]$ +
higher terms.  This shows that $[\Omega]$ is the lowest term in
$[\cO_\Omega]$, and also that $[\cO_\Omega]$ has no non-zero terms of
degree smaller than $\codim(\Omega;V)$.

We finally prove that the Grothendieck class of $\Omega$ is uniquely
determined by the formula it provides in ordinary $K$-theory.

\begin{prop} \label{P:gclass_unique}
  The equivariant Grothendieck class of $\Omega$ is the unique virtual
  representation $[\cO_\Omega] \in \Rep(G)$ for which
  $[\cO_{\Omega_G}] = \varphi_G([\cO_\Omega]) \in K_\circ(X)$ for
  every non-singular variety $X$ and principal $G$-bundle $P \to X$.
\end{prop}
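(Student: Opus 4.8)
The plan is to show that the two conditions in Proposition~\ref{P:gclass_unique} — that $[\cO_\Omega] \in \Rep(G)$ is the equivariant Grothendieck class, and that $[\cO_{\Omega_G}] = \varphi_G([\cO_\Omega])$ in $K_\circ(X)$ for all non-singular $X$ and principal $G$-bundles $P \to X$ — pin down $[\cO_\Omega]$ uniquely. Existence of a class satisfying the displayed identity is already Proposition~\ref{P:gclass_equiv} (applied with $H$ trivial), so the only thing to prove is uniqueness: if $\alpha, \beta \in \Rep(G)$ both satisfy $\varphi_G(\alpha) = [\cO_{\Omega_G}] = \varphi_G(\beta)$ in $K_\circ(X)$ for every such $X$, then $\alpha = \beta$. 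Equivalently, setting $\gamma = \alpha - \beta$, it suffices to show that if $\gamma \in \Rep(G)$ has $\varphi_G(\gamma) = 0$ in $K_\circ(X)$ for all non-singular $X$ with a principal $G$-bundle, then $\gamma = 0$. In other words, I must exhibit a single non-singular $X$ and principal $G$-bundle $P \to X$ for which the ring homomorphism $\varphi_G : \Rep(G) \to K_\circ(X)$ is injective in a range sufficient to detect $\gamma$.

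First I would reduce to the maximal torus. Restriction $\Rep(G) \hookrightarrow \Rep(T)$ is injective by \cite[Cor.~II.2.7]{jantzen:representations*1}, and its image is the subring of Weyl-invariants; moreover a principal $T$-bundle $P \to X$ induces a principal $G$-bundle $P \times^T G \to X$ with $\varphi_G$ factoring appropriately through $\varphi_T$. So it is enough to treat $G = T = (\bG_m)^r$. Now I would invoke Totaro's algebraic approximation of the classifying space, exactly as in the discussion of $H^*_T$ preceding this proposition: fix any integer $d$, set $P = \prod_{i=1}^r (L_i^{\oplus d+1} \smallsetminus \{0\})$ and $X = P/T = \prod_{i=1}^r \bP^d$, a non-singular variety carrying the principal $T$-bundle $P \to X$. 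Under $\varphi_T : \Rep(T) = \Z[L_1^{\pm 1},\dots,L_r^{\pm 1}] \to K_\circ(X) = K_\circ(\prod \bP^d)$, each $L_i$ maps to the corresponding tautological line bundle on the $i$-th factor. The key point is that $K_\circ(\prod_{i=1}^r \bP^d) \cong \Z[t_1,\dots,t_r]/(t_1^{d+1},\dots,t_r^{d+1})$ where $t_i = 1 - [L_i^\vee]$ pulls back from the $i$-th factor, so $\varphi_T$ is the composite $\Z[L_i^{\pm 1}] \to \Z[t_i^{\pm 1}] \to \Z[t_i]/(t_i^{d+1})$; this is injective on the subspace of Laurent polynomials whose expansion $[\cO_\Omega] = \sum_{p\ge 0} (x^i_j)^p$-style rewriting has no monomial of degree exceeding $d$ in any single variable group.

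The conclusion then follows by letting $d \to \infty$: a fixed $\gamma \in \Rep(G)$, regarded as a (finite) Laurent polynomial, involves only finitely many monomials, so for $d$ large enough its image in $K_\circ(\prod \bP^d)$ determines it, whence $\varphi_T(\gamma) = 0$ forces $\gamma = 0$. The main obstacle — and the place where I would be most careful — is the precise description of $K_\circ(\prod_{i=1}^r \bP^d)$ and the verification that $\varphi_T$ really is the "truncation" map described above; this uses the projective bundle formula for $K$-theory iteratively, together with the identification (made just before Prop.~\ref{P:gclass_unique}) of $\Rep(T)$ inside $\Z\llbracket x_1,\dots,x_r\rrbracket$ via $x_i = 1 - L_i^{-1}$, so that "large $d$ kills nothing that $\gamma$ sees" becomes a genuinely rigorous statement. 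Everything else is bookkeeping: once injectivity of $\varphi_T$ (in the relevant stable sense) is in hand, combining it with Proposition~\ref{P:gclass_equiv} for existence completes the proof.
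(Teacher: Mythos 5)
Your argument is correct, but it takes a genuinely different route from the paper's. You reduce to the maximal torus (using injectivity of $\Rep(G) \hookrightarrow \Rep(T)$ and the induced bundle $P \times^T G \to X$) and then detect a nonzero $\gamma$ integrally in $K_\circ(\prod_{i=1}^r \bP^d) \cong \Z[t_1,\dots,t_r]/(t_1^{d+1},\dots,t_r^{d+1})$ for $d$ large, using that $\varphi_T$ is literally the truncation of the power-series expansion in the variables $x_i = 1 - L_i^{-1}$; since a nonzero Laurent polynomial gives a nonzero power series, some monomial survives. The paper instead works directly with a principal $G$-bundle (the Edidin--Graham approximation by full-rank $m \times (m+d)$ matrices, with $d$ the degree of the lowest nonzero term of $\alpha$), restricts to $P/T$, and applies the Chern character to land in $H^*(P/T) \otimes \Q = H^{\leq d}_T(V) \otimes \Q$, where the lowest-degree term of $\alpha$ is visibly nonzero --- reusing the diagram already set up to identify $[\Omega]$ with the lowest term of $[\cO_\Omega]$. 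Your version is more self-contained and stays integral in $K$-theory, at the cost of needing the projective bundle formula for $\prod \bP^d$ and the torus reduction; the paper's version is shorter given the preceding Chern-character discussion but only detects the leading term (which suffices). Both establish existence by citing Proposition~\ref{P:gclass_equiv}. The one point to make fully precise in your write-up is the identification of $\varphi_T(L_i)$ with $[\cO(\pm 1)]$ on the $i$-th factor and hence of $\varphi_T(x_i)$ with a nilpotent generator $t_i$; once that is checked, the ``truncation'' description and the $d \to \infty$ conclusion are rigorous.
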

\begin{proof}
  In view of Proposition~\ref{P:gclass_equiv}, it is enough to show
  that if $\alpha \neq 0 \in \Rep(G)$, then for some principal
  $G$-bundle $P \to X$ with $X$ non-singular we have
  $\varphi_G(\alpha) \neq 0 \in K_\circ(X)$.
  
  Let $d$ be the degree of the lowest non-zero term of $\alpha \in
  \Z\llbracket x_1,\dots,x_r \rrbracket$.  As in
  \cite[Lemma~9]{edidin.graham:equivariant} we embed $G$ in $\GL(m)$
  for some $m$ and let $P$ be the set of all $m \times (m+d)$ matrices
  of full rank.  Then $G$ acts freely on $P$, the quotients $X = P/G$
  and $P/T$ are non-singular varieties, and since $P$ has codimension
  $d+1$ in the vector space of all $m \times (m+d)$ matrices, it
  follows from \cite[Thm.~1.1]{totaro:chow} or
  \cite[Prop.~4]{edidin.graham:equivariant} that $H^i(P/T) = H^i_T(V)$
  for $i \leq d$.  Consider the commutative diagram
  \[ \xymatrix{
    \Rep(G) \ar[r] \ar[d]^{\varphi_G} & \Rep(T) \ar[d]^{\varphi_T} \\
    K^\circ(X) \ar[r] & K^\circ(P/T) \ar[r]^{\ch\ \ \ } & 
    H^*(P/T) \otimes \Q
    } 
  \]
  where the bottom-left map is pullback along $P/T \to P/G = X$.
  Since the image of $\alpha$ in $H^d(P/T)\otimes \Q = H^d_T(V)
  \otimes \Q$ is non-zero, we conclude that $\varphi_G(\alpha) \in
  K^\circ(X) = K_\circ(X)$ is non-zero as well.
\end{proof}

\subsection{Degeneracy loci defined by quiver cycles}

Let $V$ and $\bG$ be as in section \ref{S:quivclass}, and let $\Omega
\subset V$ be a quiver cycle.  We will use the constructions given
above to interpret the quiver coefficients of $\Omega$ in terms of
formulas for degeneracy loci.  Let $X$ be an algebraic scheme over
$\bk$ equipped with vector bundles $\cE_1, \dots, \cE_n$ of ranks
given by the dimension vector $e = (e_1, \dots, e_n)$.  Define the
bundle $\cV = \bigoplus_{a\in Q_1} \Hom(\cE_{t(a)},\cE_{h(a)})$ over
$X$.  Since the fibers of $\cV$ are isomorphic to the representation
space $V$, any quiver cycle $\Omega \subset V$ defines a translated
degeneracy locus $\wt \Omega \subset \cV$.  To be precise, let $\pi :
P \to X$ be the principal $\bG$-bundle such that $\cE_i = (E_i)_\bG =
P \times^\bG E_i$ for each $i$.  This bundle can be constructed as a
multi-frame bundle $P \subset \cE_1^{\oplus e_1} \oplus \dots \oplus
\cE_n^{\oplus e_n}$, with fibers $\pi^{-1}(x)$ consisting of lists of
bases of the fibers $\cE_i(x)$.  Then we have $\cV = V_\bG$ and $\wt
\Omega = \Omega_\bG \subset \cV$.

\begin{cor} \label{C:univlocus}
  The Grothendieck class of the translated degeneracy locus $\wt\Omega
  \subset \cV$ is given by
\[ [\cO_{\wt\Omega}] = \sum_\mu c_\mu(\Omega)\, 
   \groth_{\mu_1}(\cE_1-\cM_1) \cdots \groth_{\mu_n}(\cE_n-\cM_n) 
   \ \in K_\circ(\cV) \,,
\]
where $\cM_i = \bigoplus_{a:h(a)=i} \cE_{t(a)} = P \times^\bG M_i$.
Furthermore, the quiver coefficients for $\Omega$ are uniquely
determined by the truth of this identity for all non-singular
varieties $X$ and vector bundles $\cE_1,\dots,\cE_n$.
\end{cor}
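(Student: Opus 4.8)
The plan is to deduce both assertions directly from Proposition~\ref{P:gclass_equiv} and Proposition~\ref{P:gclass_unique} applied to the group $G = \bG$, together with the definition of quiver coefficients in Definition~\ref{D:quivcoef}. First I would recall the setup already fixed in the text: given $X$ with vector bundles $\cE_1,\dots,\cE_n$ of ranks $e_1,\dots,e_n$, the multi-frame bundle $P \subset \bigoplus_i \cE_i^{\oplus e_i}$ is a principal $\bG$-bundle over $X$ with $\cE_i = (E_i)_\bG$, and $\cV = V_\bG$, $\wt\Omega = \Omega_\bG$. Since $\varphi_\bG : \Rep(\bG) \to K^{\{e\}}(\cV) = K_\circ(\cV)$ (where here $H = \{e\}$, or more precisely we identify $K_\circ(\cV)$ with $K_\circ(X)$ via the bundle projection) is a ring homomorphism, and since $\varphi_\bG(E_i) = [\cE_i]$ and hence $\varphi_\bG(M_i) = [\cM_i]$ by the very construction of $P$, the homomorphism property gives $\varphi_\bG(\groth_{\mu_i}(E_i - M_i)) = \groth_{\mu_i}(\cE_i - \cM_i)$ for each $i$ and each partition $\mu_i$; this uses that the linear map $\Gamma \to K^G(\cV)$ sending $\groth_\nu \mapsto \groth_\nu(\cE_1 - \cE_2)$ is a ring homomorphism, as recorded after (\ref{E:groth_double}).

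Next I would apply $\varphi_\bG$ to the defining identity $[\cO_\Omega] = \sum_\mu c_\mu(\Omega)\, \groth_{\mu_1}(E_1 - M_1) \cdots \groth_{\mu_n}(E_n - M_n)$ in $\Rep(\bG)$. By Proposition~\ref{P:gclass_equiv} the left side maps to $[\cO_{\wt\Omega}] = [\cO_{\Omega_\bG}]$, while by the previous paragraph the right side maps term-by-term to $\sum_\mu c_\mu(\Omega)\, \groth_{\mu_1}(\cE_1 - \cM_1) \cdots \groth_{\mu_n}(\cE_n - \cM_n)$. One subtlety to address: the defining sum may be infinite, so I should note that it converges in the $\fm$-adic (gamma) filtration on $\Rep(\bG)$ and that $\varphi_\bG$ is continuous for the corresponding filtrations — each product $\groth_{\mu_1}(\cE_1-\cM_1)\cdots\groth_{\mu_n}(\cE_n-\cM_n)$ has lowest degree $\sum|\mu_i|$ in the topological filtration on $K_\circ(X)$, so only finitely many terms contribute in each degree and the image series is well defined. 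This yields the displayed formula for $[\cO_{\wt\Omega}]$.

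For the uniqueness statement, I would argue that if $(c'_\mu)$ is any family of integers (indexed by sequences $\mu$ with $\ell(\mu_i) \le e_i$) for which $[\cO_{\wt\Omega}] = \sum_\mu c'_\mu\, \groth_{\mu_1}(\cE_1-\cM_1)\cdots\groth_{\mu_n}(\cE_n-\cM_n)$ holds for every non-singular $X$ and every choice of $\cE_1,\dots,\cE_n$, then setting $\alpha = [\cO_\Omega] - \sum_\mu c'_\mu\, \groth_{\mu_1}(E_1-M_1)\cdots\groth_{\mu_n}(E_n-M_n) \in \Rep(\bG)$ (interpreted in the completion if necessary), Proposition~\ref{P:gclass_equiv} gives $\varphi_\bG(\alpha) = 0 \in K_\circ(X)$ for all such $X$. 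Proposition~\ref{P:gclass_unique} then forces $\alpha = 0$, and finally the uniqueness part of Proposition~\ref{P:lincomb} — that every element of $\Rep(\bG)$ has a \emph{unique} expansion in the products $\groth_{\mu_1}(E_1-M_1)\cdots\groth_{\mu_n}(E_n-M_n)$ — gives $c'_\mu = c_\mu(\Omega)$ for all $\mu$. The main obstacle is purely bookkeeping: making sure the possibly-infinite expansions and the application of $\varphi_\bG$ interact correctly, i.e.\ that $\varphi_\bG$ respects the relevant completions so that applying it term-by-term is legitimate; once that is granted, everything else is a direct invocation of the already-proved propositions.
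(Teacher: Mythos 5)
Your argument is correct and is essentially the paper's own proof: the paper likewise deduces the formula by applying $\varphi_\bG$ to the defining identity via Proposition~\ref{P:gclass_equiv}, using $\varphi_\bG(\groth_{\mu_i}(E_i-M_i)) = \groth_{\mu_i}(\cE_i-\cM_i)$, and gets uniqueness from Proposition~\ref{P:gclass_unique} together with the uniqueness in Proposition~\ref{P:lincomb}. Your extra care about the possibly infinite sums and the compatibility of $\varphi_\bG$ with the filtrations is a sensible elaboration of a point the paper leaves implicit.
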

\begin{proof}
  This follows from Proposition~\ref{P:gclass_unique} and the
  definition of quiver coefficients, since
  $\varphi_\bG(\groth_{\mu_i}(E_i - M_i)) = \groth_{\mu_i}(\cE_i -
  \cM_i)$.
\end{proof}

Define a {\em representation\/} $\cE_\bull$ of $Q$ on the vector
bundles $\cE_1,\dots,\cE_n$ over $X$ to be a collection of bundle maps
$\cE_{t(a)} \to \cE_{h(a)}$ corresponding to the arrows $a \in Q_1$.
Such a representation defines a section $s : X \to \cV$.  We define
the degeneracy locus $\Omega(\cE_\bull)$ as the scheme-theoretic
inverse image $\Omega(\cE_\bull) = s^{-1}(\wt\Omega) \subset X$.  This
degeneracy locus consists of all points in $X$ over which the bundle
maps of $\cE_\bull$ degenerate to representations in $\Omega$.  For
example, if $\wt\cE_\bull$ denotes the tautological representation of
$Q$ over $\cV$, defined by the universal maps between the pullbacks of
the vector bundles $\cE_i$ to $\cV$, then $\wt\Omega =
\Omega(\wt\cE_\bull)$.

Assume that $X$ has an action of an algebraic group $H$ over $\bk$ and
the representation $\cE_\bull$ consists of $H$-equivariant vector
bundles and bundle maps.  Then $P$ has a commuting $H$-action as in
section \ref{S:interpret} and $\cV$ is an $H$-vector bundle, so it
follows from Proposition~\ref{P:gclass_equiv} that the identity of
Corollary~\ref{C:univlocus} holds in $K_H(\cV)$.  It also follows that
$s : X \to \cV$ is an equivariant section.

We can define a localized class $\bOmega(\cE_\bull)$ in
$K_H(\Omega(\cE_\bull))$ by
\[ \bOmega(\cE_\bull) = s^!([\cO_{\wt\Omega}]) = \sum_{j\geq 0}
   (-1)^j [\Tor^\cV_j(\cO_X, \cO_{\wt \Omega})] \,.
\]
This definition is compatible with ($H$-equivariant) flat or regular
pullback and proper pushforward \cite{fulton.macpherson:categorical},
and the image of $\bOmega(\cE_\bull)$ in $K_H(X)$ is given by
\[ \bOmega(\cE_\bull) = s^*[\cO_{\wt\Omega}] = 
   \sum_\mu c_\mu(\Omega)\, \groth_{\mu_1}(\cE_1
   - \cM_1) \cdots \groth_{\mu_n}(\cE_n - \cM_n) \,.
\]
Furthermore, if $X$ and $\Omega$ are Cohen-Macaulay and the
codimension of $\Omega(\cE_\bull)$ in $X$ is equal to the codimension
of $\Omega$ in $V$, then we have $ \bOmega(\cE_\bull) =
[\cO_{\Omega(\cE_\bull)}] \in K_H(\Omega(\cE_\bull))$.  This is true
because a local regular sequence generating the ideal of $X$ in $\cV$
restricts to a local regular sequence defining the ideal of
$\Omega(\cE_\bull)$ in $\wt\Omega$ \cite[Lemma
A.7.1]{fulton:intersection}.  This implies that
$\Tor^\cV_j(\cO_X,\cO_{\wt\Omega}) = 0$ for all $j>0$, so
$\bOmega(\cE_\bull) = [\cO_X \otimes_{\cO_\cV} \cO_{\wt\Omega}] =
[\cO_{\Omega(\cE_\bull)}]$.  We note that if $Q$ is a Dynkin quiver of
type A or D and $\bk$ is algebraically closed, then any orbit closure
$\Omega \subset V$ is Cohen-Macaulay
\cite{lakshmibai.magyar:degeneracy, bobinski.zwara:normality,
  bobinski.zwara:schubert}.  The following corollary generalize all
the above formulas involving quiver coefficients, including
Definition~\ref{D:quivcoef}.

\begin{cor}
  \label{C:deglocus} 
  Let $\cE_\bull$ be a representation of $Q$ consisting of
  $H$-equivariant vector bundles and bundle maps over $X$.  Assume
  that both $X$ and $\Omega$ are Cohen-Macaulay and that the
  codimension of $\Omega(\cE_\bull)$ in $X$ is equal to the
  codimension of $\Omega$ in $V$.  Then we have
  \[ [\cO_{\Omega(\cE_\bull)}] = \sum_\mu c_\mu(\Omega)\,
     \groth_{\mu_1}(\cE_1-\cM_1) \cdots 
     \groth_{\mu_n}(\cE_n-\cM_n) \in K_H(X) \,.
  \]
\end{cor}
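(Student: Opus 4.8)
The plan is to deduce the statement from the universal formula of Corollary~\ref{C:univlocus} by pulling it back along the section defined by $\cE_\bull$, and then to convert the resulting refined class into a genuine structure-sheaf class using the Cohen--Macaulay hypothesis; essentially all the ingredients were assembled in the discussion preceding the statement, so the task is to organize them. Let $\cV = \bigoplus_{a\in Q_1}\Hom(\cE_{t(a)},\cE_{h(a)})$ with projection $\pi : \cV \to X$, let $P \to X$ be the principal $\bG$-bundle of frames with $\cE_i = P\times^\bG E_i$ (so that $\cV = V_\bG$ and $\wt\Omega = \Omega_\bG$), and let $s : X \to \cV$ be the section cut out by the bundle maps of $\cE_\bull$, so that $\Omega(\cE_\bull) = s^{-1}(\wt\Omega)$ as a scheme. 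Since the $\cE_i$ and the maps are $H$-equivariant, $P$ carries a commuting $H$-action, $\cV$ is an $H$-vector bundle, $s$ is $H$-equivariant, and Proposition~\ref{P:gclass_equiv} upgrades Corollary~\ref{C:univlocus} to an identity in $K_H(\cV)$:
\[ [\cO_{\wt\Omega}] = \sum_\mu c_\mu(\Omega)\,\groth_{\mu_1}(\cE_1-\cM_1)\cdots\groth_{\mu_n}(\cE_n-\cM_n), \]
where the bundles are understood to be pulled back to $\cV$.

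Next I would apply the pullback $s^{\ast} : K_H(\cV) \to K_H(X)$. Because $s$ is a section of the vector bundle $\pi$, this map is inverse to $\pi^{\ast}$ and is in particular a ring homomorphism; it therefore carries each pulled-back class $\groth_{\mu_i}(\cE_i-\cM_i)$ on $\cV$ back to $\groth_{\mu_i}(\cE_i-\cM_i) \in K_H(X)$, while on the left it gives $s^{\ast}[\cO_{\wt\Omega}] = \sum_{j\ge 0}(-1)^j[\Tor^\cV_j(\cO_X,\cO_{\wt\Omega})]$. Thus, with no further hypotheses, the asserted right-hand side is exactly the image in $K_H(X)$ of the refined class $\bOmega(\cE_\bull) = s^!([\cO_{\wt\Omega}]) \in K_H(\Omega(\cE_\bull))$, compatibly with the $H$-action. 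It remains only to identify $s^{\ast}[\cO_{\wt\Omega}]$ with $[\cO_{\Omega(\cE_\bull)}]$.

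This last identification is the one substantive point, and it is a Tor-vanishing statement. Working locally, the section $s$ exhibits $X$ as a regularly embedded closed subscheme of $\cV$, so the ideal of $s(X)$ in $\cO_\cV$ is generated by a regular sequence of length $\rk\cV$. The hypothesis $\codim(\Omega(\cE_\bull),X) = \codim(\Omega,V)$ forces, by a dimension count ($\dim\Omega(\cE_\bull) = \dim X - \codim(\Omega,V)$, while $\dim\wt\Omega = \dim X + \dim\Omega$), the closed subscheme $\Omega(\cE_\bull) = s(X)\cap\wt\Omega$ to have codimension exactly $\rk\cV$ in $\wt\Omega$. Since $\Omega$ and $X$ are Cohen--Macaulay, so is the associated bundle $\wt\Omega$, and hence a regular sequence of length $\rk\cV$ cutting out a subscheme of codimension $\rk\cV$ restricts from $\cO_\cV$ to a regular sequence on $\cO_{\wt\Omega}$ by the standard criterion \cite[Lemma~A.7.1]{fulton:intersection}. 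Consequently the Koszul complex resolving $\cO_X$ over $\cO_\cV$ stays exact after $\otimes_{\cO_\cV}\cO_{\wt\Omega}$, so $\Tor^\cV_j(\cO_X,\cO_{\wt\Omega}) = 0$ for $j>0$ and $\cO_X\otimes_{\cO_\cV}\cO_{\wt\Omega}$ is the scheme-theoretic inverse image $\cO_{\Omega(\cE_\bull)}$. Hence $s^{\ast}[\cO_{\wt\Omega}] = [\cO_{\Omega(\cE_\bull)}]$, which together with the previous paragraph yields the claimed formula. The main obstacle is precisely this Tor-vanishing; but once the codimension equality is recorded, it is a routine invocation of the regular-sequence restriction fact on a Cohen--Macaulay scheme.
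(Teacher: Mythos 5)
Your proposal is correct and follows essentially the same route as the paper, which proves this corollary in the discussion immediately preceding it: upgrade Corollary~\ref{C:univlocus} to $K_H(\cV)$ via Proposition~\ref{P:gclass_equiv}, pull back along the equivariant section $s$, and use the Cohen--Macaulay and codimension hypotheses together with \cite[Lemma~A.7.1]{fulton:intersection} to kill the higher Tor's and identify $s^*[\cO_{\wt\Omega}]$ with $[\cO_{\Omega(\cE_\bull)}]$. Your dimension count verifying that $\Omega(\cE_\bull)$ has codimension $\rk\cV$ in $\wt\Omega$ is a correct and slightly more explicit rendering of the same step.
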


Let $X$ be a non-singular variety.  Subject to mild conditions,
corollaries \ref{C:univlocus} and \ref{C:deglocus} have cohomological
analogues.  For a partition $\lambda = (\lambda_1, \dots, \lambda_l)$
and vector bundles $\cA$ and $\cB$ over $X$, define $s_\lambda(\cA -
\cB) = \det( h_{\lambda_i+j-i} )_{l \times l} \in H^*(X)$, where the
classes $h_i$ are defined by $\sum_{i \geq 0} h_i =
c(\cB^\vee)/c(\cA^\vee)$, and $c(\cA^\vee) = 1 - c_1(\cA) + c_2(\cA) -
\cdots$ is the total Chern class of $\cA^\vee$.

\begin{cor}
  \label{C:cohom_univlocus}
  If $X$ admits an ample line bundle or if $Q$ is a quiver of Dynkin
  type, then the Chow class of the translated degeneracy locus
  $\wt\Omega \subset \cV$ is given by
  \[ [\wt\Omega] = \sum_{\sum|\mu_i|=\codim(\Omega)}
  c_\mu(\Omega)\, s_{\mu_1}(\cE_1-\cM_1)\cdots s_{\mu_n}(\cE_n-\cM_n)
  {\,\scriptstyle\cap\,} [\cV] \ \in H_*(\cV) \,.
  \]
  Without these conditions, this identity holds in $H^*(\cV) \otimes
  \Q$.
\end{cor}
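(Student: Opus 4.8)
The plan is to deduce the cohomological identity from the $K$-theoretic formula of Corollary~\ref{C:univlocus} by passing to lowest-degree terms, exactly as $[\Omega]$ was recovered as the lowest-degree term of $[\cO_\Omega]$ in Section~\ref{S:interpret}. Since $X$ is non-singular, so is the total space $\cV$, so we have a Chern character $\ch\colon K_\circ(\cV)\to\bigoplus_k H^k(\cV)\otimes\Q$; it is a ring homomorphism, and on the class of a coherent sheaf supported in codimension $\geq k$ its degree-$k$ component is the associated cycle class. In particular, writing $d=\codim(\Omega;V)=\codim(\wt\Omega;\cV)$, the series $\ch([\cO_{\wt\Omega}])$ has no term in degree below $d$, and its degree-$d$ term is the cycle class $[\wt\Omega]$ (cf.\ Section~\ref{S:interpret} and \cite[Ex.~15.2.16, 18.3.11]{fulton:intersection}).

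Next I would apply $\ch$ to the right-hand side of Corollary~\ref{C:univlocus}. The normalization $\ch(1-\calL^{-1})=c_1(\calL)+(\text{higher degree})$, together with the fact that the lowest-degree term of $\groth_\nu$ is the Schur function $s_\nu$ (Section~\ref{S:groth}), gives $\ch(\groth_{\mu_i}(\cE_i-\cM_i))=s_{\mu_i}(\cE_i-\cM_i)+(\text{degree}>|\mu_i|)$. Multiplying these over $i$ and summing against the $c_\mu(\Omega)$, the only contributions in degree $d$ are the leading terms $\prod_i s_{\mu_i}(\cE_i-\cM_i)$ of the sequences $\mu$ with $\sum|\mu_i|=d$, since $c_\mu(\Omega)=0$ whenever $\sum|\mu_i|<d$. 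Comparing the degree-$d$ components on the two sides of Corollary~\ref{C:univlocus} then yields the asserted identity in $H^*(\cV)\otimes\Q$, with no hypothesis on $X$ or $Q$.

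To remove the denominators I would refine this argument by working with the topological filtration $F^\bull K_\circ(\cV)$ instead of with $\ch$. Both $[\cO_{\wt\Omega}]$ and the degree-$d$ truncation $\sum_{\sum|\mu_i|=d}c_\mu(\Omega)\prod_i\groth_{\mu_i}(\cE_i-\cM_i)$ lie in $F^d$, and modulo $F^{d+1}$ the first is congruent to $[\wt\Omega]$ and the second to $\sum_{\sum|\mu_i|=d}c_\mu(\Omega)\prod_i s_{\mu_i}(\cE_i-\cM_i)$, because $\groth_\nu(\cE_1-\cE_2)\equiv s_\nu(\cE_1-\cE_2)\pmod{F^{|\nu|+1}}$ while $\prod_i\groth_{\mu_i}(\cE_i-\cM_i)\in F^{d+1}$ once $\sum|\mu_i|>d$ (for a Dynkin quiver the right-hand sum is moreover finite by Section~\ref{S:formula}, so no completion is involved). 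The main obstacle is to lift this congruence in $\operatorname{gr}^d K_\circ(\cV)$ to an equality of the genuine integral cycle classes in $H^*(\cV)$, and this is exactly where the hypotheses enter: if $X$ admits an ample line bundle then $\cV$ is smooth and quasi-projective, and the integrality of the leading-term construction follows from Riemann--Roch without denominators (see \cite[Ch.~15, 18]{fulton:intersection}), the classes $s_{\mu_i}(\cE_i-\cM_i)$ being universal polynomials in Chern classes that are already defined on Totaro's finite-dimensional approximations \cite{totaro:chow}; if instead $Q$ is of Dynkin type, one uses Reineke's desingularization $\pi\colon\wt Z\to\wt\Omega$ of Section~\ref{S:resolution}, which is proper and birational, so $[\wt\Omega]=\pi_*[\wt Z]$ and $[\cO_{\wt\Omega}]=\pi_*[\cO_{\wt Z}]$, runs the leading-term comparison on the non-singular $\wt Z$ where it is the ordinary cycle map, and pushes forward. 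I expect the only delicate points to be fixing the normalization of the leading-term homomorphism and checking its compatibility with products of the classes $\groth_\nu(\cE-\cM)$; the remainder is a formal consequence of Corollary~\ref{C:univlocus} and the properties of $\groth_\nu$ collected in Section~\ref{S:groth}.
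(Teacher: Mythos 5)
Your first paragraph --- the statement with rational coefficients via the Chern character applied to Corollary~\ref{C:univlocus}, using \cite[Ex.~15.2.16]{fulton:intersection} --- is correct and is exactly how the paper obtains the $H^*(\cV)\otimes\Q$ version. The problem is the integral statement, and your proposed mechanism for it does not close. You correctly identify that the obstacle is lifting a congruence in $\operatorname{gr}^d K_\circ(\cV)=F^dK_\circ(\cV)/F^{d+1}K_\circ(\cV)$ to an equality of integral cycle classes, but the tools you invoke do not remove that obstacle. The cycle map $\CH^d(\cV)\to \operatorname{gr}^d K_\circ(\cV)$, $[W]\mapsto [\cO_W]$, is surjective with kernel annihilated by $(d-1)!$ but not injective in general; Riemann--Roch without denominators controls $\ch([\cO_W])-[W]$, it does not give injectivity. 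So from the fact that $[\cO_{\wt\Omega}]$ and $\sum_{\sum|\mu_i|=d}c_\mu(\Omega)\prod_i\groth_{\mu_i}(\cE_i-\cM_i)$ agree modulo $F^{d+1}$ you only recover the identity up to $(d-1)!$-torsion, i.e.\ nothing beyond the rational statement you already have. The same objection applies to the Dynkin branch of your argument: pushing forward from Reineke's resolution and ``taking leading terms'' still passes through $\operatorname{gr}^\bull K_\circ$ (or through Grothendieck--Riemann--Roch, which reintroduces denominators via the Todd class). A telling sanity check: as written, your filtration argument uses the hypotheses only in a vague final step, so if it worked it would prove the integral identity unconditionally --- but the paper explicitly states that the unconditional integral statement is open.

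What actually closes the integral statement is different in each case. For $X$ with an ample line bundle, one starts from the identity $[\Omega]=\sum_{\sum|\mu_i|=d}c_\mu(\Omega)\,s_{\mu_1}(E_1-M_1)\cdots s_{\mu_n}(E_n-M_n)$ in $H^*_\bG(V)$ (which is integral because $H^*_\bT(V)=\Z[x^i_j]$ is torsion-free, so the Chern-character argument of Section~\ref{S:interpret} loses nothing there) and transports it to $X$ via the classifying-space approximation as in \cite[\S 2.5]{buch.rimanyi:formula}. For $Q$ of Dynkin type, one reruns the entire pushforward computation of Section~\ref{S:formula} directly in the Chow ring, replacing Theorem~\ref{T:gysin} and the Grothendieck polynomials by the classical cohomological Gysin formula and Schur polynomials; this computes $\pi_*[\cV_{\bi,\br}]=[\wt\Omega]$ integrally as the asserted sum, with no detour through $K$-theory.
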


If $X$ has an ample line bundle, then one can deduce this statement
from the expression for $[\Omega] \in H^*_\bG(V)$ along the lines of
\cite[\S 2.5]{buch.rimanyi:formula}, and if $Q$ is of Dynkin type,
then one can replace Grothendieck polynomials with Schur polynomials
in the proof of the formula for quiver coefficients given in section
\ref{S:formula}.  The formula with rational coefficients follows from
Corollary~\ref{C:univlocus} by using the Chern character
\cite[Ex.~15.2.16]{fulton:intersection}.  If $H$ is a linear algebraic
group, then a cohomological analogue of Corollary~\ref{C:deglocus} can
be proved from Corollary~\ref{C:cohom_univlocus} by first replacing
$X$ with the Borel construction $P \times^H X$, where $P/H$ is an
algebraic approximation of the classifying space of $H$
\cite{totaro:chow, edidin.graham:equivariant}, and then applying
\cite[Prop.~7.1]{fulton:intersection}.  We leave the details to the
reader.  We expect that Corollary~\ref{C:cohom_univlocus} is true
without the assumptions, but have not found a proof.


\section{Resolution of singularities}
\label{S:resolution}

Our formula for quiver coefficients of Dynkin type is based on
Reineke's resolution of the singularities of orbit closures for Dynkin
quivers \cite{reineke:quivers}.  It will be convenient to formulate
Reineke's construction for an arbitrary quiver $Q$, together with a
representation of $Q$ on vector bundles over a base scheme $X$.

Let $X$ be an algebraic scheme over $\bk$ equipped with a
representation $\cE_\bull$ of $Q$ on vector bundles over $X$, with
$\rank(\cE_i) = e_i$.  Let $i \in Q_0$ be a quiver vertex and let $r$
be an integer with $1 \leq r \leq e_i$.  Let $\rho : Y = \Gr(e_i-r,
\cE_i) \to X$ be the Grassmann bundle of rank $r$ quotients of
$\cE_i$, with universal exact sequence $0 \to \cS \to \cE_i \to \cQ
\to 0$.  (We will avoid explicit notation for pullback of vector
bundles.)  We define the scheme $X_{i,r} = X_{i,r}(\cE_\bull)$ to be
the zero scheme $X_{i,r} = Z(\cM_i \to \cQ) \subset Y$, where $\cM_i =
\bigoplus_{a:h(a)=i} \cE_{t(a)}$ and the map $\cM_i \to \cQ$ is
obtained by composing the projection $\cE_i \to \cQ$ with the sum of
the bundle maps $\cE_j \to \cE_i$ of the representation $\cE_\bull$.
This scheme has a natural projection $\rho : X_{i,r} \to X$.  Notice
that on $X_{i,r}$, all the maps $\cE_j \to \cE_i$ can be factored
through the subbundle $\cS \subset \cE_i$.  Using the factored maps,
we obtain an {\em induced representation\/} $\cE'_\bull$ over
$X_{i,r}$ on vector bundles given by $\cE'_j = \cE_j$ for $j\neq i$
and $\cE'_i = \cS$.

More generally, let $\bi = (i_1,\dots,i_m)$ be a sequence of quiver
vertices and $\br = (r_1,\dots,r_m)$ a sequence of positive integers,
such that for each $i \in Q_0$ we have $e_i \geq \sum_{i_j=i} r_j$.
We can iterate the above construction and define
\[ X_{\bi,\br} = X_{\bi,\br}(\cE_\bull) = (\cdots (( X_{i_1,r_1})_{i_2,r_2})
   \cdots )_{i_m,r_m} \,.
\]
The variety $(X_{i_1,r_1})_{i_2,r_2}$ is constructed using the induced
representation $\cE'_\bull$ on $X_{i_1,r_1}$, etc.  Let $\pi :
X_{\bi,\br} \to X$ denote the projection.  In general, this map may
have fibers of positive dimension.

Now let $Q$ be a quiver of Dynkin type and let $\Phi^+ \subset \N^n$
be the set of positive roots for the underlying Dynkin diagram.  Here
we identify the simple roots with the unit vectors $\epsilon_i \in
\N^n$, $1 \leq i \leq n$.  According to Gabriel's classification
\cite{gabriel:unzerlegbare}, there is a unique indecomposable
representation of $Q$ with dimension vector $\alpha$ for every
positive root $\alpha \in \Phi^+$, and all indecomposable
representations have this form.  This implies that the $\bG$-orbits in
$V$ correspond to sequences $(m_\alpha) \in \N^{\Phi^+}$ for which
$\sum m_\alpha \alpha$ is equal to the dimension vector $e$.
Furthermore, since the number of orbits is finite, it follows that
every quiver cycle in $V$ is an orbit closure.

For dimension vectors $\alpha, \beta \in \N^n$, let $\euler{\alpha,
  \beta} = \sum_{i=1}^n \alpha_i \beta_i - \sum_{a \in Q_1}
\alpha_{t(a)} \beta_{h(a)}$ denote the Euler form for $Q$.  Let $\Phi'
\subset \Phi^+$ be any subset of the positive roots.  A partition
$\Phi' = \cI_1 \cup \cdots \cup \cI_s$ of this set is called {\em
  directed\/} if $\euler{\alpha, \beta} \geq 0$ for all $\alpha,\beta
\in \cI_j$, $1 \leq j \leq s$, and $\euler{\alpha,\beta} \geq 0 \geq
\euler{\beta,\alpha}$ for all $\alpha \in \cI_i$ and $\beta \in \cI_j$
with $i<j$.  A directed partition always exists because the category
of representations of $Q$ is representation-directed
\cite{ringel:tame*1}.

Let $(m_\alpha) \in \N^{\Phi^+}$ be a sequence representing an orbit
closure $\Omega \subset V$, let $\Phi' \subset \Phi^+$ be a subset
containing $\{ \alpha : m_\alpha \neq 0 \}$, and let $\Phi' = \cI_1
\cup \dots \cup \cI_s$ be a directed partition.  For each $j \in
[1,s]$, write $\sum_{\alpha \in \cI_j} m_\alpha \alpha = (p^j_1,
\dots, p^j_n) \in \N^n$.  Then let $\bi^j = (i_1,\dots,i_l)$ be any
sequence of the vertices $i \in Q_0$ for which $p^j_i \neq 0$, with no
vertices repeated, and ordered so that the tail of any arrow of $Q$
comes before the head.  Set $\br^j = (p^j_{i_1}, \dots, p^j_{i_l})$.
Finally, let $\bi$ and $\br$ be the concatenated sequences $\bi =
\bi^1 \bi^2 \cdots \bi^s$ and $\br = \br^1 \br^2 \cdots \br^s$.  We
will call any pair of sequences $(\bi,\br)$ arising in this way for a
{\em resolution pair\/} for $\Omega$.

Let $\wt E_\bull$ denote the representation of $Q$ on the vector
bundles $\wt E_i = V \times E_i$ over $V$, defined by the tautological
maps $E_{t(a)} \to E_{h(a)}$, $(\phi, y) \mapsto (\phi, \phi_a(y))$,
for $a \in Q_1$.

\begin{thm}[Reineke]
  Let $Q$ be a quiver of Dynkin type, $\Omega \subset V$ an orbit
  closure, and $(\bi,\br)$ a resolution pair for $\Omega$.  Then the
  map $\pi : V_{\bi,\br}(\wt E_\bull) \to V$ has image $\Omega$ and is
  a birational isomorphism of $V_{\bi,\br}(\wt E_\bull)$ with
  $\Omega$.
\end{thm}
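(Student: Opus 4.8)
The plan is to analyze the iterated construction $V_{\bi,\br}(\wt E_\bull)$ one step at a time and show that each step, restricted over the appropriate locus, is an isomorphism onto its image, with the composite map landing in $\Omega$. The key structural observation is that the construction is designed precisely to ``split off'' indecomposable summands in the order dictated by a directed partition. For a single step $X_{i,r} = Z(\cM_i \to \cQ) \subset \Gr(e_i-r,\cE_i)$, a point of $X_{i,r}$ over $x \in X$ is a rank-$r$ quotient $\cQ$ of $\cE_i(x)$ such that all incoming maps $\cE_j(x) \to \cE_i(x)$ vanish in $\cQ$, equivalently factor through $\cS \subset \cE_i(x)$. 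I would first verify that over the open locus in $X$ where the map $\cM_i \to \cE_i$ has rank exactly $e_i - r$ at $x$, there is a \emph{unique} such quotient (namely $\cE_i(x)/\mathrm{image}$), so $\rho : X_{i,r} \to X$ is an isomorphism over that locus.

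The second step is to interpret the whole chain representation-theoretically. Following Reineke, over a general point of $V$ lying in $\Omega$ — i.e.\ a representation isomorphic to $\bigoplus_\alpha \alpha^{\oplus m_\alpha}$ — I would show by induction on the blocks $\cI_1, \dots, \cI_s$ of the directed partition that the fiber of $\pi$ over such a point is a single reduced point. The directedness condition $\euler{\alpha,\beta} \geq 0 \geq \euler{\beta,\alpha}$ for $\alpha \in \cI_i$, $\beta \in \cI_j$, $i < j$, is exactly what guarantees that the earlier blocks can be separated off as canonical sub- or quotient-representations: the relevant $\mathrm{Hom}$ and $\mathrm{Ext}$ spaces between indecomposables in distinct blocks vanish appropriately, so the flag of subspaces being cut out by the successive zero-scheme conditions is forced. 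Concretely, after processing $\bi^1, \br^1$ one has pinned down the subrepresentation (or quotient) of dimension vector $\sum_{\alpha\in\cI_1} m_\alpha\alpha$, and the induced representation $\cE'_\bull$ on the remaining bundles is again a representation with the sum of the remaining blocks as dimension vector; one then repeats.

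Third, I would check that $V_{\bi,\br}(\wt E_\bull)$ is irreducible of the correct dimension and that $\pi$ is proper, so that ``generically one reduced point in the fiber + image contained in $\Omega$ + $\Omega$ irreducible'' upgrades to ``$\pi$ is a birational isomorphism onto $\Omega$.'' Properness is clear since each step is a projective morphism (a closed subscheme of a Grassmann bundle) and properness is stable under composition; the image is closed and $\bG$-stable, contains the dense orbit of $\Omega$ once we know a general point of $\Omega$ is hit, hence equals $\Omega$. Irreducibility and dimension count: each $X_{i,r} \to X$ has generic fiber a point over the locus described above, so $\dim V_{\bi,\br}(\wt E_\bull) = \dim V$; combined with the fact that $\pi$ factors through $\Omega$ and is dominant onto it (since the dense orbit is in the image), birationality follows.

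The main obstacle I expect is the inductive heart of the second step: proving that the directedness hypothesis genuinely forces uniqueness of the subspace/quotient flag over a general point of $\Omega$, and — more delicately — that this persists in families, i.e.\ that the zero-scheme $X_{i,r}$ is \emph{reduced and irreducible} over the relevant open set rather than merely set-theoretically a section. This requires a transversality / generic-smoothness argument for the bundle map $\cM_i \to \cQ$ on the Grassmann bundle, using that over the dense orbit the differential of this section is surjective; one must also track that the induced representation $\cE'_\bull$ on $X_{i,r}$ has the expected generic isomorphism type so the induction can proceed. Handling the general (non-generic) points of $V$ in the image — showing they still lie in $\Omega$ and not outside it — is a separate semicontinuity argument: orbit types can only degenerate under the closure, and the construction never produces a representation outside $\overline{\bG \cdot (\bigoplus \alpha^{\oplus m_\alpha})}$ because the dimension-vector bookkeeping of the successive sub/quotient filtrations is preserved.
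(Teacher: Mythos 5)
This theorem is not proved in the paper at all: it is quoted from Reineke \cite{reineke:quivers}, with only the remark that Reineke's proof covers resolution pairs built from directed partitions of any subset of $\Phi^+$ containing the support of $\Omega$. Your outline does follow the strategy of Reineke's actual proof (single-step fiber analysis, uniqueness of the compatible flag over the dense orbit via the directedness conditions, then properness plus a dimension count), but two of your steps do not hold up as written. First, the claim $\dim V_{\bi,\br}(\wt E_\bull) = \dim V$ is false unless $\Omega = V$; the correct statement is $\dim V_{\bi,\br}(\wt E_\bull) = \dim \Omega$. Your justification --- that each $X_{i,r} \to X$ has generic fiber a point over the locus where $\rank(\cM_i \to \cE_i) = e_i - r$ --- does not compute the dimension of the total space, because that locus is in general a proper determinantal subvariety of the base rather than a dense open subset, so each step drops dimension. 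Moreover, the issue you single out as the main obstacle (reducedness and irreducibility of the zero schemes, needing a transversality argument) is a non-issue: the condition that all maps into $E_i$ factor through $\cS$ is \emph{linear} in the representation $\phi$, so $Z(\cM_i \to \cQ)$ is a sub-vector-bundle of $V \times \Gr(e_i-r,E_i)$ over the Grassmann bundle, and iterating, $V_{\bi,\br}(\wt E_\bull)$ is a tower of Grassmann bundles and affine bundles over a point (this is exactly how the space is handled in the proof of Theorem~\ref{T:quivcoef}), hence automatically smooth and irreducible with a directly computable dimension. The genuine work, which your sketch omits, is the Euler-form computation showing that this dimension equals $\dim \Omega$.

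Second, your argument that the image of $\pi$ lands in $\Omega$ is not yet an argument: ``the dimension-vector bookkeeping of the successive sub/quotient filtrations is preserved'' cannot distinguish $\Omega$ from other orbit closures, since many distinct orbits admit filtrations with identical dimension-vector data. The gap is closed either by (a) observing that the image is closed, irreducible, $\bG$-stable, contains the dense orbit of $M = \bigoplus_\alpha \alpha^{\oplus m_\alpha}$ (which carries the canonical compatible flag), and has dimension at most $\dim V_{\bi,\br}(\wt E_\bull) = \dim \Omega$, hence equals $\Omega$; or by (b) the degeneration fact that a representation admitting a filtration with subquotients $N_1,\dots,N_s$ lies in the orbit closure of $\bigoplus_j N_j$, together with an identification of the subquotients produced by the construction. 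Either route requires precisely the dimension count or the $\Hom$/$\Ext$ vanishing that your sketch leaves implicit, so while the architecture matches Reineke's, the two load-bearing steps are the ones still missing.
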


We note that Reineke's paper \cite{reineke:quivers} states this
theorem only in the case where the resolution pair $(\bi,\br)$ is
constructed from a directed partition of the set of all positive roots
$\Phi^+$, but the proof covers the more general statement.

Our formula for quiver coefficients given in the next section uses a
resolution pair $(\bi,\br)$ and requires a number of steps
proportional to the common length of $\bi$ and $\br$.  It is therefore
desirable to make these sequences as short as possible.  One
reasonable choice is to take the minimal set $\Phi' = \{ \alpha :
m_\alpha \neq 0 \}$ and use the following `greedy' algorithm to
produce a shortest possible directed partition of $\Phi'$.

Define $\cI(\Phi')$ to be the (unique) largest subset of $\Phi'$ for
which every element $\alpha$ in $\cI(\Phi')$ satisfies that
$\euler{\alpha,\beta} \geq 0$ for all $\beta \in \Phi'$, and
$\euler{\beta,\alpha} \leq 0$ for all $\beta \in \Phi' \smallsetminus
\cI(\Phi')$.  This set can be constructed by starting with all roots
$\alpha \in \Phi'$ for which the first inequality holds, and then
discarding roots until the second inequality is satisfied.  Since at
least one directed partition for $\Phi'$ exists, it follows that
$\cI(\Phi') \neq \emptyset$.  We now obtain a shortest possible
directed partition of $\Phi'$ by setting $\cI_1 = \cI(\Phi')$, $\cI_2
= \cI(\Phi' \smallsetminus \cI_1)$, $\cI_3 = \cI(\Phi' \smallsetminus
(\cI_1 \cup \cI_2))$, etc.

\begin{example} \label{E:inbound}
  Let $Q = \{ 1 \to 2 \ot 3 \}$ be the quiver of type A$_3$ in which
  both arrows point toward the center.  The set of positive roots is
  $\Phi^+ = \{ \alpha_{ij} \mid 1 \leq i < j \leq 3 \}$, where
  $\alpha_{ij} = \sum_{p=i}^j \varepsilon_p$.  Given an arbitrary
  partition $\Phi^+ = \cI_1 \cup \dots \cup \cI_s$, we write
  $\eta(\alpha) = j$ for $\alpha \in \cI_j$.  The partition is
  directed if and only if $\eta(\alpha) \leq \eta(\beta)$ whenever the
  following graph has an arrow from $\alpha$ to $\beta$, and
  $\eta(\alpha) < \eta(\beta)$ when the graph has a solid arrow from
  $\alpha$ to $\beta$.
  \[\xymatrix{ & \alpha_{12} \ar@{-->}[rd] \ar[rr] & & \alpha_{33} \\
    \alpha_{22} \ar[rr] \ar@{-->}[ru] \ar@{-->}[rd] & & 
    \alpha_{13} \ar@{-->}[ru] \ar@{-->}[rd] \\
    & \alpha_{23} \ar[rr] \ar@{-->}[ru] & & \alpha_{11}}
  \]
  This graph is constructed by drawing a solid arrow from $\alpha$ to
  $\beta$ if $\euler{\beta,\alpha} < 0$, and a dotted arrow from
  $\alpha$ to $\beta$ if $\euler{\beta,\alpha} \geq 0$ and
  $\euler{\alpha,\beta} > 0$.  The shortest directed partition of the
  positive roots is $\Phi^+ = \{ \alpha_{22}, \alpha_{12}, \alpha_{23}
  \} \cup \{ \alpha_{13}, \alpha_{11}, \alpha_{33} \}$.
  
  Let $\Omega \subset V = \Hom(E_1,E_2) \oplus \Hom(E_3,E_2)$ be an
  orbit closure, corresponding to the integer sequence $(m_{ij}) \in
  \N^{\Phi^+}$ with $\sum m_{ij} \alpha_{ij} = e = (e_1,e_2,e_3)$.
  Then $\Omega$ is defined set-theoretically by
   \begin{multline*}
    \Omega = \{ (\phi_1,\phi_3) \in V \mid 
    \rank(\phi_1) \leq m_{12}+m_{13} \text{ and }
    \rank(\phi_3) \leq m_{23}+m_{13} \\ \text{ and }
    \rank(\phi_1+\phi_3 : E_1\oplus E_3 \to E_2) \leq
    m_{12}+m_{23}+m_{13} \} \,.
  \end{multline*}
  As preparation for section~\ref{S:a3}, we will work out the
  desingularization of $\Omega$ obtained from the directed partition
  $\Phi^+ = \{ \alpha_{22} \} \cup \{ \alpha_{12}, \alpha_{23},
  \alpha_{13} \} \cup \{ \alpha_{11}, \alpha_{33} \}$.  The
  corresponding resolution pair $(\bi,\br)$ is given by $\bi = (2,
  1,3,2, 1,3)$ and $\br = (m_{22},
  m_{12}+m_{13},m_{23}+m_{13},e_2-m_{22}, m_{11},m_{33})$.  Form the
  product of Grassmann varieties $P = \Gr(m_{11},E_1) \times
  \Gr(e_2-m_{22},E_2) \times \Gr(m_{33},E_3)$.  The desingularization
  of $\Omega$ defined by $(\bi,\br)$ is the variety
  \[ V_{\bi,\br}(\wt E_\bull) = \{ (S_1,S_2,S_3,\phi_1,\phi_3) \in
     P\times V \mid \phi_i(E_i) \subset S_2 \text{ and } \phi_i(S_i) =
     0 \text{ for } i=1,3 \} \,. 
  \]
\end{example}


\section{A formula for Quiver coefficients}
\label{S:formula}

Let $Q$ be an arbitrary quiver, and let $X$ be an algebraic scheme
over $\bk$ equipped with vector bundles $\cE_1,\dots,\cE_n$ such that
$\rank(\cE_i) = e_i$ for each $i$.  Over the scheme $\cV =
\bigoplus_{a\in Q_1} \Hom(\cE_{t(a)},\cE_{h(a)})$ we have a
tautological representation $\wt\cE_\bull$ of $Q$ on (the pullbacks
of) the bundles $\cE_i$.  Any pair of sequences $\bi = (i_1,\dots,i_m)
\in Q_0^m$ and $\br = (r_1,\dots,r_m) \in \N^m$, with $\sum_{i_j=i}
r_j \leq e_i$ for each $i$, defines a map $\pi :
\cV_{\bi,\br}(\wt\cE_\bull) \to \cV$.  In this section we give a
formula for coefficients $c_\mu(\bi,\br) \in \Z$, indexed by sequences
of partitions $\mu = (\mu_1,\dots,\mu_n)$ with $\ell(\mu_i) \leq e_i$,
such that
\[ \pi_* [\cO_{\cV_{\bi,\br}}] = \sum_\mu c_\mu(\bi,\br)\,
   \groth_{\mu_1}(\cE_1-\cM_1)\, \groth_{\mu_2}(\cE_2-\cM_2) \cdots
   \groth_{\mu_n}(\cE_n-\cM_n) \ \in K_\circ(\cV) \,,
\]
where $\pi_* : K_\circ(\cV_{\bi,\br}) \to K_\circ(\cV)$ is the proper
pushforward along $\pi$.  If $Q$ is a quiver of Dynkin type and
$(\bi,\br)$ is a resolution pair for an orbit closure $\Omega \subset
V$ with rational singularities, then $c_\mu(\Omega) = c_\mu(\bi,\br)$.
Our formula is stated in terms of operators on tensors of Grothendieck
polynomials which we proceed to define.

Let $i \in Q_0$ be a quiver vertex.  We let $\psi_i : \Gamma^{\otimes
  n+1} \to \Gamma^{\otimes n+1}$ denote the linear operator which
applies the coproduct $\Delta$ to the $i$-th factor and multiplies one
of the components of this coproduct to the last factor.  More
precisely, $\psi_i$ is defined by
\begin{multline*}
  \psi_i(\groth_{\mu_1} \otimes \dots \otimes \groth_{\mu_n} 
         \otimes \groth_{\lambda}) = \\
  \sum_{\sigma,\nu} \left( 
  \sum_\tau d^{\mu_i}_{\sigma \tau}\, c^\nu_{\tau \lambda} \right)\,
  \groth_{\mu_1} \otimes \dots \otimes \groth_{\mu_{i-1}} \otimes
  \groth_\sigma \otimes \groth_{\mu_{i+1}} \otimes \dots \otimes
  \groth_{\mu_n} \otimes \groth_\nu \,,
\end{multline*}
where the sum is over all partitions $\sigma$, $\tau$, and $\nu$, and
the constants $d^{\mu_i}_{\sigma \tau}$ and $c^\nu_{\tau \lambda}$ are
defined in section~\ref{S:groth}.

For integers $r,c$ with $r \geq 0$, define the linear map
$\cA_{i,r\times c} : \Gamma^{\otimes n+1} \to \Gamma^{\otimes n}$ by
\[ \cA_{i,r\times c}(\groth_{\mu_1}\otimes \dots \otimes
   \groth_{\mu_n} \otimes \groth_\nu) =
   \groth_{\mu_1} \otimes \dots \otimes \groth_{\mu_{i-1}} \otimes
   \groth_{(c)^r+\nu,\mu_i} \otimes \groth_{\mu_{i+1}} \otimes
   \dots \otimes \groth_{\mu_n}
\]
if $\ell(\nu) \leq r$, and $\cA_{i,r\times c}(\groth_{\mu_1} \otimes
\dots \otimes \groth_{\mu_n} \otimes \groth_\nu) = 0$ otherwise.  Here
$(c)^r+\nu,\mu_i$ denotes the concatenation of the integer sequence
$(c+\nu_1,\dots,c+\nu_r)$ with the partition $\mu_i$.  When this does
not result in a partition, then the Grothendieck polynomial
$\groth_{(c)^r+\nu,\mu_i}$ is defined by equation (\ref{E:intseq}).
The operator $\cA_{i,r\times c}$ will be applied with negative as well
as positive integers $c$.

Let $a_1, \dots, a_l \in Q_1$ be the arrows starting at $i$, i.e.\ 
$t(a_j) = i$ for each $j$.  Define the linear map $\Phi_{i,r}^{Q,e} :
\Gamma^{\otimes n} \to \Gamma^{\otimes n}$ by
\[ \Phi_{i,r}^{Q,e}(P) = \cA_{i,r\times c}\, \psi_{h(a_1)} \, \cdots
\, \psi_{h(a_l)} (P \otimes 1)
\]
where $c = \rank(\cM_i) - e_i + r$.

Given sequences $\bi = (i_1,\dots,i_m) \in Q_0^m$ and $\br =
(r_1,\dots,r_m) \in \N^m$ as above, we define a tensor
$P^{Q,e}_{\bi,\br} \in \Gamma^{\otimes n}$ as follows.  If $m=0$, then
we set $P^{Q,e}_{\bi,\br} = 1 \otimes \dots \otimes 1$.  Otherwise we
may assume by induction that $P^{Q,e'}_{\bi',\br'} \in \Gamma^{\otimes
  n}$ has already been defined, where $\bi' = (i_2,\dots,i_m)$ and
$\br' = (r_2,\dots,r_m)$, and $e'$ is the dimension vector defined by
$e'_j = e_j$ for $j \neq i_1$ and $e'_{i_1} = e_{i_1}-r_1$.  In this
case we set $P^{Q,e}_{\bi,\br} =
\Phi^{Q,e}_{i_1,r_1}(P^{Q,e'}_{\bi',\br'})$.  We define the
coefficients $c_\mu(\bi,\br)$ as the coefficients in the expansion
\[ P^{Q,e}_{\bi,\br} = \sum_\mu c_\mu(\bi,\br)\, \groth_{\mu_1}\otimes
   \groth_{\mu_2}\otimes \dots \otimes \groth_{\mu_n} \,.
\]
It follows from this definition that $c_\mu(\bi,\br)$ is zero unless
$\ell(\mu_i) \leq e_i$ for each $i$.

Given any element $P = \sum c_\mu\, \groth_{\mu_1} \otimes \dots
\otimes \groth_{\mu_n} \in \Gamma^{\otimes n}$ and
$\alpha_1,\dots,\alpha_n \in K^\circ(X)$, we set
$P(\alpha_1,\dots,\alpha_n) = \sum c_\mu\, \groth_{\mu_1}(\alpha_1)\,
\groth_{\mu_2}(\alpha_2) \cdots \groth_{\mu_n}(\alpha_n) \in
K^\circ(X)$.  The following theorem gives the geometric interpretation
of the coefficients $c_\mu(\bi,\br)$.

\begin{thm} \label{T:quivcoef}
  Let $\pi : \cV_{\bi,\br}(\wt\cE_\bull) \to \cV$ be the map
  associated to sequences $\bi, \br$.  Then
  $\pi_*([\cO_{\cV_{\bi,\br}}]) = P^{Q,e}_{\bi,\br}(\cE_1-\cM_1,
  \dots, \cE_n-\cM_n) \in K^\circ(\cV)$.
\end{thm}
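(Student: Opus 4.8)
The plan is to prove Theorem~\ref{T:quivcoef} by induction on the length $m$ of the sequences $\bi, \br$. The base case $m=0$ is trivial, since $\cV_{\bi,\br} = \cV$, the map $\pi$ is the identity, and $P^{Q,e}_{\bi,\br} = 1 \otimes \dots \otimes 1$, so both sides equal the structure sheaf class $[\cO_\cV]$. For the inductive step, write $\bi = (i_1, \bi')$, $\br = (r_1, \br')$, and factor $\pi = \pi' \circ \rho$, where $\rho : \cV_{\bi,\br} \to \cV_{i_1,r_1}$ is the projection onto the first blowup-type construction (built from the tautological representation $\wt\cE_\bull$ of $Q$ on $\cV$), and $\pi' : \cV_{i_1,r_1}(\wt\cE'_\bull) \to \cV$ comes from the remaining sequences applied over $\cV_{i_1,r_1}$ with the induced representation $\cE'_\bull$ (recall $\cE'_j = \cE_j$ for $j \neq i_1$ and $\cE'_{i_1} = \cS$, the tautological subbundle). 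By the inductive hypothesis applied over the base $\cV_{i_1,r_1}$ with dimension vector $e'$ and bundles $\cE'_\bull$, we have $\rho_*[\cO_{\cV_{\bi,\br}}] = P^{Q,e'}_{\bi',\br'}(\cE'_1 - \cM'_1, \dots, \cE'_n - \cM'_n)$, where $\cM'_i$ is formed from the $\cE'_j$. It then remains to push this class forward along $\pi'$ and identify the result with $\Phi^{Q,e}_{i_1,r_1}$ applied to $P^{Q,e'}_{\bi',\br'}$, evaluated at $(\cE_1 - \cM_1, \dots, \cE_n - \cM_n)$.

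So the heart of the argument is a single-step statement: for $Y = \Gr(e_i - r, \cE_i)$ with universal sequence $0 \to \cS \to \cE_i \to \cQ \to 0$, and $Z = X_{i,r} = Z(\cM_i \to \cQ) \subset Y$ with projection $\rho : Z \to X$, one must show that for any class of the form $Q(\cE'_1 - \cM'_1, \dots, \cE'_n - \cM'_n)$ with $Q \in \Gamma^{\otimes n}$ (here $\cE'_i = \cS$, $\cE'_j = \cE_j$ otherwise), the pushforward $\rho_*$ of that class equals $\bigl(\Phi^{Q,e}_{i,r}(Q)\bigr)(\cE_1 - \cM_1, \dots, \cE_n - \cM_n)$. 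I would establish this in two stages. First, handle the Grassmann bundle projection $\tau : Y \to X$: the Koszul complex of the section $\cM_i \to \cQ$ gives $[\cO_Z] = \sum_j (-1)^j [\wedge^j (\cM_i \otimes \cQ^\vee)^\vee]$ in $K^\circ(Y)$, so $\rho_* = \tau_*\bigl( \lambda_{-1}((\cM_i \otimes \cQ^\vee)^\vee) \cdot (-) \bigr)$. The effect of multiplying by this Koszul class and then applying the Gysin map $\tau_*$ of a Grassmann bundle on Grothendieck-polynomial classes is exactly the content of the corresponding computation in \cite{buch:grothendieck}; the operators $\psi_{h(a_j)}$ (coproduct on the factors indexed by the heads of arrows out of $i$, i.e.\ the vertices where $\cE_i = \cS \oplus \cQ$ gets split using \eqref{E:split}) account for rewriting the classes $\groth_{\mu_{h(a_j)}}(\cE_{h(a_j)} - \cM_{h(a_j)})$ after $\cM_{h(a_j)}$ acquires $\cS$ in place of $\cE_i$, and the operator $\cA_{i, r \times c}$ with $c = \rank(\cM_i) - e_i + r$ encodes the Gysin pushforward formula (the factorization formula \eqref{E:factor} and the integer-sequence conventions \eqref{E:intseq} being precisely what makes $\groth_{(c)^r + \nu, \mu_i}$ the right output, including when $c < 0$).

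The main obstacle is the bookkeeping in the single-step identity: one must carefully track how $\cS$ replaces $\cE_i$ as a summand inside each $\cM_{h(a)}$ for arrows $a$ out of $i$, use \eqref{E:split} and \eqref{E:supersym} to re-expand $\groth_{\mu_{h(a)}}(\cE_{h(a)} - \cM_{h(a)})$ in a form compatible with pushing down $\cQ$, and then verify that the Gysin map along the Grassmann bundle $Y \to X$, applied to the Koszul class times a product of $\groth$'s in $\cS$ and $\cQ$, produces exactly the integer-sequence Grothendieck polynomial $\groth_{(c)^r+\nu,\mu_i}(\cE_i - \cM_i)$ appearing in $\cA_{i,r\times c}$. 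This Gysin computation is a $K$-theoretic analogue of the classical fact that integrating a Schur polynomial in the tautological quotient bundle over a Grassmann bundle produces a Schur polynomial with extra rows attached; the negative-$c$ case and the extension from single Grothendieck polynomials to arbitrary integer sequences via \eqref{E:intseq} require care, but the underlying mechanism — expressing the relevant class via \eqref{E:factor} and applying the known Grassmann-bundle Gysin formula from \cite{buch:grothendieck} — is available. Once the single-step identity is in place, splicing it with the inductive hypothesis via the factorization $\pi = \pi' \circ \rho$ and functoriality of proper pushforward ($\pi_* = \pi'_* \rho_*$) completes the proof.
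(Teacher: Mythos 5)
Your plan is essentially the paper's proof: induction on $m$ via the factorization of $\pi$ through the first step, with the single-step pushforward identity (the paper's Proposition~\ref{P:Phi}) carried out exactly as you describe — the class $[\cO_{\cV_{i_1,r_1}}]=\groth_R(\cQ-\cM_{i_1})$ from the Thom--Porteous formula, the operators $\psi_{h(a_j)}$ accounting for the replacement of $\cE_{i_1}$ by $\cE'_{i_1}=\cS$ inside each $\cM_{h(a_j)}$ (Lemma~\ref{L:psi}), and the factorization formula (\ref{E:factor}) plus the Gysin formula of Theorem~\ref{T:gysin} producing the operator $\cA_{i_1,r_1\times c}$ with $c=\rank(\cM_{i_1})-e_{i_1}+r_1$. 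The one point where your write-up glosses a real step is the appeal to ``the inductive hypothesis over the base $\cV_{i_1,r_1}$'': the statement being proved concerns the tower built from the \emph{tautological} representation on a Hom-bundle, whereas the tower over $\cV_{i_1,r_1}$ is built from the induced representation on a scheme that is not of that form, so the hypothesis does not literally apply. The paper bridges this by showing that $\cV_{i_1,r_1}$ is an affine bundle over $\cV'=\bigoplus_{a\in Q_1}\Hom_{\cO_{X'}}(\cE'_{t(a)},\cE'_{h(a)})$ with $X'=\Gr(e_{i_1}-r_1,\cE_{i_1})$, and using a fiber square with flat horizontal maps to transport the inductive identity from $\cV'$ to $\cV_{i_1,r_1}$; you should include this (or prove a suitably more general statement by induction) to make the argument close.
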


\begin{cor} \label{C:dynkincoef}
  Let $Q$ be a quiver of Dynkin type, $\Omega \subset V$ an orbit
  closure, and $(\bi,\br)$ a resolution pair for $\Omega$.  If
  $\Omega$ has rational singularities then $P_\Omega =
  P^{Q,e}_{\bi,\br}$, or equivalently, the quiver coefficients of
  $\Omega$ are given by $c_\mu(\Omega) = c_\mu(\bi,\br)$.
  Furthermore, this identity is true for all cohomological quiver
  coefficients, without the assumption about rational singularities.
\end{cor}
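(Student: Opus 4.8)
The plan is to combine Reineke's theorem, Theorem~\ref{T:quivcoef}, the defining property of rational singularities, and the uniqueness of the Grothendieck‐polynomial expansion of Proposition~\ref{P:lincomb}. Write $\pi : V_{\bi,\br}(\wt E_\bull) \to V$ for the $\bG$-equivariant map of Reineke's theorem. Since $(\bi,\br)$ is a resolution pair, the source is a nonsingular $\bG$-variety and $\pi$ is a proper birational morphism onto the orbit closure $\Omega$, i.e.\ a resolution of singularities of $\Omega$.

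The first step is to prove the identity $\pi_*[\cO_{V_{\bi,\br}}] = P^{Q,e}_{\bi,\br}(E_1-M_1,\dots,E_n-M_n)$ in $\Rep(\bG) = K_\bG(V)$. Theorem~\ref{T:quivcoef} gives this over any base $X$ carrying vector bundles $\cE_i$ of ranks $e_i$; to pass to $\Rep(\bG)$ I would take $X$ to be an algebraic approximation of the classifying space of $\bG$ with principal $\bG$-bundle $P \to X$ and $\cE_i = (E_i)_\bG$, as in Section~\ref{S:interpret}. Because the Reineke construction is built from Grassmann bundles and zero schemes, it commutes with the functor $Y \mapsto Y_\bG = P\times^\bG Y$, so $\cV_{\bi,\br}(\wt\cE_\bull) = (V_{\bi,\br}(\wt E_\bull))_\bG$, the map $\pi$ over $\cV$ is induced from the one over $V$, and proper pushforward likewise commutes with this construction. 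Hence the left side of Theorem~\ref{T:quivcoef} equals $\varphi_\bG(\pi_*[\cO_{V_{\bi,\br}}])$ and the right side equals $\varphi_\bG$ applied to $P^{Q,e}_{\bi,\br}(E_1-M_1,\dots)$, using $\varphi_\bG(\groth_{\mu_i}(E_i-M_i)) = \groth_{\mu_i}(\cE_i-\cM_i)$. The injectivity established in the proof of Proposition~\ref{P:gclass_unique} then yields the claimed identity in $\Rep(\bG)$.

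Assuming next that $\Omega$ has rational singularities, the resolution $\pi$ satisfies $R^i\pi_*\cO_{V_{\bi,\br}} = 0$ for $i > 0$ by definition of rational singularities, and $\pi_*\cO_{V_{\bi,\br}} = \cO_\Omega$ because rational singularities forces $\Omega$ to be normal and a proper birational morphism onto a normal variety pushes the structure sheaf forward to the structure sheaf. Thus $\pi_*[\cO_{V_{\bi,\br}}] = [\cO_\Omega]$ in $K_\bG(V)$, and combining with the previous step gives $[\cO_\Omega] = P^{Q,e}_{\bi,\br}(E_1-M_1,\dots,E_n-M_n)$. By the uniqueness in Proposition~\ref{P:lincomb} the coefficients of such an expansion are exactly the quiver coefficients of Definition~\ref{D:quivcoef}, so $c_\mu(\Omega) = c_\mu(\bi,\br)$ for every $\mu$, i.e.\ $P_\Omega = P^{Q,e}_{\bi,\br}$.

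For the cohomological statement one drops the hypothesis on singularities and takes lowest-degree homogeneous parts, in $\Z\llbracket x^i_j \rrbracket$, of the identity $\pi_*[\cO_{V_{\bi,\br}}] = P^{Q,e}_{\bi,\br}(E_1-M_1,\dots)$ proved in the second step. The lowest-degree term of $\pi_*[\cO_{V_{\bi,\br}}]$ is $\pi_*[V_{\bi,\br}] = [\Omega] \in H^{\codim(\Omega)}_\bG(V)$, since $\pi$ is birational onto $\Omega$ (cf.\ the argument in Section~\ref{S:interpret} and \cite[Ex.~15.2.16]{fulton:intersection}), while the lowest-degree term of $P^{Q,e}_{\bi,\br}(E_1-M_1,\dots)$ is $\sum_{\sum|\mu_i|=\codim(\Omega)} c_\mu(\bi,\br)\, s_{\mu_1}(E_1-M_1)\cdots s_{\mu_n}(E_n-M_n)$, the degree being forced to equal $\codim(\Omega)$ because $[\Omega]\neq 0$. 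Comparing with the cohomological expansion of $[\Omega]$ in Definition~\ref{D:quivcoef} and using the linear independence of these Schur products (the cohomological counterpart of Proposition~\ref{P:lincomb}) gives $c_\mu(\Omega) = c_\mu(\bi,\br)$ whenever $\sum|\mu_i| = \codim(\Omega)$. I expect the main obstacle to be not a single deep step but the careful assembly of the two identifications: turning the non-equivariant Theorem~\ref{T:quivcoef} into a statement in $\Rep(\bG)$ via the Borel construction of Section~\ref{S:degloci}, and matching $\pi_*[\cO_{V_{\bi,\br}}]$ with $[\cO_\Omega]$ — which is exactly where rational singularities enters — respectively with $[\Omega]$ via birationality in the cohomological case; the equivariance bookkeeping and the leading-term analysis are the points that must be spelled out.
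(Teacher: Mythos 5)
Your proof is correct and follows essentially the same route as the paper: Reineke's theorem makes $\pi$ a resolution, rational singularities gives $\pi_*[\cO_{\cV_{\bi,\br}}]=[\cO_{\wt\Omega}]$, and the result follows by comparing Theorem~\ref{T:quivcoef} with the defining expansion of the quiver coefficients and invoking uniqueness, with the cohomological case handled by passing to lowest-degree (fundamental-class) terms. The only cosmetic difference is that you transport everything into $\Rep(\bG)$ via the classifying-space approximation before comparing, whereas the paper simply matches the two identities over an arbitrary non-singular base $X$ and uses the uniqueness statement already built into Corollary~\ref{C:univlocus}, which sidesteps the equivariance bookkeeping you flag as the main obstacle.
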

\begin{proof}
  If $X$ is a non-singular variety, then it follows from Reineke's
  theorem that $\pi : \cV_{\bi,\br}(\wt\cE_\bull) \to \wt\Omega$ is a
  desingularization of the translated degeneracy locus $\wt\Omega
  \subset \cV$.  If $\Omega$ has rational singularities, then
  $\pi_*([\cO_{\cV_{\bi,\br}}]) = [\cO_{\wt\Omega}] \in K_\circ(\cV)$,
  so the corollary follows by comparing Theorem~\ref{T:quivcoef} to
  Corollary~\ref{C:univlocus}.  Without this assumption, we still have
  $\pi_* [\cV_{\bi,\br}] = [\wt\Omega]$ in the Chow ring of $\cV$,
  which suffices to determine the cohomological quiver coefficients.
\end{proof}

\begin{remark}
  If $\Omega \subset V$ is an orbit closure of Dynkin type, then the
  quiver coefficients for $\Omega$ are identical to the quiver
  coefficients for $\wb \Omega = \Omega \times_{\Spec(\bk)} \Spec(\wb
  \bk)$, where $\wb \bk$ is an algebraic closure of $\bk$.
  Corollary~\ref{C:dynkincoef} therefore applies also if $\wb \Omega$
  has rational singularities, which has been proved for quivers of
  type A in any characteristic and for quivers of type D in
  characteristic zero \cite{lakshmibai.magyar:degeneracy,
    bobinski.zwara:normality, bobinski.zwara:schubert}.
\end{remark}

We have computed the coefficients $c_\mu(\bi,\br)$ for lots of
randomly chosen quivers $Q$ and sequences $\bi$ and $\br$, and in all
cases they had alternating signs in the following sense.

\begin{conj} \label{C:altformula}
  We have $(-1)^{\sum |\mu_i| + \sum |\mu'_i|}\, c_\mu(\bi,\br)\,
  c_{\mu'}(\bi,\br) \geq 0$ for arbitrary sequences of partitions
  $\mu$ and $\mu'$.
\end{conj}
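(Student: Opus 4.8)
The plan is to prove the statement by tracking signs through the recursive definition of the tensor $P^{Q,e}_{\bi,\br}$. Call a tensor $P=\sum_\mu c_\mu\,\groth_{\mu_1}\otimes\dots\otimes\groth_{\mu_n}\in\Gamma^{\otimes n}$ \emph{sign-coherent} if $(-1)^{\sum_i|\mu_i|}c_\mu$ has one and the same sign for every $\mu$ with $c_\mu\neq 0$; Conjecture~\ref{C:altformula} says precisely that $P^{Q,e}_{\bi,\br}$ is sign-coherent. Since the case $m=0$ is trivial ($1\otimes\dots\otimes 1$ has a single coefficient, equal to $1$) and $P^{Q,e}_{\bi,\br}=\Phi^{Q,e}_{i_1,r_1}(P^{Q,e'}_{\bi',\br'})$, it suffices to show by induction on $m$ that each operator $\Phi^{Q,e}_{i,r}(P)=\cA_{i,r\times c}\,\psi_{h(a_1)}\cdots\psi_{h(a_l)}(P\otimes 1)$ sends a sign-coherent tensor to a sign-coherent tensor. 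I would prove this factor by factor.

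The operator $P\mapsto P\otimes 1$ appends an empty partition of weight $0$, so it preserves sign-coherence verbatim. The operator $\psi_j$ sends the monomial $\groth_{\mu_1}\otimes\dots\otimes\groth_{\mu_n}\otimes\groth_\lambda$ of weight $w$ to $\sum_{\sigma,\nu}\bigl(\sum_\tau d^{\mu_j}_{\sigma\tau}c^\nu_{\tau\lambda}\bigr)\,\groth_{\mu_1}\otimes\dots\otimes\groth_\sigma\otimes\dots\otimes\groth_{\mu_n}\otimes\groth_\nu$, in which the target monomial has weight $w'=w+|\sigma|+|\nu|-|\mu_j|-|\lambda|$. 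By Theorem~\ref{T:mult} the sign of $c^\nu_{\tau\lambda}$ is $(-1)^{|\nu|-|\tau|-|\lambda|}$, and since $d^\nu_{\lambda\mu}=c^\rho_{R\,\nu}$ the sign of $d^{\mu_j}_{\sigma\tau}$ is $(-1)^{|\sigma|+|\tau|-|\mu_j|}$; hence every summand $d^{\mu_j}_{\sigma\tau}c^\nu_{\tau\lambda}$ has sign $(-1)^{|\sigma|+|\nu|-|\mu_j|-|\lambda|}=(-1)^{w'-w}$, \emph{independently of the intermediate partition $\tau$}. Consequently $\sum_\tau d^{\mu_j}_{\sigma\tau}c^\nu_{\tau\lambda}$, when nonzero, has sign $(-1)^{w'-w}$, so for any output monomial $O$ of weight $w'$ the quantity $(-1)^{w'}(\text{coefficient of }O)$ is a nonnegative-coefficient combination of the quantities $(-1)^{w}c_{\text{input}}$ over the input monomials mapping to $O$; by the inductive hypothesis these are all of one sign, so $\psi_j$ preserves sign-coherence with no change of the coherent sign. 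The same cancellation keeps this argument valid at the intermediate stages $\psi_{h(a_k)}\cdots\psi_{h(a_l)}(P\otimes 1)$, where the slot being split already carries an arbitrary partition.

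The whole difficulty is concentrated in $\cA_{i,r\times c}$, which replaces the pair $(\groth_{\mu_i},\groth_\nu)$ occupying slots $i$ and $n+1$ by the single integer-sequence Grothendieck polynomial $\groth_{(c)^r+\nu,\mu_i}$ in slot $i$, where $c=\rank\cM_i-e_i+r$ may be negative. The first step is a sign theorem for the expansion $\groth_I=\sum_\kappa a^I_\kappa\groth_\kappa$ of a finite integer sequence $I$: either $\groth_I=0$, or the numbers $(-1)^{|\kappa|}a^I_\kappa$ all share one sign $\sigma(I)$. In the \emph{stable range} $c+\nu_r\geq(\mu_i)_1$ the sequence $(c)^r+\nu,\mu_i$ is already a partition, $\sigma(I)=(-1)^{rc+|\nu|+|\mu_i|}$, and one checks that $\cA_{i,r\times c}$ shifts the weight parity by the constant $rc\bmod 2$; here the inductive step goes through exactly as in the proof of the equioriented formula in \cite{buch:grothendieck}. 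Outside the stable range the sequence must be straightened using (\ref{E:intseq}) and the rule $\groth_{J,p}=\groth_J$ for $p<0$, and the naive parity bookkeeping now fails: the lowest degree of $\groth_I$ need not have the parity of $\sum_j I_j$ (already $\groth_{(0,1)}=\groth_{(1,1)}$), so $\cA_{i,r\times c}$ a priori applies \emph{different} parity shifts to different input monomials. The crux — and the step I expect to be the main obstacle — is to show that once all contributions to a fixed output monomial are summed these shifts are forced to agree; equivalently, that $(-1)^{|\nu|+|\mu_i|}\sigma\bigl((c)^r+\nu,\mu_i\bigr)$ is independent of $(\nu,\mu_i)$ for fixed $c$ and $r$. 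I would attack this by producing a \emph{positive} combinatorial model for $|c_\mu(\bi,\br)|$: iterate the set-valued-tableau rule of Theorem~\ref{T:mult} and the coproduct, together with a tableau interpretation of the rectangle-attaching operator $\cA_{i,r\times c}$ (including the effect of a negative $c$, which should amount to removing columns rather than adjoining them), so that the sign $(-1)^{\sum_i|\mu_i|}c_\mu(\bi,\br)$ is a fixed power of $-1$ by construction; this would extend to all Dynkin types the type $\mathrm{A}_3$ formulas of Section~\ref{S:a3}.

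As a consistency check I would record a geometric partial result: when $Q$ is of Dynkin type and $(\bi,\br)$ is a resolution pair for an orbit closure $\Omega$ with rational singularities, Corollary~\ref{C:dynkincoef} identifies $c_\mu(\bi,\br)$ with $c_\mu(\Omega)$, so Conjecture~\ref{C:altformula} follows in that case from Conjecture~\ref{C:altsign}(c). The content of the present statement is that it persists for an arbitrary quiver $Q$ and arbitrary $\bi,\br$, where the map $\pi:\cV_{\bi,\br}(\wt\cE_\bull)\to\cV$ need be neither birational onto its image nor a morphism with smooth source, so $\pi_*[\cO_{\cV_{\bi,\br}}]$ carries no evident geometric positivity and the assertion must be extracted from the combinatorics of the operators $\Phi^{Q,e}_{i,r}$.
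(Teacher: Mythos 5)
What you have written is a plan, not a proof, and the step you yourself flag as ``the main obstacle'' is exactly where the content of the statement lies: this statement is posed in the paper as a conjecture (supported only by computer experiments), and no proof of it appears there, so there is nothing for your argument to be checked against except its own completeness. Your reduction to showing that each $\Phi^{Q,e}_{i,r}$ preserves sign-coherence is sensible, and your sign analysis of $\psi_j$ is correct: since $d^{\mu_j}_{\sigma\tau}$ has sign $(-1)^{|\sigma|+|\tau|-|\mu_j|}$ and $c^\nu_{\tau\lambda}$ has sign $(-1)^{|\nu|-|\tau|-|\lambda|}$, the product's sign is independent of $\tau$, so no cancellation of opposite signs can occur and $\psi_j$ (and $P\mapsto P\otimes 1$) preserve sign-coherence with a parity shift equal to the change in total weight. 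But the operator $\cA_{i,r\times c}$ is left unresolved: you would need (a) a sign-coherence theorem for the straightening expansion $\groth_I=\sum_\kappa a^I_\kappa\groth_\kappa$ of an arbitrary integer sequence $I$ built from (\ref{E:intseq}) and the deletion rule for negative entries, and (b) the statement that the induced parity shift $(-1)^{|\nu|+|\mu_i|}\sigma\bigl((c)^r+\nu,\mu_i\bigr)$ is the same for all input pairs $(\nu,\mu_i)$ contributing to the output, so that contributions to a fixed output monomial cannot cancel in sign. Your own example $\groth_{(0,1)}=\groth_{(1,1)}$ shows the naive parity bookkeeping fails, and neither (a) nor (b) is proved or reduced to anything in the paper; proposing to ``produce a positive combinatorial model'' is a restatement of the problem rather than an argument. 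So the proposal has a genuine gap and does not establish the statement.

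Two smaller points. Your conditional consistency check is fine as far as it goes, but note it rests on Conjecture~\ref{C:altsign}(c), which is also unproven, so it gives no unconditional cases beyond those covered by the explicit A$_3$ computations of Section~\ref{S:a3} (and those treat specific resolution pairs, not arbitrary $(\bi,\br)$). Also, your closing remark that the source of $\pi$ need not be smooth is off: for the tautological representation over $\cV$ the scheme $\cV_{\bi,\br}(\wt\cE_\bull)$ is an iterated tower of Grassmann bundles and affine bundles (this is how the paper proves Theorem~\ref{T:quivcoef}), hence smooth over a smooth base; the real geometric obstruction is that $\pi$ may have positive-dimensional fibers and need not be birational onto its image, so $\pi_*[\cO_{\cV_{\bi,\br}}]$ is not a priori the Grothendieck class of any variety and no Brion-type positivity applies.
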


In almost all examples that we computed, the coefficients
$c_\mu(\bi,\br)$ of lowest degree were positive.  However, we also
found examples where the lowest degree coefficients were negative, the
next degree up were positive, etc.  We speculate that in many
examples, the class $\pi_*([\cO_{\cV_{\bi,\br}}])$ has been equal to
the Grothendieck class of the image of $\pi$, which is always a quiver
cycle in $V$.  We therefore regard our verification of
Conjecture~\ref{C:altformula} as additional evidence for
Conjecture~\ref{C:altsign}.  For the proof of
Theorem~\ref{T:quivcoef}, we need the following Gysin formula from
\cite[Thm.~7.3]{buch:grothendieck}.

\begin{thm} \label{T:gysin} Let $\cF$ and $\cB$ be vector bundles on
  $X$.  Write $\rank(\cF) = s+q$ and let $\rho : \Gr(s,\cF) \to X$ be
  the Grassmann bundle of $s$-planes in $\cF$ with universal exact
  sequence $0 \to \cS \to \rho^* \cF \to \cQ \to 0$.  Let $I =
  (I_1,\dots,I_q)$ and $J = (J_1,J_2,\dots)$ be finite sequences of
  integers such that $I_j \geq \rank(\cB)$ for all $j$.  Then
  \[ \rho_*( \groth_I(\cQ - \rho^* \cB) \cdot \groth_J(\cS - \rho^*
  \cB)) = \groth_{I-(s^q),J}(\cF - \cB) \ \in K_\circ(X) \,,
  \]
  where $I-(s^q),J = (I_1-s,\dots,I_q-s,J_1,J_2,\dots)$.
\end{thm}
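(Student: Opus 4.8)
The plan is to factor the Grassmann bundle $\rho$ through a full flag bundle, reduce to iterated pushforwards along projective bundles, and treat each projective bundle using the factorization formula (\ref{E:factor}) together with the classical pushforward of line bundles on a $\bP^{m-1}$-bundle. Concretely, let $\tau : \wt{\Fl} \to \Gr(s,\cF)$ be the relative full flag bundle refining the tautological subbundle $\cS$. A complete flag of $\cF$ includes its $s$-th term, so $\wt{\Fl}$ is nothing but the full flag bundle $\phi : \wt{\Fl} \to X$ of $\cF$, with $\tau$ remembering the $s$-th term. Both $\tau$ and $\phi$ are towers of projective bundles, so $\tau_*(1)=1$ and $\tau_*\tau^*=\id$ by the projection formula; hence $\rho_*=\phi_*\circ\tau^*$. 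On $\wt{\Fl}$ the bundles $\cS$ and $\cQ$ become sums of line bundles $\calL_1,\dots,\calL_{s+q}$ with $[\cS]=\calL_1+\dots+\calL_s$ and $[\cQ]=\calL_{s+1}+\dots+\calL_{s+q}$ in $K^\circ(\wt{\Fl})$, and it remains to compute $\phi_*\big(\groth_I(\cQ-\cB)\,\groth_J(\cS-\cB)\big)$, where $\phi$ is a tower of $s+q-1$ projective bundles.

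\textbf{Step 1: the case $q=1$.} First prove the theorem for a projective bundle $\rho:\bP\to X$, with $\rank\cF=m$, $\cS$ of rank $m-1$, $\cQ$ the tautological quotient line bundle and $I=(I_1)$ a single integer with $I_1\geq\rank\cB$, so that the claim reads
\[ \rho_*\big(\groth_{I_1}(\cQ-\cB)\,\groth_J(\cS-\cB)\big)=\groth_{\,I_1-(m-1),\,J}(\cF-\cB). \]
The hypothesis $I_1\geq\rank\cB$ lets one apply the factorization formula (\ref{E:factor}) to strip a $(\rank\cB)$-column rectangle off the first index; combined with (\ref{E:groth_double}) and the projection formula, which pulls classes of $\cB$ out of $\rho_*$, this reduces the claim to the untwisted identity $\rho_*\big(\groth_a(\cQ)\,\groth_\beta(\cS)\big)=\groth_{\,a-(m-1),\,\beta}(\cF)$. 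Since $\cQ$ is a line bundle, $\groth_a(\cQ)=(1-\cQ^{-1})^a$ for $a\geq0$ and $\groth_a(\cQ)=1$ for $a<0$; expanding $\groth_\beta(\cS)=\groth_\beta(\cF-\cQ)$ via (\ref{E:split})/(\ref{E:groth_double}) into products $\groth_\gamma(\cF)\,(1-\cQ)^r$, this untwisted identity follows from the classical value of $\rho_*(\cQ^{\otimes k})$ on a projective bundle together with the recursion (\ref{E:intseq}) and the convention $\groth_{L,p}=\groth_L$ for $p<0$.

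\textbf{Step 2: iterate and reassemble.} Apply Step 1 successively along the tower $\phi:\wt{\Fl}\to X$, peeling off the line bundles $\calL_{s+q},\dots,\calL_2$ one at a time; each stage pushes forward a single projective bundle and shifts the index data according to Step 1. Induct on $m=s+q$, the base case $m=1$ being trivial. The content of the induction is the purely combinatorial statement that these successive index shifts and insertions, governed by the recursion (\ref{E:intseq}), compose to the single operation $\groth_I\otimes\groth_J\mapsto\groth_{\,I-(s^q),\,J}$ applied to $\cF-\cB$; here one uses that $d^\nu_{\lambda\mu}$ vanishes unless $|\mu|\geq|\nu|-|\lambda|$ and $\lambda,\mu\subset\nu$ to see that the coproduct expansions $\groth_\nu(\,\cdot\,-\cB)=\sum_{\lambda,\mu} d^\nu_{\lambda\mu}\,\groth_\lambda(\,\cdot\,)\,\groth_{\mu'}(\cB^\vee)$ produced at each stage telescope correctly.

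\textbf{Main obstacle.} The genuine difficulty is the interaction of the twist by $\cB$ with the iteration in Step 2: every projective-bundle pushforward naturally outputs a sum over the coproduct of the class of $\cB$, and one must show that these recombine to a single Grothendieck polynomial $\groth_{\,I-(s^q),\,J}(\cF-\cB)$. This is exactly where the hypothesis $I_j\geq\rank\cB$ is essential, through the factorization formula (\ref{E:factor}); without it the intermediate classes are genuinely more complicated and do not collapse to the stated closed form. (If one prefers, one may also base change along the flag bundle of $\cB$, using injectivity of pullback and flat base change for the Gysin maps to assume $\cB$ split, but this does not remove the need for the rectangle-stripping step.)
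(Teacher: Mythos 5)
This theorem is not proved in the paper at all: it is quoted verbatim from \cite{buch:grothendieck} (Thm.~7.3), so there is no internal argument to compare yours against. Your outline — factor $\rho$ through the full flag bundle using $\tau_*\tau^*=\id$, push forward one projective bundle at a time, and let the straightening recursion (\ref{E:intseq}) absorb the index shifts — is the natural skeleton for such a statement (and is close in spirit to how Gysin formulas of this kind are established in the cited reference), but as written it is an outline whose hard steps are asserted rather than proved.

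Concretely, two gaps. First, in Step 1 the claimed reduction ``via the projection formula'' to the untwisted identity $\rho_*(\groth_a(\cQ)\groth_\beta(\cS))=\groth_{a-(m-1),\beta}(\cF)$ does not go through as stated: after applying (\ref{E:factor}) you obtain the factor $\groth_{(b)}(\cQ-\cB)$ (with $b=\rank\cB$), which still involves $\cQ$, and the factor $\groth_J(\cS-\cB)$ still entangles $\cB$ with $\cS$; the projection formula cannot strip $\cB$ out. One must instead expand via (\ref{E:split})/(\ref{E:groth_double}), push forward term by term, and then prove that the resulting double sum of coproduct coefficients re-assembles into the single class $\groth_{I_1-(m-1),J}(\cF-\cB)$ — that recombination identity (and its compatibility with the recursion (\ref{E:intseq}) when the shifted indices are no longer partitions, or negative) is precisely the content of the theorem, and nothing in the proposal establishes it. Second, Step 2's assertion that the successive one-line-bundle pushforwards ``telescope correctly'' to the operation $\groth_I\otimes\groth_J\mapsto\groth_{I-(s^q),J}$ is exactly the same unproved combinatorial statement in iterated form; moreover it is where the hypothesis $I_j\ge\rank\cB$ must genuinely enter, whereas you invoke it only through (\ref{E:factor}), which as stated applies to partition shapes of the form $(R+\mu,\lambda)$, not to the arbitrary integer sequences $I$ allowed in the theorem (so even the first application requires a straightening argument that preserves the hypothesis). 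In short: plausible strategy, but the computations that constitute the proof are missing.
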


Consider a variety $\cV_{i,r} = Z(\cM_i \to \cQ) \subset Y =
\Gr(e_i-r,\cE_i)$ as in the previous section, where $0 \to \cS \to
\cE_i \to \cQ \to 0$ is the universal exact sequence on $Y$.  Let
$\rho : \cV_{i,r} \to \cV$ be the projection and let $\cE'_\bull$ be
the induced representation on $\cV_{i,r}$.

\begin{lemma} \label{L:psi} Let $P' \in \Gamma^{\otimes n+1}$ and set
  $P = \psi_i(P')$.  Then $P'(\alpha_1,\dots,\alpha_n,\cQ) =
  P(\alpha_1,\dots,\alpha_{i-1},\alpha_i-\cQ,\alpha_{i+1},
  \dots,\alpha_n,\cQ)$ for any elements $\alpha_1,\dots,\alpha_n \in
  K^\circ(\cV_{i,r})$.
\end{lemma}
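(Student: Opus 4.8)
The plan is to reduce, by linearity of both sides in $P'$, to the case of a single decomposable tensor $P' = \groth_{\mu_1}\otimes\cdots\otimes\groth_{\mu_n}\otimes\groth_\lambda$, and then to manipulate the evaluation
\[ P'(\alpha_1,\dots,\alpha_n,\cQ) = \groth_{\mu_1}(\alpha_1)\cdots\groth_{\mu_n}(\alpha_n)\,\groth_\lambda(\cQ) \]
by rewriting only the two factors $\groth_{\mu_i}(\alpha_i)$ and $\groth_\lambda(\cQ)$, leaving all the others untouched.

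First I would use $\alpha_i = (\alpha_i-\cQ)+\cQ$ in $K^\circ(\cV_{i,r})$ together with the additivity identity for stable Grothendieck polynomials on arbitrary $K$-theory classes (a consequence of~(\ref{E:split}), recorded in Section~\ref{S:ktheory}) to write
\[ \groth_{\mu_i}(\alpha_i) = \sum_{\sigma,\tau} d^{\mu_i}_{\sigma\tau}\,\groth_\sigma(\alpha_i-\cQ)\,\groth_\tau(\cQ)\,, \]
which is exactly the coproduct $\Delta(\groth_{\mu_i})$ evaluated with $\alpha_i-\cQ$ and $\cQ$ in its two slots. Next, since every class is a difference of vector bundle classes, the assignment $\groth_\bullet\mapsto\groth_\bullet(\cQ)$ is a ring homomorphism $\Gamma\to K^\circ(\cV_{i,r})$, so Theorem~\ref{T:mult} gives $\groth_\tau(\cQ)\,\groth_\lambda(\cQ) = \sum_\nu c^\nu_{\tau\lambda}\,\groth_\nu(\cQ)$. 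Substituting both expansions into the displayed product and collecting terms, $P'(\alpha_1,\dots,\alpha_n,\cQ)$ becomes
\[ \sum_{\sigma,\nu}\Bigl(\sum_\tau d^{\mu_i}_{\sigma\tau}\,c^\nu_{\tau\lambda}\Bigr)\,\groth_{\mu_1}(\alpha_1)\cdots\groth_\sigma(\alpha_i-\cQ)\cdots\groth_{\mu_n}(\alpha_n)\,\groth_\nu(\cQ)\,, \]
which is precisely $P(\alpha_1,\dots,\alpha_{i-1},\alpha_i-\cQ,\alpha_{i+1},\dots,\alpha_n,\cQ)$ with $P=\psi_i(P')$, by the defining formula for $\psi_i$. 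Extending back to general $P'$ by linearity then finishes the proof.

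I do not expect a genuine obstacle: the argument is a bookkeeping translation of the definition of $\psi_i$ into $K$-theory and uses nothing about the geometry of $\cV_{i,r}$ beyond the fact that $\cQ$ represents a class in $K^\circ(\cV_{i,r})$. The only point needing a moment's care is the double role played by $\cQ$ — the copy split off from $\alpha_i$ versus the copy already present in the last tensor slot — but after the expansion above both copies are combined by the single product rule of Theorem~\ref{T:mult}, so the two roles merge cleanly into the new last slot $\groth_\nu(\cQ)$.
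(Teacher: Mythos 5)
Your argument is correct and is essentially identical to the paper's proof: both write $\alpha_i=(\alpha_i-\cQ)+\cQ$, expand $\groth_{\mu_i}$ via the coefficients $d^{\mu_i}_{\sigma\tau}$ using (\ref{E:split}), multiply $\groth_\tau(\cQ)\,\groth_\lambda(\cQ)$ via Theorem~\ref{T:mult}, and match the result against the definition of $\psi_i$. No issues.
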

\begin{proof}
  For partitions $\mu_i$ and $\lambda$ we have
  $\groth_{\mu_i}(\alpha_i)\cdot \groth_\lambda(\cQ) =
  \groth_{\mu_i}(\alpha_i-\cQ+\cQ)\cdot \groth_{\lambda}(\cQ) =
  \sum_{\sigma,\tau} d^{\mu_i}_{\sigma\tau}\,
  \groth_\sigma(\alpha_i-\cQ) \cdot \groth_\tau(\cQ)\cdot
  \groth_\lambda(\cQ) = \sum_{\sigma,\tau} d^{\mu_i}_{\sigma\tau}\,
  \groth_\sigma(\alpha_i-\cQ) \sum_\nu c^\nu_{\tau\lambda}\,
  \groth_\nu(\cQ)$.
\end{proof}

\begin{prop} \label{P:Phi}
  Let $P' \in \Gamma^{\otimes n}$ and set $P = \Phi_{i,r}^{Q,e}(P)$
  and $\cM'_i = \bigoplus_{h(a)=i} \cE'_{t(a)}$.  Then $\rho_*(
  P'(\cE'_1-\cM'_1, \dots, \cE'_n-\cM'_n)) = P(\cE_1-\cM_1, \dots,
  \cE_n-\cM_n)$ in $K_\circ(\cV)$.
\end{prop}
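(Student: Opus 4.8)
The plan is to prove Proposition~\ref{P:Phi} by unwinding the definition of $\Phi_{i,r}^{Q,e}$ into its three constituent operations and tracking the corresponding geometry on $\cV_{i,r} = Z(\cM_i \to \cQ) \subset Y = \Gr(e_i-r,\cE_i)$. First I would note that $\cV_{i,r}$ sits inside the Grassmann bundle $Y$ as the zero locus of a section of $\Hom(\cM_i,\cQ)$, and since $Y \to \cV$ has relative dimension computed by the rank of $\cQ$ times the rank of $\cS$, while $\cM_i \to \cQ$ cuts out the expected codimension, $\cV_{i,r}$ is a bundle over $\cV$ whose structure sheaf pushes forward via the Koszul complex on $\Hom(\cM_i,\cQ)^\vee$. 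Concretely, $\rho_*[\cO_{\cV_{i,r}}] = (\rho_Y)_*\bigl( \lambda_{-1}(\Hom(\cM_i,\cQ)^\vee) \cap [\cO_Y]\bigr)$, where $\rho_Y : Y \to \cV$, and more generally for a class $\beta$ pulled back appropriately, $\rho_*(\beta) = (\rho_Y)_*(\beta \cdot \lambda_{-1}(\Hom(\cM_i,\cQ)^\vee))$.

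Next I would decompose the left side. On $\cV_{i,r}$ we have $\cE'_j = \cE_j$ for $j \neq i$ and $\cE'_i = \cS$, and $\cM'_j = \cM_j$ for $j \neq i$ while for each arrow $a$ with $t(a)=i$ the bundle $\cM_{h(a)}$ changes: $\cM'_{h(a)} = \cM_{h(a)} - \cE_i + \cS = \cM_{h(a)} - \cQ$ in $K$-theory (here I use that $\cE_i = \cS + \cQ$). So $P'(\cE'_1-\cM'_1,\dots,\cE'_n-\cM'_n)$ differs from $P'(\cE_1-\cM_1,\dots,\cE_n-\cM_n)$ only in the $i$-th slot, where $\cE_i - \cM_i$ is replaced by $\cS - \cM_i = (\cE_i - \cM_i) - \cQ$, and in the slots $h(a_1),\dots,h(a_l)$ for the arrows out of $i$, where $\cE_{h(a_j)} - \cM_{h(a_j)}$ is replaced by $(\cE_{h(a_j)} - \cM_{h(a_j)}) + \cQ$. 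Applying Lemma~\ref{L:psi} once for each arrow $a_1,\dots,a_l$ — in $K$-theory the $\cQ$ shifts at the heads $h(a_j)$ are produced exactly by $\psi_{h(a_1)}\cdots\psi_{h(a_l)}(P' \otimes 1)$, which is a tensor in $\Gamma^{\otimes n+1}$ whose evaluation at $(\cE_1-\cM_1,\dots,\cE_n-\cM_n,\cQ)$ equals $P'(\cE'_1-\cM'_1,\dots,\cE'_n-\cM'_n)$ with the $i$-th argument still carrying no $\cQ$ shift (so it is $\cE_i-\cM_i$, not $\cS-\cM_i$), the last argument absorbing $\cQ$. I also need the elementary fact that each $\psi_j$ and $\cA_{i,r\times c}$ commute with specialization of the Grothendieck polynomials to $K$-theory classes, which is immediate from the definitions in Section~\ref{S:groth} and the ring-homomorphism property of $\groth_\nu \mapsto \groth_\nu(\cE_1-\cE_2)$ recorded in Section~\ref{S:ktheory}.

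The remaining and central step is to identify $\rho_*$ with the operator $\cA_{i,r\times c}$ together with multiplication by $\lambda_{-1}(\Hom(\cM_i,\cQ)^\vee)$. Here I would observe that $\lambda_{-1}(\Hom(\cM_i,\cQ)^\vee) = \groth_{(c_0)^r}(\cQ - \cM_i)$ for a suitable power: indeed $\groth_{(c)^r}$ applied to a virtual bundle of the right rank is, up to the factorization identity (\ref{E:factor}) and the rectangle normalization, precisely such a top Chern/$\lambda$-class; the integer $c = \rank(\cM_i) - e_i + r$ in the definition of $\Phi_{i,r}^{Q,e}$ is engineered so that the column count matches $\rank(\cM_i)$ after the index shift $(s^q)$ in Theorem~\ref{T:gysin}. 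I would then set up Theorem~\ref{T:gysin} with $\cF = \cE_i$, $s = e_i - r$, $q = r$, and $\cB = \cM_i$, so that $\rho_{Y,*}(\groth_I(\cQ - \cM_i)\cdot \groth_J(\cS - \cM_i)) = \groth_{I - ((e_i-r)^r),J}(\cE_i - \cM_i)$. Expanding the $i$-th and last tensor slots of $\psi_{h(a_1)}\cdots\psi_{h(a_l)}(P'\otimes 1)$ as $\sum \groth_{\mu_i}(\cE_i-\cM_i)\,\groth_\nu(\cQ)$-type terms, multiplying by $\lambda_{-1}(\Hom(\cM_i,\cQ)^\vee)$ written as a $\groth_{(\ast)^r}(\cQ-\cM_i)$ factor, and collapsing $\groth_{(\ast)^r}(\cQ-\cM_i)\groth_\nu(\cQ - \cM_i)$ via Theorem~\ref{T:mult} (reverse-lattice-word count) into $\groth_{(c)^r + \nu}(\cQ - \cM_i)$ exactly reproduces the hypothesis $\ell(\nu) \le r$ and the concatenation $\groth_{(c)^r+\nu,\mu_i}$ appearing in $\cA_{i,r\times c}$ — with the index shift $-((e_i-r)^r)$ of Theorem~\ref{T:gysin} matching the arithmetic $c = \rank(\cM_i)-e_i+r$. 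The main obstacle is precisely this bookkeeping: reconciling the rectangle conventions in (\ref{E:factor}), the integer-sequence Grothendieck polynomials defined by (\ref{E:intseq}), the $d^\nu_{\lambda\mu}$ used inside $\psi_i$, and the shift in Theorem~\ref{T:gysin}, so that the three operators compose to give exactly $\rho_*$ applied to the Koszul-twisted class. Once the identification $\rho_*(\beta) = \cA_{i,r\times c}$ applied to the $\psi$-expansion of $\beta \otimes 1$ is checked on the products $\groth_I(\cQ-\cM_i)\groth_J(\cS-\cM_i)$, linearity and completeness (every class on $\cV_{i,r}$ is such a combination, by the projective-bundle-type basis for $Y$ and the Koszul presentation of $\cV_{i,r}$) finish the proof.
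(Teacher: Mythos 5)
Your strategy is the same as the paper's: represent the pushforward from $\cV_{i,r}=Z(\cM_i\to\cQ)$ to $Y=\Gr(e_i-r,\cE_i)$ by the Thom--Porteous/Koszul class $\groth_{(\rank\cM_i)^r}(\cQ-\cM_i)$, use Lemma~\ref{L:psi} to trade the shifts $[\cM'_{h(a)}]=[\cM_{h(a)}]-[\cQ]$ for the operators $\psi_{h(a_1)}\cdots\psi_{h(a_l)}$, absorb $\groth_\nu(\cQ)$ into the rectangle via the factorization identity, and finish with the Gysin formula of Theorem~\ref{T:gysin}; this is exactly how the paper argues. However, one step as you state it is wrong and would derail the computation if followed literally.

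After applying $\psi_{h(a_1)}\cdots\psi_{h(a_l)}$, the $i$-th argument is \emph{not} $\cE_i-\cM_i$; it is $\cS-\cM_i=\cE'_i-\cM'_i$. The operators $\psi_j$ only modify the head slots $j=h(a)$, and the $i$-th slot of $P'(\cE'_1-\cM'_1,\dots,\cE'_n-\cM'_n)$ is $\cS-\cM_i$ from the start (since $Q$ has no loops, $\cM'_i=\cM_i$, while $\cE'_i=\cS$). This is not cosmetic: the concatenated index $\groth_{(c)^r+\nu,\mu_i}$ in $\cA_{i,r\times c}$ arises precisely because the $\groth_{\mu_i}$ factor is evaluated at $\cS-\cM_i$ and hence enters Theorem~\ref{T:gysin} as the $\groth_J(\cS-\rho^*\cB)$ factor. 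If the $i$-th slot were $\cE_i-\cM_i$, it would be pulled back from $\cV$, the projection formula would extract it from $\rho_*$, and you would land on the product $\groth_{\mu_i}(\cE_i-\cM_i)\cdot\groth_{(c)^r+\nu}(\cE_i-\cM_i)$, which is not equal to $\groth_{(c)^r+\nu,\mu_i}(\cE_i-\cM_i)$ in general: the factorization formula (\ref{E:factor}) does not apply because $(c)^r$ is not the full rectangle $(\rank\cM_i)^{e_i}$, and $c$ may even be negative, forcing the convention (\ref{E:intseq}). You do write the correct input $\groth_J(\cS-\cM_i)$ when you quote Theorem~\ref{T:gysin}, so the fix is to make the earlier parenthetical consistent with that. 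Two smaller points: the collapse $\groth_\nu(\cQ)\cdot\groth_R(\cQ-\cM_i)=\groth_{R+\nu}(\cQ-\cM_i)$ for $\ell(\nu)\le r$ is an instance of (\ref{E:factor}) (with the absorbed factor being $\groth_\nu(\cQ)$ in one set of variables, not $\groth_\nu(\cQ-\cM_i)$), not of Theorem~\ref{T:mult}; and $\cV_{i,r}$ is not a fiber bundle over $\cV$, though nothing in the argument depends on that.
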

\begin{proof}
  For each $j \in Q_0$ we have $[\cM_j] = [\cM'_j] + p [\cQ] \in
  K^\circ(\cV_{i,r})$, where $p$ is the number of arrows from $i$ to
  $j$.  Lemma~\ref{L:psi} therefore implies that $P'(\cE'_1-\cM'_1,
  \dots, \cE'_n-\cM'_n) = P''(\cE'_1-\cM_1,\dots,\cE'_n-\cM_n,\cQ)$
  where $P'' = \psi_{h(a_1)} \cdots \psi_{h(a_l)}(P' \otimes 1)$.
    
  It follows from Example~\ref{E:porteous} that $[\cO_{\cV_{i,r}}] =
  \groth_{R}(\cQ - \cM_i)$ in $K_\circ(Y)$, where $R =
  (\rank(\cM_i)^r)$.  The pushforward of $P'(\cE'_1-\cM'_1, \dots,
  \cE'_n-\cM'_n)$ from $\cV_{i,r}$ to $Y$ is therefore equal to
  $P''(\cE'_1-\cM_1, \dots, \cE'_n-\cM_n, \cQ) \cdot \groth_{R}(\cQ)$.
  
  Let $\mu_i$ and $\nu$ be partitions.  If $\ell(\nu) > r$ then
  $\groth_\nu(\cQ) = 0$.  Otherwise it follows from the factorization
  formula (\ref{E:factor}) that $\groth_\nu(\cQ) \groth_R(\cQ-\cM_i) =
  \groth_{R+\nu}(\cQ-\cM_i)$, and Theorem~\ref{T:gysin} implies that
  $\rho'_*(\groth_{R+\nu}(\cQ-\cM_i)\cdot \groth_{\mu_i}(\cS - \cM_i)) =
  \groth_{(c)^r+\nu,\mu_i}(\cE_i-\cM_i)$, where $\rho' : Y \to \cV$ is
  the projection and $c = \rank(\cM_i) - e_i + r$.  We conclude that
  $\rho_*(P'(\cE'_1-\cM'_1, \dots, \cE'_n-\cM'_n)) = P(\cE_1-\cM_1,
  \dots, \cE_n-\cM_n)$ where $P = \cA_{i,r\times c}(P'') =
  \Phi^{Q,e}_{i,r}(P')$.
\end{proof}

\begin{proof}[Proof of Theorem~\ref{T:quivcoef}]
  Let $X' = \Gr(e_{i_1}\!-{r_1},\cE_{i_1}) \to X$ be the Grassmann
  bundle of rank ${r_1}$ quotients of $\cE_{i_1}$.  Then the bundles
  $\cE'_j$ are defined on $X'$, and $Y = \cV \times_X X'$ can be
  constructed as the bundle $\bigoplus_{a \in Q_1}
  \Hom_{\cO_{X'}}(\cE_{t(a)},\cE_{h(a)})$ over $X'$.  It follows that
  $\cV_{{i_1},{r_1}} = Z(\cM_i \to \cE_i/\cE'_i) \subset Y$ is
  isomorphic to the bundle $\bigoplus_{a \in Q_1}
  \Hom_{\cO_{X'}}(\cE_{t(a)}, \cE'_{h(a)})$, which implies that
  $\cV_{{i_1},{r_1}}$ is an affine bundle over $\cV' = \bigoplus_{a
    \in Q_1} \Hom_{\cO_{X'}}(\cE'_{t(a)}, \cE'_{h(a)})$.  We
  furthermore have a fiber square:
  \[\xymatrix{
    \cV_{\bi,\br} \ar[r] \ar[d]_\beta &
    \cV'_{\bi',\br'}(\cE'_\bull)
    \ar[d]^{\beta'} \\
    \cV_{{i_1},{r_1}} \ar[r] & \cV' }
  \]
  By induction on $m$ we know that $\beta'_*(1) =
  P^{Q,e'}_{\bi',\br'}(\cE'_1-\cM'_1, \dots, \cE'_n-\cM'_n) \in
  K_\circ(\cV')$, and since the horizontal maps are flat, this implies
  that $\beta_*([\cO_{\cV_{\bi,\br}}]) = \beta_*(1) =
  P^{Q,e'}_{\bi',\br'}(\cE'_1-\cM'_1, \dots, \cE'_n-\cM'_n) \in
  K_\circ(\cV_{i_1,r_1})$.  Proposition~\ref{P:Phi} finally
  shows that $\pi_*([\cO_{\cV_{\bi,\br}}]) =
  \rho_*(P^{Q,e'}_{\bi',\br'}(\cE'_1-\cM'_1, \dots, \cE'_n-\cM'_n)) =
  P^{Q,e}_{\bi,\br}(\cE_1-\cM_1, \dots, \cE_n-\cM_n) \in
  K_\circ(\cV)$, as required.
\end{proof}

\begin{remark}
  For applications of our formula, it would be useful to know the
  reduced equations generating the ideal of an orbit closure $\Omega
  \subset V$ for a quiver $Q$ of Dynkin type.  For example, such
  equations will result in a more explicit construction of the
  degeneracy loci $\Omega(\cE_\bull)$ defined by $\Omega$.
  
  Let $\phi \in V$ be a representation of $Q$ on the vector spaces
  $E_1, \dots, E_n$, and fix another representation $\psi =
  (\psi_a)_{a \in Q_1}$ on vector spaces $F_1, \dots, F_n$.  A
  homomorphism from $\psi$ to $\phi$ is a collection $\beta$ of linear
  maps $\beta_i : F_i \to E_i$ such that $\phi_a \beta_{t(a)} =
  \beta_{h(a)} \psi_a$ as a map from $F_{t(a)}$ to $E_{h(a)}$ for all
  $a \in Q_1$.  Let $\Hom(\psi,\phi)$ denote the vector space of all
  such homomorphisms.  Bongartz has proved in
  \cite[Prop.~3.2]{bongartz:on} that $\phi'$ belongs to the
  orbit closure $\Omega = \wb{\bG.\phi}$ if and only if $\dim
  \Hom(\psi,\phi') \geq \dim \Hom(\psi,\phi)$ for all (indecomposable)
  representations $\psi$ of $Q$.  Set $A = \bigoplus_{i \in Q_0}
  \Hom(F_i, E_i)$ and $B = \bigoplus_{a \in Q_1} \Hom(F_{t(a)},
  E_{h(a)})$, and let $\gamma_{\psi,\phi} : A \to B$ be the linear map
  given by $\gamma_{\psi,\phi}(\beta) = ( \beta_{h(a)} \psi_a - \phi_a
  \beta_{t(a)} )_{a \in Q_1}$.  Define $\rank_\psi(\phi) =
  \rank(\gamma_{\psi,\phi})$.  We then have
  \begin{equation} \label{E:ideal}
    \Omega = \{ \phi' \in V \mid \rank_\psi(\phi') \leq
    \rank_\psi(\phi) 
    \text{ $\forall$ indecomp.\ representations $\psi$ of $Q$} \} \,.
  \end{equation}
  This description of the orbit closure $\Omega$ gives rise to
  set-theoretic equations for $\Omega$ in terms of minors of the
  matrices $\gamma_{\psi,\phi}$.  It is interesting to ask if these
  equations in fact generate the ideal $I(\Omega) \subset k[V]$.  This
  has been proved for equioriented quivers of type A by Lakshmibai and
  Magyar \cite{lakshmibai.magyar:degeneracy}, but reduced equations
  for orbit closures appear to be unknown for quivers of other types.
  We have used Macaulay 2 \cite{grayson.stillman:macaulay} to check
  that minors of the matrices $\gamma_{\psi,\phi}$ in fact generate
  the ideal of the inbound A$_3$-orbit closure given by $m_{ij}=1$ for
  $1 \leq i < j \leq 3$ (see Example~\ref{E:inbound}).
  
  If $\cE_\bull$ is a representation of $Q$ on vector bundles over
  $X$, then each fixed representation $\psi$ of $Q$ defines a vector
  bundle map from $\cA = \bigoplus_{i \in Q_0} \Hom(F_i \otimes \cO_X,
  \cE_i)$ to $\cB = \bigoplus_{a \in Q_1} \Hom(F_{t(a)}\otimes \cO_X,
  \cE_{h(a)})$, and the degeneracy locus $\Omega(\cE_\bull)$ is the
  set of points $x \in X$ where the rank of this bundle map is at most
  $\rank_\psi(\phi)$ for all $\psi$.  Assuming that (\ref{E:ideal})
  gives the reduced equations of $\Omega$, this description of
  $\Omega(\cE_\bull)$ also captures its scheme structure.
\end{remark}


\section{Quiver coefficients of type A$_3$}
\label{S:a3}

In this section we prove combinatorial formulas for the
(non-equioriented) quiver coefficients of type A$_3$.  These formulas
are based on counting set-valued tableaux, and show that the
coefficients have alternating signs.

\subsection{Inbound A$_3$ quiver}

Let $Q = \{ 1 \to 2 \ot 3 \}$ be the inbound quiver of type A$_3$ from
Example~\ref{E:inbound}, and let $\Omega \subset V$ be the orbit
closure given by $(m_{ij}) \in \N^{\Phi^+}$.  For partitions
$\lambda$, $\mu$, and $\nu$, define the coefficient
\[ c_{\lambda,\mu,\nu} = \sum_{\sigma,\tau}    
   d^{(m_{33})^{m_{12}}}_{\lambda,\sigma} \,
   d^{(m_{11})^{m_{23}}}_{\tau,\nu} \, 
   c^\mu_{\sigma\tau} \,,
\]
where the sum is over all partitions $\sigma$ and $\tau$.

\begin{prop} 
  The coefficient $c_{\lambda,\mu,\nu}$ is equal to
  $(-1)^{|\lambda|+|\mu|+|\nu|-m_{33}m_{12}-m_{11}m_{23}}$ times the
  number of pairs $(\sigma,T)$ of a partition $\sigma$ contained in
  the rectangle $(m_{33})^{m_{12}}$ with $m_{12}$ rows and $m_{33}$
  columns, and a set-valued tableau $T$ whose shape is a partition
  contained in $(m_{11})^{m_{23}}$, satisfying the following
  conditions.

\begin{romenum}
\item If $\sigma$ is placed in the upper-left corner of the rectangle
  $(m_{33})^{m_{12}}$ and the 180 degree rotation of $\lambda$ is
  placed in the lower-right corner, then their union is the whole
  rectangle and their overlap is a rook-strip, i.e.\ the the overlap
  contains at most one box in any row or column.
  
\item If $T$ is placed in the upper-left corner of the rectangle
  $(m_{11})^{m_{23}}$ and the 180 degree rotation of $\nu$ is placed
  in the lower-right corner, then their union is the whole rectangle
  and their overlap is a rook-strip.
  
\item The composition $w(T)u(\sigma)$ is a reverse lattice word with
  content $\mu$ (with the terminology of Theorem~\ref{T:mult}.)
\end{romenum}
\end{prop}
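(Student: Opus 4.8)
The plan is to substitute a combinatorial formula for each of the three structure constants appearing in the definition of $c_{\lambda,\mu,\nu}$, collect the resulting signs, and recognize what is left as the number of pairs $(\sigma,T)$ in the statement. Throughout, write $\Lambda=(m_{33})^{m_{12}}$ and $P=(m_{11})^{m_{23}}$ for the two rectangles, so $|\Lambda|=m_{33}m_{12}$ and $|P|=m_{11}m_{23}$.

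The key ingredient I would establish first is a \emph{rook-strip description} of the coproduct constants $d^{R}_{\alpha\beta}$ for a rectangle $R$: namely, $d^{R}_{\alpha\beta}$ equals $(-1)^{|\alpha|+|\beta|-|R|}$ when, placing $\beta$ in the upper-left corner of $R$ and the $180^{\circ}$ rotation of $\alpha$ in the lower-right corner, their union fills $R$ and their overlap is a rook-strip, and $d^{R}_{\alpha\beta}=0$ otherwise. This description is symmetric in $\alpha,\beta$, as it must be since $\Gamma$ is cocommutative, because rotating the whole rectangle $180^{\circ}$ interchanges the two pieces. Since $d^{R}_{\alpha\beta}$ vanishes unless $\alpha,\beta\subset R$, one may take the auxiliary rectangle in the definition of $d$ to be $R$ itself, giving $d^{R}_{\alpha\beta}=c^{(R+\beta,\alpha)}_{R,R}$; Theorem~\ref{T:mult} then presents this as $(-1)^{|\alpha|+|\beta|-|R|}$ times the number of set-valued tableaux $S$ of shape $R$ for which $w(S)\,u(R)$ is a reverse lattice word of content $(R+\beta,\alpha)$, and a direct analysis of the rectangular shape $R$ shows that such an $S$ exists, and is then unique, exactly under the stated rook-strip condition. (This lemma could alternatively be quoted from \cite{buch:littlewood-richardson}.) This step is where the real work lies; everything afterward is bookkeeping.

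Granting the lemma, I would then read off that in the sum defining $c_{\lambda,\mu,\nu}$ the factor $d^{\Lambda}_{\lambda,\sigma}$ is nonzero exactly for partitions $\sigma\subset\Lambda$ satisfying condition (i), with value $(-1)^{|\lambda|+|\sigma|-|\Lambda|}$, and $d^{P}_{\tau,\nu}$ is nonzero exactly for $\tau\subset P$ satisfying condition (ii) — here $\tau$ in the role of a tableau shape — with value $(-1)^{|\tau|+|\nu|-|P|}$. For such $\sigma,\tau$, Theorem~\ref{T:mult} together with $c^{\mu}_{\sigma\tau}=c^{\mu}_{\tau\sigma}$ shows that $c^{\mu}_{\sigma\tau}$ equals $(-1)^{|\mu|-|\sigma|-|\tau|}$ times the number of set-valued tableaux $T$ of shape $\tau$ for which $w(T)\,u(\sigma)$ is a reverse lattice word of content $\mu$, i.e.\ for which condition (iii) holds. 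Multiplying the three signs, the contributions involving $|\sigma|$ and $|\tau|$ cancel and I am left with the single sign $(-1)^{|\lambda|+|\mu|+|\nu|-|\Lambda|-|P|}=(-1)^{|\lambda|+|\mu|+|\nu|-m_{33}m_{12}-m_{11}m_{23}}$, independent of $\sigma,\tau,T$.

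Finally, after pulling out this global sign, the remaining sum $\sum_{\sigma,\tau}$ counts exactly the triples $(\sigma,\tau,T)$ with $\sigma\subset\Lambda$ satisfying (i), $\tau\subset P$ satisfying (ii), and $T$ a set-valued tableau of shape $\tau$ satisfying (iii); forgetting the redundant datum $\tau=\operatorname{shape}(T)$ identifies these triples with the pairs $(\sigma,T)$ of the proposition, the hypothesis that $T$ has shape contained in $P$ together with (ii) encoding the constraint on $\tau$. This would complete the proof modulo the rook-strip lemma, and in particular exhibit $c_{\lambda,\mu,\nu}$ as a fixed sign times a non-negative count. I expect the one genuine difficulty to be the rook-strip lemma itself — specifically, carrying out the analysis of reverse lattice words attached to rectangular set-valued tableaux cleanly, or else pinning down the precise reference.
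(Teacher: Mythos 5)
Your proposal is correct and follows essentially the same route as the paper: the paper's (very terse) proof likewise combines the rook-strip evaluation $d^{R}_{\alpha\beta}=(-1)^{|\alpha|+|\beta|-|R|}$ for rectangular $R$ (quoted from \cite{buch:littlewood-richardson} rather than re-derived) with Theorem~\ref{T:mult} applied to $c^\mu_{\sigma\tau}$, and your sign bookkeeping and the identification of triples $(\sigma,\tau,T)$ with pairs $(\sigma,T)$ match it exactly. The only difference is that you spell out the cancellation of $|\sigma|$ and $|\tau|$ in the signs and sketch a proof of the rook-strip lemma, both of which the paper leaves implicit.
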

\begin{proof}
This follows from Theorem~\ref{T:mult} because
$d^{(m_{33})^{m_{12}}}_{\lambda,\sigma}$ is non-zero exactly when the
condition (i) is satisfied, in which case
$d^{(m_{33})^{m_{12}}}_{\lambda,\sigma} =
(-1)^{|\lambda|+|\sigma|-m_{33}m_{12}}$.  Notice also that (i) and
(ii) can only be satisfied if $\lambda \subset (m_{33})^{m_{12}}$ and
$\nu \subset (m_{11})^{m_{23}}$.
\end{proof}

\begin{thm} \label{T:inbound}
The quiver coefficients of the inbound quiver of type A$_3$ are given
by 
\[ P_\Omega = \sum_{\lambda,\mu,\nu} c_{\lambda,\mu,\nu}\, 
   \groth_\lambda \otimes \groth_{(m_{11}+m_{13}+m_{33})^{m_{22}},\mu}
   \otimes \groth_\nu \,.
\]
\end{thm}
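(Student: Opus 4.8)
Here is how I would approach proving Theorem~\ref{T:inbound}.

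The starting point is Corollary~\ref{C:dynkincoef}. Since the inbound $A_3$ quiver is of type A, every orbit closure for $Q$ over an algebraic closure of $\bk$ has rational singularities by \cite{bobinski.zwara:normality}, and by the remark following Corollary~\ref{C:dynkincoef} the quiver coefficients do not change under base field extension. Hence $P_\Omega = P^{Q,e}_{\bi,\br}$ for any resolution pair $(\bi,\br)$ for $\Omega$, and I would take the explicit pair $\bi = (2,1,3,2,1,3)$, $\br = (m_{22},\, m_{12}{+}m_{13},\, m_{23}{+}m_{13},\, e_2{-}m_{22},\, m_{11},\, m_{33})$ worked out in Example~\ref{E:inbound}. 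The whole proof then reduces to evaluating $P^{Q,e}_{\bi,\br}$ by applying the operator $\Phi_{i,r}$ six times, from the inside out.

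The first three applications are easy. The two innermost operators are $\Phi_{3,m_{33}}$ and $\Phi_{1,m_{11}}$, applied at dimension vectors where vertex $3$ (resp.\ $1$) has dimension exactly $r$; as $\cM_1$ and $\cM_3$ are zero the column parameter $c = \rank(\cM_i)-e_i+r$ vanishes, and since $\Delta(1)=1\otimes 1$ these operators fix $1\otimes 1\otimes 1$. The third operator $\Phi_{2,\,e_2-m_{22}}$ has no $\psi$-factor (vertex $2$ has no outgoing arrow) and parameter $c=\rank(\cE_1{\oplus}\cE_3)=m_{11}+m_{33}$, so it inserts the rectangle $R_0=(m_{11}{+}m_{33})^{\,e_2-m_{22}}$ (and $e_2-m_{22}=m_{12}{+}m_{23}{+}m_{13}$) into the middle slot, giving $1\otimes\groth_{R_0}\otimes 1$. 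Now come the three outer operators. Here $\Phi_{3,\,m_{23}+m_{13}}$ first applies $\psi_2$, replacing the middle slot by $\Delta(\groth_{R_0})=\sum d^{R_0}_{\sigma\tau}\groth_\sigma$ and pushing $\groth_\tau$ into the auxiliary last slot, then applies $\cA_{3,\,(m_{23}+m_{13})\times(-m_{33})}$, which puts $\groth_{(-m_{33})^{m_{23}+m_{13}},\,\tau}$ in the third slot when $\ell(\tau)\le m_{23}+m_{13}$ and zero otherwise; the key elementary observation is that, since $(-m)^p,\tau$ is weakly decreasing, repeated use of $\groth_{I,p}=\groth_I$ for $p<0$ (no instance of (\ref{E:intseq}) being needed) gives $\groth_{(-m)^p,\tau}=\groth_{\tau^{\circ m}}$, where $\tau^{\circ m}$ is $\tau$ with its first $m$ columns deleted. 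Symmetrically $\Phi_{1,\,m_{12}+m_{13}}$ applies $\psi_2$ (splitting $\groth_\sigma=\sum d^\sigma_{\mu\beta}$) and $\cA_{1,\,(m_{12}+m_{13})\times(-m_{11})}$, installing $\groth_{\beta^{\circ m_{11}}}$ in the first slot when $\ell(\beta)\le m_{12}+m_{13}$, and finally $\Phi_{2,m_{22}}$, with $c=\rank(\cE_1{\oplus}\cE_3)-e_2+m_{22}=m_{11}+m_{13}+m_{33}$, prepends the big rectangle $(m_{11}{+}m_{13}{+}m_{33})^{m_{22}}$ to the middle slot. Collecting the six steps yields
\[
  P_\Omega \;=\; \sum_{\sigma,\tau,\mu,\beta}\; d^{R_0}_{\sigma\tau}\,d^{\sigma}_{\mu\beta}\;
  \groth_{\beta^{\circ m_{11}}}\otimes\groth_{(m_{11}+m_{13}+m_{33})^{m_{22}},\,\mu}\otimes\groth_{\tau^{\circ m_{33}}},
\]
the sum restricted to $\ell(\tau)\le m_{23}+m_{13}$ and $\ell(\beta)\le m_{12}+m_{13}$.

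The remaining and hardest step is to collect the terms with a fixed $\groth_\lambda\otimes\groth_{(m_{11}+m_{13}+m_{33})^{m_{22}},\mu}\otimes\groth_\nu$ and identify the coefficient with $c_{\lambda,\mu,\nu}$. This is a purely algebraic identity in the bialgebra $\Gamma$ of section~\ref{S:groth}: summing $d^{\sigma}_{\mu\beta}$ over the $\beta$ with $\beta^{\circ m_{11}}=\lambda$ and $\ell(\beta)\le m_{12}+m_{13}$ should, via the definition $d^{\nu}_{\cdot\,\cdot}=c^{(R+\cdot,\cdot)}_{R\,\nu}$ and the factorization formula (\ref{E:factor}) applied to a rectangle with $m_{11}$ columns, produce a factor $d^{(m_{33})^{m_{12}}}_{\lambda\sigma'}$ and leave a coproduct component $\sigma'$ of $\sigma$; the constraints on $\tau$ together with $\Delta(\groth_{R_0})$ should similarly produce $d^{(m_{11})^{m_{23}}}_{\tau'\nu}$ and a component $\tau'$; and the two remaining coproducts of $\sigma$ and $R_0$ should glue, by coassociativity of $\Delta$ and the bialgebra compatibility, into a single multiplication constant $c^\mu_{\sigma'\tau'}$, giving $c_{\lambda,\mu,\nu}=\sum_{\sigma',\tau'}d^{(m_{33})^{m_{12}}}_{\lambda\sigma'}\,d^{(m_{11})^{m_{23}}}_{\tau'\nu}\,c^\mu_{\sigma'\tau'}$. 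I expect the bookkeeping of the column-deletion operations against the iterated coproducts of $R_0$ and $\sigma$, and the interplay of the two length constraints with the support conditions on $d^\nu_{\lambda\mu}$ and $c^\nu_{\lambda\mu}$, to be the main obstacle; the only tools needed for it are (\ref{E:factor}), coassociativity, and the vanishing statements for $d$ and $c$ recorded in section~\ref{S:groth}.
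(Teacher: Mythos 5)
Your route is genuinely different from the paper's. The paper never invokes the operator formalism of Section~\ref{S:formula} for this theorem: it computes $\pi_*[\cO_{\cV_{\bi,\br}}]$ directly, writing the class of the iterated zero section over the triple Grassmann bundle as a product of three rectangle classes by Thom--Porteous, and then chooses the \emph{order} of Gysin pushforwards to its advantage --- it first integrates out $\Gr(m_{11},\cE_1)$ and $\Gr(m_{33},\cE_3)$ using Lemma~\ref{L:dualgysin}, so that the subsequent coproduct/product manipulation produces the coefficients $c_{\lambda,\mu,\nu}$ on the nose, and only then pushes down $\Gr(e_2-m_{22},\cE_2)$ via Theorem~\ref{T:gysin}. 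You instead commit to the order dictated by $P^{Q,e}_{\bi,\br}=\Phi_{2,m_{22}}\Phi_{1,m_{12}+m_{13}}\Phi_{3,m_{23}+m_{13}}\Phi_{2,e_2-m_{22}}\Phi_{1,m_{11}}\Phi_{3,m_{33}}(1^{\otimes 3})$. Your evaluation of the six operators is correct, including the reduction to type A rational singularities over $\wb\bk$, the values of the column parameters $c$, and the observation that $\groth_{(-m)^p+\tau}=\groth_{\tau^{\circ m}}$ needs only the convention $\groth_{I,p}=\groth_I$ for $p<0$ and never the recursion (\ref{E:intseq}). Your intermediate formula for $P_\Omega$ as a sum over $\sigma,\tau,\mu,\beta$ with $d^{R_0}_{\sigma\tau}d^\sigma_{\mu\beta}$ is right.

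The genuine gap is the last step: the identity
$\sum d^{R_0}_{\sigma\tau}\,d^{\sigma}_{\mu\beta}
= \sum_{\sigma',\tau'} d^{(m_{33})^{m_{12}}}_{\lambda\sigma'}\,d^{(m_{11})^{m_{23}}}_{\tau'\nu}\,c^\mu_{\sigma'\tau'}$,
where the left sum is over $\beta^{\circ m_{11}}=\lambda$, $\tau^{\circ m_{33}}=\nu$ with the two length constraints, is not a term-by-term match and is not established by your sketch. Already for $m_{23}=m_{33}=1$ and all other $m_{ij}=0$ the left side exhibits cancellation (e.g.\ for $\mu=(1)$, $\nu=\emptyset$ the contributions from $\tau=\emptyset$ and $\tau=(1)$ are $+1$ and $-1$), so this is a real algebraic identity, not bookkeeping; ``should produce'' and ``should glue'' do not close it. The most economical way to finish along your lines is not combinatorial at all: observe that the operations ``restrict to $\ell(\tau)\le p$, delete $m$ columns'' and ``split off $d^{(m)^{p}}_{\cdot\,\cdot}$'' are the two Gysin formulas (Theorem~\ref{T:gysin} versus Lemma~\ref{L:dualgysin}) for the \emph{same} Grassmann bundle pushforward, so both sides of the identity compute $\pi_*[\cO_{\cV_{\bi,\br}}]$ --- one doing the vertex-$2$ bundle last, the other doing it after the vertex-$1$ and $3$ bundles --- and they agree by uniqueness of the expansion (Corollary~\ref{C:univlocus}). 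At that point you have essentially reproduced the paper's proof, which suggests the direct geometric computation is the shorter path.
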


\begin{lemma} \label{L:dualgysin}
  In the situation of Theorem~\ref{T:gysin}, let $\lambda$ be a
  partition such that $\lambda_1 = \lambda_b = s$, where $b =
  \rank(\cB)$.  Then $\rho_*(\groth_\lambda(\rho^* \cB - \cS)) =
  \groth_{(\lambda_{q+1},\lambda_{q+2},\dots)}(\cB - \cF)$.
\end{lemma}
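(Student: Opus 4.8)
The plan is to reduce this to Theorem~\ref{T:gysin} by passing to dual bundles, using the canonical identification of the Grassmann bundle of $s$-planes in $\cF$ with the Grassmann bundle of $q$-planes in $\cF^\vee$. First I would rewrite the class to be pushed forward: by the conjugation identity $\groth_\mu(\cA-\cB)=\groth_{\mu'}(\cB^\vee-\cA^\vee)$ (the vector-bundle version of \cite[Lemma~3.4]{buch:littlewood-richardson}), we have $\groth_\lambda(\rho^*\cB-\cS)=\groth_{\lambda'}(\cS^\vee-\rho^*\cB^\vee)$. The hypothesis $\lambda_1=\lambda_b=s$ is exactly what makes $\lambda'$ a legal input for Theorem~\ref{T:gysin}: it forces $\lambda_1=\cdots=\lambda_b=s$, so (assuming $b=\rank(\cB)\geq1$) the conjugate $\lambda'$ is a partition of length precisely $s$ and every one of its parts is at least $b$.

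Next I would set up the dualized Grassmann bundle. Dualizing the universal sequence $0\to\cS\to\rho^*\cF\to\cQ\to0$ gives $0\to\cQ^\vee\to\rho^*\cF^\vee\to\cS^\vee\to0$, which exhibits $\Gr(s,\cF)=\Gr(q,\cF^\vee)$ over $X$, with universal sub $\cQ^\vee$ of rank $q$ and universal quotient $\cS^\vee$ of rank $s$, the map $\rho$ and the pushforward $\rho_*$ being unchanged. Applying Theorem~\ref{T:gysin} to this Grassmann bundle, with $\cF$ replaced by $\cF^\vee$, with $\cB$ replaced by $\cB^\vee$ (of rank $b$), with $I=\lambda'$ (length $s$, all parts $\geq b$), and with $J$ empty so that $\groth_J=1$, yields
\[ \rho_*\bigl(\groth_{\lambda'}(\cS^\vee-\rho^*\cB^\vee)\bigr)=\groth_{\lambda'-(q^s)}(\cF^\vee-\cB^\vee)\ \in K_\circ(X)\,, \]
whose left-hand side is $\rho_*(\groth_\lambda(\rho^*\cB-\cS))$ by the previous paragraph.

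Finally I would translate the right-hand side back into the desired form. Since $\lambda'$ is weakly decreasing, all negative entries of the integer sequence $\lambda'-(q^s)$ occur at its tail, so the convention $\groth_{K,p}=\groth_K$ for negative $p$ lets me replace $\groth_{\lambda'-(q^s)}$ by $\groth_\mu$, where $\mu$ is the genuine partition formed by the nonnegative parts $\lambda'_k-q$ (those with $\lambda'_k\geq q$). Then $\groth_\mu(\cF^\vee-\cB^\vee)=\groth_{\mu'}(\cB-\cF)$, and counting column heights gives $\mu'_j=\#\{k:\lambda'_k-q\geq j\}=\#\{k:\lambda'_k\geq q+j\}=\lambda_{q+j}$, the last equality because the $m$-th part of a partition equals the number of columns of its diagram of height $\geq m$. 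Hence $\mu'=(\lambda_{q+1},\lambda_{q+2},\dots)$ and the lemma follows. The only points needing care are the bookkeeping of conjugates and of the shift by $(q^s)$, together with checking that the integer-sequence conventions for $\groth$ are applied consistently when $\lambda'-(q^s)$ has negative parts; there is no real geometric obstacle once the passage to $\cF^\vee$ has been made.
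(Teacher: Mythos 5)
Your proposal is correct and follows exactly the paper's argument: identify $\Gr(s,\cF)$ with $\Gr(q,\cF^\vee)$ via the dualized tautological sequence, convert $\groth_\lambda(\rho^*\cB-\cS)$ to $\groth_{\lambda'}(\cS^\vee-\rho^*\cB^\vee)$ by the conjugation identity, and apply Theorem~\ref{T:gysin}. The paper states this in two sentences and leaves the bookkeeping implicit; your write-up just carries out the shift by $(q^s)$, the removal of negative tail entries, and the re-conjugation explicitly, all correctly.
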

\begin{proof}
  The Grassmann bundle $\Gr(s,\cF)$ of $s$-planes in $\cF$ is
  identical to the bundle $\Gr(q,\cF^\vee)$ of $q$-planes in
  $\cF^\vee$, with tautological exact sequence $0 \to \cQ^\vee \to
  \rho^* \cF^\vee \to \cS^\vee \to 0$.  The lemma follows from
  Theorem~\ref{T:gysin} by using the identity $\groth_{\lambda}(\rho^*
  \cB - \cS) = \groth_{\lambda'}(\cS^\vee - \rho^* \cB^\vee)$.
\end{proof}

\begin{proof}[Proof of Theorem \ref{T:inbound}]
  Let $X$ be a smooth variety with vector bundles $\cE_1, \cE_2,
  \cE_3$ of ranks $e_1,e_2,e_3$, and let $\wt\Omega \subset \cV =
  \Hom(\cE_1,\cE_2) \oplus \Hom(\cE_3,\cE_2)$ be the translated
  degeneracy locus.  Form the product of Grassmann bundles
\[ P =
  \Gr(m_{11}, \cE_1) \times_\cV \Gr(e_2-m_{22}, \cE_2) \times_\cV
  \Gr(m_{33}, \cE_3) \xrightarrow{\ \pi\ } \cV
\]
  with tautological subbundles $\cE'_i
  \subset \cE_i$, $1 \leq i \leq 3$.  The desingularization of
  $\wt\Omega$ is the iterated zero section $\cV_{\bi,\br} = Z(\cE'_1
  \oplus \cE'_3 \to \cE'_2) \subset Z(\cE_1 \oplus \cE_3 \to
  \cE_2/\cE'_2) \subset P$.  The Thom-Porteous formula
  (Example~\ref{E:porteous}) implies that the Grothendieck class of
  this locus in $K_\circ(P)$ is given by 
  \[ [\cO_{\cV_{\bi,\br}}] =
    \groth_{(m_{11})^{e_2-m_{22}}}(\cE'_2 - \cE'_1)\,
    \groth_{(m_{33})^{e_2-m_{22}}}(\cE'_2 - \cE'_3)\,
    \groth_{(e_1+e_2)^{m_{22}}}(\cE_2/\cE'_2 - \cE_1\oplus \cE_3) \,.
  \]
  The pushforward of this class along the projection $P \to
  \Gr(e_2-m_{22},\cE_2)$ is equal to
  $\groth_{(m_{11})^{m_{23}}}(\cE'_2 - \cE_1)\,
  \groth_{(m_{33})^{m_{12}}}(\cE'_2 - \cE_3)\,
  \groth_{(e_1+e_3)^{m_{22}}}(\cE_2/\cE'_2 - \cE_1\oplus \cE_3)$ by
  Lemma~\ref{L:dualgysin}.  The first two factors of this product can
  be rewritten as
\[\begin{split} 
&  \groth_{(m_{11})^{m_{23}}}(\cE'_2 - \cE_1)\,
   \groth_{(m_{33})^{m_{12}}}(\cE'_2 - \cE_3) 
\\
&= \sum_{\lambda,\sigma,\tau,\nu}
   d^{(m_{33})^{m_{12}}}_{\lambda,\sigma} \,
   d^{(m_{11})^{m_{23}}}_{\tau,\nu} \, 
   \groth_\lambda(\cE_1)\, \groth_\sigma(\cE'_2 - \cE_1 \oplus \cE_3)\, 
   \groth_\tau(\cE'_2 - \cE_1 \oplus \cE_3)\, \groth_\nu(\cE_3)
\\
&= \sum_{\lambda,\mu,\nu} c_{\lambda,\mu,\nu}\,
   \groth_\lambda(\cE_1)\, \groth_\mu(\cE'_2 - \cE_1 \oplus \cE_3)\,
   \groth_\nu(\cE_3) \,.
\end{split}\]
Theorem~\ref{T:gysin} applied to the bundle 
$\Gr(e_2-m_{22},\cE_2) \to \cV$ therefore shows that
\[\begin{split} \pi_*([\cO_{\cV_{\bi,\br}}]) 
&= \sum_{\lambda,\mu,\nu} c_{\lambda,\mu,\nu}\,
   \groth_\lambda(\cE_1)\,
   \groth_{(m_{11}+m_{13}+m_{33})^{m_{22}},\mu}(\cE_2 - \cE_1\oplus \cE_3)\,
   \groth_\nu(\cE_3)
\end{split}\]
in $K_\circ(\cV)$, as required.
\end{proof}

\subsection{Outbound A$_3$ quiver}

Now let $Q = \{1 \ot 2 \to 3\}$ be the quiver of type A$_3$ with both
arrows pointing away from the center, and let $\Omega \subset V$ be
the orbit closure corresponding to the sequence $(m_{ij}) \in
\N^{\Phi^+}$, where $\Phi^+ = \{ \alpha_{ij} \mid 1 \leq i < j \leq
3\}$.  Let $R = (m_{22})^{m_{13}}$ be the rectangle with $m_{13}$ rows
and $m_{22}$ columns.  For partitions $\lambda, \mu, \nu$, we let
$d^R_{\lambda,\mu,\nu}$ denote the 2-fold coproduct coefficients
defined by $\Delta^2(\groth_R) = \sum_{\lambda,\mu,\nu}
d^R_{\lambda,\mu,\nu}\, \groth_\lambda \otimes \groth_\mu \otimes
\groth_\nu$.

\begin{prop}
  The coefficient $d^R_{\lambda,\mu,\nu}$ is zero unless $\lambda$,
  $\mu$, and $\nu$ are contained in $R$, in which case it is equal to
  $(-1)^{|\lambda|+|\mu|+|\nu|-m_{22}m_{13}}$ times the number of
  triples $(\sigma,\tau,T)$, where $\sigma$ and $\tau$ are partitions
  such that $\sigma \subset \tau \subset R$, and $T$ is a set-valued
  tableau of skew shape $\tau/\sigma$, satisfying the following
  conditions.
  
  \begin{romenum}
  \item
    The Young diagram $\sigma$ is contained in $\lambda$, and
    $\lambda/\sigma$ is a rook-strip.

  \item
    If $\tau$ is placed in the upper-left corner of $R$ and the 180
    degree rotation of $\nu$ is placed in the lower-right corner,
    then their union is $R$ and their overlap is a rook-strip.
    
  \item 
    The word $w(T)$ is a reverse lattice word with content $\mu$.
  \end{romenum}
\end{prop}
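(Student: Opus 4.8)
The plan is to peel apart the double coproduct $\Delta^2(\groth_R)=\sum_{\lambda,\mu,\nu} d^R_{\lambda,\mu,\nu}\,\groth_\lambda\otimes\groth_\mu\otimes\groth_\nu$ into two single applications of $\Delta$ and feed each through the combinatorial rules for the coproduct from \cite{buch:littlewood-richardson}, mirroring the proof of Theorem~\ref{T:inbound}. Since $\Gamma$ is a coassociative, cocommutative bialgebra, $\Delta^2=(\Delta\otimes\id)\circ\Delta$, and extracting the coefficient of $\groth_\lambda\otimes\groth_\mu\otimes\groth_\nu$ yields
\[
 d^R_{\lambda,\mu,\nu}=\sum_\tau d^R_{\tau,\nu}\;d^\tau_{\lambda,\mu},
\]
the sum over all partitions $\tau$.

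Next I would unpack the two factors. Since $R=(m_{22})^{m_{13}}$ is a rectangle, the first factor $d^R_{\tau,\nu}$ is controlled by the rectangle coproduct rule used in the proof of Theorem~\ref{T:inbound} (a quick consequence of the description of $d$ in Section~\ref{S:groth} and Theorem~\ref{T:mult}): it vanishes unless $\tau,\nu\subseteq R$, it lies in $\{0,\pm1\}$, and it equals $(-1)^{|\tau|+|\nu|-m_{22}m_{13}}$ precisely when condition (ii) holds (using cocommutativity, $d^R_{\tau,\nu}=d^R_{\nu,\tau}$, to match the rotation convention). For the second factor I would invoke the combinatorial description of the coproduct of an arbitrary partition from \cite{buch:littlewood-richardson}: $d^\tau_{\lambda,\mu}$ vanishes unless $\lambda,\mu\subseteq\tau$, and then equals $(-1)^{|\lambda|+|\mu|-|\tau|}$ times the number of pairs $(\sigma,T)$ where $\sigma\subseteq\lambda$ with $\lambda/\sigma$ a rook-strip and $T$ is a set-valued tableau of shape $\tau/\sigma$ whose word $w(T)$ is a reverse lattice word of content $\mu$ --- that is, conditions (i) and (iii), with the containment $\lambda\subseteq\tau$ built in. Substituting, the two signs multiply to $(-1)^{|\tau|+|\nu|-m_{22}m_{13}}(-1)^{|\lambda|+|\mu|-|\tau|}=(-1)^{|\lambda|+|\mu|+|\nu|-m_{22}m_{13}}$, which is independent of $\tau$, so there is no cancellation across the sum over $\tau$; therefore $d^R_{\lambda,\mu,\nu}$ is $(-1)^{|\lambda|+|\mu|+|\nu|-m_{22}m_{13}}$ times the total number of triples $(\sigma,\tau,T)$ satisfying (i)--(iii). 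The vanishing claim follows as well: $d^R_{\tau,\nu}\neq0$ forces $\tau,\nu\subseteq R$, and $d^\tau_{\lambda,\mu}\neq0$ forces $\lambda,\mu\subseteq\tau\subseteq R$, so all of $\lambda,\mu,\nu$ lie in $R$ whenever the coefficient is nonzero.

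The step I expect to be the real work is the combinatorial rule for the middle factor $d^\tau_{\lambda,\mu}$ with $\tau$ an arbitrary (non-rectangular) partition, together with the containment bookkeeping needed to identify the counted pairs $(\sigma,T)$ exactly with conditions (i) and (iii) --- in particular the requirement $\lambda\subseteq\tau$, which is automatic from $d^\tau_{\lambda,\mu}\neq0$ but is not formally forced by (i) and (iii) in isolation. If a ready-made statement of this rule is not at hand, it can be derived from the identity $d^\tau_{\lambda,\mu}=c^{(R'+\mu,\lambda)}_{R',\tau}$ for a large rectangle $R'$ and Theorem~\ref{T:mult} by a set-valued Littlewood--Richardson bijection along the lines of \cite[\S5--6]{buch:littlewood-richardson}. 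The remaining ingredients --- coassociativity of $\Delta$, the rectangle rule, and the sign computation --- are routine.
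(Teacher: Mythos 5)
Your proposal is correct and, once unwound, is essentially the paper's argument in a different packaging: the paper quotes Lemma~6.1 of \cite{buch:littlewood-richardson} to write $\Delta^2(\groth_R)$ directly as $\sum (-1)^{|\lambda|+|\tau/\sigma|+|\nu|-|R|}\, \groth_\lambda\otimes\groth_{\tau/\sigma}\otimes\groth_\nu$ over the data of (i) and (ii), and then expands the skew factor $\groth_{\tau/\sigma}$ by Theorem~6.9 of the same reference, which is exactly your condition (iii). Your coassociativity factorization $d^R_{\lambda,\mu,\nu}=\sum_\tau d^R_{\tau\nu}\,d^\tau_{\lambda\mu}$, the rectangle rule for the first factor, and the rook-strip/tableau rule you state for $d^\tau_{\lambda\mu}$ amount to applying that Lemma~6.1 twice (once to $R$, once to $\tau$) followed by Theorem~6.9, so the ``real work'' you anticipate is already available in the reference rather than needing a fresh set-valued LR bijection; your sign bookkeeping and the no-cancellation observation are right. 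One substantive remark: the containment $\lambda\subseteq\tau$ that you flag is not a cosmetic issue --- it must genuinely be imposed on the counted triples, since Lemma~6.1 sums over $\sigma\subseteq\lambda\subseteq\tau$. For instance, with $R=(2,2)$, $\lambda=(1,1)$, $\mu=\emptyset$, $\nu=(2,1)$, the triple $\sigma=\tau=(1)$, $T=\emptyset$ satisfies (i)--(iii) as literally printed but has $\lambda\not\subseteq\tau$, and discarding it is what yields the correct value $d^R_{\lambda,\mu,\nu}=d^R_{(1,1),(2,1)}=-1$ rather than $-2$. Your derivation automatically retains this constraint (those $\tau$ contribute $d^\tau_{\lambda\mu}=0$), so the version of the count you would actually prove --- with $\lambda\subseteq\tau$ included --- is the right one.
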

\begin{proof}
  It follows from \cite[Lemma~6.1]{buch:littlewood-richardson} that
  $\Delta^2(\groth_R) = \sum (-1)^{|\lambda|+ |\tau/\sigma|+
    |\nu|-|R|}\, \groth_\lambda \otimes \groth_{\tau/\sigma} \otimes
  \groth_\nu$, where the sum is over all partitions
  $\lambda,\sigma,\tau,\mu \subset R$ satisfying (i) and (ii).  The
  coefficient of $\groth_\mu$ in $\groth_{\tau/\sigma}$ is equal to
  $(-1)^{|\mu|-|\tau/\sigma|}$ times the number of set-valued tableaux
  $T$ of shape $\tau/\sigma$ satisfying (iii) by
  \cite[Thm.~6.9]{buch:littlewood-richardson}.
\end{proof}

\begin{thm} \label{T:outbound}
The quiver coefficients of the outbound quiver of type A$_3$ are given
by
\[ P_\Omega = \sum_{\lambda,\mu,\nu} d^R_{\lambda,\mu,\nu}\,
   \groth_{(m_{22}+m_{23})^{m_{11}},\lambda} \otimes 
   \groth_\mu \otimes
   \groth_{(m_{22}+m_{12})^{m_{33}},\nu} \,.
\]
\end{thm}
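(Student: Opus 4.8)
\medskip
\noindent\emph{Proof plan.}
The plan is to imitate the proof of Theorem~\ref{T:inbound} with the arrows reversed. Let $X$ be a smooth variety with vector bundles $\cE_1,\cE_2,\cE_3$ of ranks $e_1,e_2,e_3$, let $\cV=\Hom(\cE_2,\cE_1)\oplus\Hom(\cE_2,\cE_3)$ with tautological maps $\phi_1\colon\cE_2\to\cE_1$ and $\phi_3\colon\cE_2\to\cE_3$, and let $\wt\Omega\subseteq\cV$ be the translated degeneracy locus. Reversing the arrows in the directed partition used in Example~\ref{E:inbound} produces the directed partition $\Phi^+=\{\alpha_{11},\alpha_{33}\}\cup\{\alpha_{12},\alpha_{23},\alpha_{13}\}\cup\{\alpha_{22}\}$ for the outbound quiver (one checks $\euler{\alpha,\beta}\geq 0$ inside each block and $\euler{\alpha,\beta}\geq 0\geq\euler{\beta,\alpha}$ across blocks), and the associated resolution pair $\bi=(1,3,2,1,3,2)$, $\br=(m_{11},m_{33},m_{12}+m_{23}+m_{13},m_{12}+m_{13},m_{23}+m_{13},m_{22})$ yields, after discarding the three trivial steps, Reineke's desingularization
\begin{multline*}
\cV_{\bi,\br}=\{(\cS_1,\cS_3,\cS_2,\phi_1,\phi_3)\in P\ :\\
\phi_1(\cE_2)\subseteq\cS_1,\ \phi_3(\cE_2)\subseteq\cS_3,\ \phi_1(\cS_2)=\phi_3(\cS_2)=0\},
\end{multline*}
where $P=\Gr(m_{12}+m_{13},\cE_1)\times_\cV\Gr(m_{23}+m_{13},\cE_3)\times_\cV\Gr(m_{22},\cE_2)\xrightarrow{\ \pi\ }\cV$ carries the tautological subbundles $\cS_i\subseteq\cE_i$ of the indicated ranks. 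By Reineke's theorem $\pi$ desingularizes $\wt\Omega$, and since all orbit closures for a quiver of type A have rational singularities \cite{bobinski.zwara:normality} (see the remark after Corollary~\ref{C:dynkincoef}), $\pi_*[\cO_{\cV_{\bi,\br}}]=[\cO_{\wt\Omega}]$; by Corollary~\ref{C:univlocus} it therefore suffices to compute this pushforward.

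Set $\cQ_i=\cE_i/\cS_i$. Viewing $\cV_{\bi,\br}$ as the nested zero locus $Z(\cS_2\to\cS_1\oplus\cS_3)\subseteq Z(\cE_2\to\cQ_1\oplus\cQ_3)\subseteq P$ of regular sections, the Thom--Porteous formula (Example~\ref{E:porteous}) gives in $K_\circ(P)$
\begin{multline*}
[\cO_{\cV_{\bi,\br}}]=\groth_{(e_2)^{m_{11}}}(\cQ_1-\cE_2)\,\groth_{(e_2)^{m_{33}}}(\cQ_3-\cE_2)\\
{}\cdot\groth_{(m_{22})^{m_{12}+m_{13}}}(\cS_1-\cS_2)\,\groth_{(m_{22})^{m_{23}+m_{13}}}(\cS_3-\cS_2).
\end{multline*}
Using $\cS_i-\cS_2=(\cS_i-\cE_2)+\cQ_2$ and the splitting identity $\groth_\nu(\alpha+\beta)=\sum_{\sigma,\tau}d^\nu_{\sigma\tau}\groth_\sigma(\alpha)\groth_\tau(\beta)$ of~(\ref{E:split}), rewrite the last two factors as $\sum_{\lambda,\gamma}d^{(m_{22})^{m_{12}+m_{13}}}_{\lambda\gamma}\groth_\lambda(\cS_1-\cE_2)\groth_\gamma(\cQ_2)$ and $\sum_{\nu,\delta}d^{(m_{22})^{m_{23}+m_{13}}}_{\nu\delta}\groth_\nu(\cS_3-\cE_2)\groth_\delta(\cQ_2)$. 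Now push forward along the Grassmann bundle of $\cE_1$, and then of $\cE_3$: in each case the factors $\groth_\gamma(\cQ_2)$ and $\groth_\delta(\cQ_2)$ are pulled back from the $\cE_2$-Grassmannian, so Theorem~\ref{T:gysin} with $\cB=\cE_2$ applies to the two remaining factors over each of these bundles and---using $e_2-(m_{12}+m_{13})=m_{22}+m_{23}$ and $e_2-(m_{23}+m_{13})=m_{22}+m_{12}$, exactly as in the inbound computation---produces $\groth_{(m_{22}+m_{23})^{m_{11}},\lambda}(\cE_1-\cE_2)$ and $\groth_{(m_{22}+m_{12})^{m_{33}},\nu}(\cE_3-\cE_2)$. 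We are left, over $\Gr(m_{22},\cE_2)\to\cV$, with
\begin{multline*}
\sum_{\lambda,\nu,\gamma,\delta}d^{(m_{22})^{m_{12}+m_{13}}}_{\lambda\gamma}\,d^{(m_{22})^{m_{23}+m_{13}}}_{\nu\delta}\,\groth_{(m_{22}+m_{23})^{m_{11}},\lambda}(\cE_1-\cE_2)\\
{}\cdot\groth_{(m_{22}+m_{12})^{m_{33}},\nu}(\cE_3-\cE_2)\,\groth_\gamma(\cQ_2)\,\groth_\delta(\cQ_2).
\end{multline*}

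For the last pushforward, combine $\groth_\gamma(\cQ_2)\,\groth_\delta(\cQ_2)=\sum_\epsilon c^\epsilon_{\gamma\delta}\groth_\epsilon(\cQ_2)$ by Theorem~\ref{T:mult} and apply Theorem~\ref{T:gysin} with $\cB=0$, which gives $\pi_*\bigl(\groth_\epsilon(\cQ_2)\bigr)=\groth_{\epsilon-(m_{22})^{e_2-m_{22}}}(\cE_2)$. Comparing the result with the statement of the theorem and with Corollary~\ref{C:univlocus}, and noting that the partitions $((m_{22}+m_{23})^{m_{11}},\lambda)$ (resp.\ $((m_{22}+m_{12})^{m_{33}},\nu)$) are pairwise distinct, the theorem reduces to the combinatorial identity
\begin{multline*}
\sum_{\gamma,\delta,\epsilon}d^{(m_{22})^{m_{12}+m_{13}}}_{\lambda\gamma}\,d^{(m_{22})^{m_{23}+m_{13}}}_{\nu\delta}\,c^\epsilon_{\gamma\delta}\;\groth_{\epsilon-(m_{22})^{e_2-m_{22}}}(\cE_2)\\
=\sum_{\mu}d^{(m_{22})^{m_{13}}}_{\lambda,\mu,\nu}\,\groth_\mu(\cE_2),
\end{multline*}
where $d^{(m_{22})^{m_{13}}}_{\lambda,\mu,\nu}$ is the $2$-fold coproduct coefficient and $(m_{12}+m_{13})+(m_{23}+m_{13})-(e_2-m_{22})=m_{13}$. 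This identity---relating Grothendieck structure constants, coproduct coefficients of rectangles of common width $m_{22}$, and the Gysin ``rectangle-subtraction'' operator $\groth_\epsilon\mapsto\groth_{\epsilon-(m_{22})^{e_2-m_{22}}}$---is the step I expect to be the main obstacle. I would attack it using coassociativity of $\Delta$ and the fact that $\Delta$ is a ring homomorphism, together with the explicit rook-strip description of the coproduct of a rectangular partition (equivalently, the set-valued tableau formulas of the Proposition preceding the theorem and of \cite[\S 6]{buch:littlewood-richardson}): one should be able to present the left-hand side as a suitable contraction of $\Delta(\groth_{(m_{22})^{m_{12}+m_{13}}})\otimes\Delta(\groth_{(m_{22})^{m_{23}+m_{13}}})$ and, after the Gysin reduction, recognize it as $\Delta^2(\groth_{(m_{22})^{m_{13}}})$. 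Granting this, the computed value of $\pi_*[\cO_{\cV_{\bi,\br}}]$ equals $P_\Omega(\cE_1-\cE_2,\cE_2,\cE_3-\cE_2)$ with $P_\Omega$ as in the theorem, completing the proof.
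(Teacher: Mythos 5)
Your geometric setup is sound and follows a genuinely different route from the paper's: you resolve $\wt\Omega$ using the symmetric directed partition $\{\alpha_{11},\alpha_{33}\}\cup\{\alpha_{12},\alpha_{23},\alpha_{13}\}\cup\{\alpha_{22}\}$, whereas the paper uses the asymmetric partition $\{\alpha_{11}\}\cup\{\alpha_{33},\alpha_{23},\alpha_{13}\}\cup\{\alpha_{22},\alpha_{12}\}$ with resolution pair $\bi=(1,2,1,3,2,1)$, leading to a flag bundle $\Fl(m_{12},m_{12}+m_{13};\cE_1)$ and a Grassmannian of rank-$(m_{22}+m_{12})$ subbundles of $\cE_2$. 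The point of that choice is that after one Gysin pushforward the rectangle $R=(m_{22})^{m_{13}}$ appears as a \emph{single} factor $\groth_R(\cE'_1-\cE'_2)$ on an intermediate space, so $\Delta^2(\groth_R)$ enters directly via the three-fold coproduct identity, and the remaining pushforwards merely prepend rectangles to $\lambda$ and $\nu$ via Theorem~\ref{T:gysin} and Lemma~\ref{L:dualgysin}. Your identification of the resolution pair, the Thom--Porteous computation of $[\cO_{\cV_{\bi,\br}}]$ in $K_\circ(P)$, the two applications of Theorem~\ref{T:gysin} over the $\cE_1$- and $\cE_3$-Grassmannians, and the appeal to Corollary~\ref{C:univlocus} at the end are all correct.

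The gap is the final combinatorial identity
\[
\sum_{\gamma,\delta,\epsilon}d^{(m_{22})^{m_{12}+m_{13}}}_{\lambda\gamma}\,d^{(m_{22})^{m_{23}+m_{13}}}_{\nu\delta}\,c^\epsilon_{\gamma\delta}\;\groth_{\epsilon-(m_{22})^{e_2-m_{22}}}\ =\ \sum_{\mu}d^{R}_{\lambda,\mu,\nu}\,\groth_\mu\,,
\]
which you state but do not prove, and which you yourself identify as the main obstacle. This is not a routine consequence of coassociativity: the left-hand side involves the Gysin operator $\groth_\epsilon\mapsto\groth_{\epsilon-(m_{22})^{e_2-m_{22}}}$, which is neither a ring homomorphism nor compatible with $\Delta$ in any evident way (it is defined through the straightening recursion (\ref{E:intseq}) on non-partition sequences), so ``presenting the left-hand side as a contraction of the two coproducts and recognizing $\Delta^2(\groth_R)$ after the Gysin reduction'' is precisely the missing content rather than a reduction of it. The identity is true --- it checks in small cases, and it must hold because your resolution and the paper's both push forward to $[\cO_{\wt\Omega}]$ by rational singularities --- but deducing it that way would be circular. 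As written, your argument proves the theorem only modulo this identity; either supply a combinatorial proof of it, or switch to the paper's resolution pair, for which no such identity is needed.
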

\begin{proof}
  We use the directed partition $\Phi^+ = \{ \alpha_{11} \} \cup \{
  \alpha_{33}, \alpha_{23}, \alpha_{13} \} \cup \{ \alpha_{22},
  \alpha_{12} \}$, and resolution pair $\bi = (1, 2,1,3, 2,1)$ and
  $\br = (m_{11}, m_{23}+m_{13}, m_{13}, e_3, m_{22}+m_{12}, m_{12})$.
  Given a non-singular variety $X$ with vector bundles
  $\cE_1,\cE_2,\cE_3$ of ranks $e_1,e_2,e_3$, form the product $P =
  \Fl(m_{12}, m_{12}+m_{13}; \cE_1) \times_{\cV} \Gr(m_{22}+m_{12},
  \cE_2) \to \cV$, with universal subbundles $\cE''_1 \subset \cE'_1
  \subset \cE_1$ and $\cE'_2 \subset \cE_2$.  The desingularization of
  $\wt\Omega \subset \cV$ corresponding to $(\bi,\br)$ is the iterated
  zero section $\cV_{\bi,\br} = Z(\cE'_2 \to \cE'_1/\cE''_1 \oplus
  \cE_3) \subset Z(\cE_2 \to \cE_1/\cE'_1) \subset P$.  The
  Grothendieck class of this locus in $K_\circ(P)$ is
  \[ [\cO_{\cV_{\bi,\br}}] =
     \groth_{(e_2)^{m_{11}}}(\cE_1/\cE'_1 - \cE_2)\,
     \groth_{(m_{22}+m_{12})^{m_{13}}}(\cE'_1/\cE''_1 - \cE'_2)\,
     \groth_{(m_{22}+m_{12})^{e_3}}(\cE_3 - \cE'_2) \,, 
  \]
  and by
  Theorem~\ref{T:gysin}, the pushforward of this class along the
  projection $P \to P' = \Gr(m_{12}+m_{13}, \cE_1) \times_\cV
  \Gr(m_{22}+m_{12},\cE_2)$ is equal to
  \[ \groth_{(e_2)^{m_{11}}}(\cE_1/\cE'_1 - \cE_2)\, \groth_{R}(\cE'_1 -
     \cE'_2)\, \groth_{(m_{22}+m_{12})^{e_3}}(\cE_3 - \cE'_2)
  \]
  in
  $K_\circ(P')$.  After using the three-fold coproduct identity
  \[ \groth_{R}(\cE'_1 - \cE'_2) = \sum d^R_{\lambda,\mu,\nu}\,
     \groth_\lambda(\cE'_1 - \cE_2)\, \groth_\mu(\cE_2)\,
     \groth_\nu(-\cE'_2) \,,
  \]
  as well as the factorization identity
  \[ \groth_\nu(-\cE'_2)\, \groth_{(m_{22}+m_{12})^{e_3}}(\cE_3 -
     \cE'_2) = \groth_{(m_{22}+m_{12})^{e_3},\nu}(\cE_3 - \cE'_2) \,,
  \]
  it follows from Theorem~\ref{T:gysin} and Lemma~\ref{L:dualgysin}
  that the pushforward of the class in $K_\circ(P')$ along $P' \to
  \cV$ is equal to
\[ \pi_*([\cO_{\cV_{\bi,\br}}]) = \sum_{\lambda,\mu,\nu}
   d^R_{\lambda,\mu,\nu}\, 
   \groth_{(m_{22}+m_{23})^{m_{11}},\lambda}(\cE_1-\cE_2)\,
   \groth_\mu(\cE_2)\,
   \groth_{(m_{22}+m_{12})^{m_{33}},\nu}(\cE_3-\cE_2) \,,
\]
as required.
\end{proof}


\providecommand{\bysame}{\leavevmode\hbox to3em{\hrulefill}\thinspace}
\providecommand{\MR}{\relax\ifhmode\unskip\space\fi MR }
\providecommand{\MRhref}[2]{%
  \href{http://www.ams.org/mathscinet-getitem?mr=#1}{#2}
}
\providecommand{\href}[2]{#2}


\end{document}